\documentclass{article}
\usepackage[utf8]{inputenc}
\usepackage{geometry}
\geometry{left = .85in, right = .85in}
\usepackage{tabularx,array}
\newcolumntype{M}[1]{>{\centering}m{#1}}
\usepackage{graphicx}
\usepackage{amssymb, amsmath, amsthm, mathtools}
\usepackage{epstopdf}
\usepackage{nicefrac}
\usepackage{enumerate}
\usepackage{hyperref, float}
\usepackage{color}
\usepackage{tikz}
\usepackage{pst-all}
\usepackage[alphabetic,abbrev,lite,backrefs]{amsrefs}
\usepackage{wrapfig}

\newtheorem{theorem}{Theorem}[section]

\newtheorem{cor}[theorem]{Corollary}
\newtheorem{lemma}[theorem]{Lemma}

\newtheorem{prop}[theorem]{Proposition}

\newtheorem*{thma}{Theorem A}{\bf}{\it}
\newtheorem*{thmb}{Theorem B}{\bf}{\it}
\newtheorem*{thmc}{Theorem C}{\bf}{\it}

\theoremstyle{definition}
\newtheorem{ex}[theorem]{Example}

\newtheorem{observation}[theorem]{Observation}
\newtheorem{question}[theorem]{Question}



\usepackage{dsfont}
\newcommand{\one}{{\mathds 1\!}} 
\newcommand{\BU}{\mathtt{BU}}
\DeclareMathOperator*{\Li}{\mathtt{Li}}
\DeclareMathOperator*{\Ls}{\mathtt{Ls}}
\newcommand{\Klim}{\operatorname*{K-lim}}
\newcommand{\ddt}{\frac{d}{dt}}

\newcommand{\norm}[1]{\|#1\|}
\newcommand{\bhor}{\partial_h}
\newcommand{\inv}{^{-1}}
\newcommand{\pD}{\text{\scalebox{-1}[1]{$\mathrm{P}$}\!\!$\mathrm{D}$}}

\newcommand{\HR}{H(\mathbb{R})}
\newcommand{\HHR}{H_{2n+1}(\mathbb{R})}
\newcommand{\g}{\mathfrak g}
\newcommand{\lv}{\langle}
\newcommand{\rv}{\rangle}

\newcommand{\Span}{\mathrm{span}}
 
\newcommand{\aff}{\mathrm{aff}}

\usepackage{relsize}











\newcommand{\N}{{\mathbb N}}

\newcommand{\R}{{\mathbb R}}

\newcommand{\Z}{{\mathbb Z}}


\newcommand{\cC}{{\mathcal C}}

\newcommand{\cF}{{\mathcal F}}

\newcommand{\cP}{{\mathcal P}}

\title{A metric boundary theory for Carnot groups}
\author{Nate Fisher}
\date{}

\begin{document}

\maketitle
\large

\begin{abstract}
In this paper, we study characteristics of horofunction boundaries of Carnot groups. In particular, we show that for Carnot groups, i.e., stratified nilpotent Lie groups equipped with certain left-invariant homogeneous metrics, all horofunctions are piecewise-defined using Pansu derivatives. For higher Heisenberg groups and filiform Lie groups, two families which generalize the standard 3-dimensional real Heisenberg group, we study the dimensions and topologies of their horofunction boundaries. In doing so, we find that filiform Lie groups of dimension $n\geq 8$ provide the first-known examples of Carnot groups $G$ whose horofunction boundaries are not of dimension $\dim(G) - 1$.
\end{abstract}


\section{Introduction}

In geometric group theory, geometric topology, and dynamics, a great amount of work has been done to understand boundaries of groups and of metrics spaces. These boundary theories have proven incredibly useful, such as in the proof of Mostow's rigidity theorem about the geometry and topology of hyperbolic manifolds. Boundaries have also been used to understand algebraic splittings of groups and the asymptotic behavior of random walks.

Recently, the horofunction boundary,  a natural boundary arising from metric compactification, has been increasingly studied as a more suitable boundary for groups of mixed curvature. In particular, for nilpotent groups, Poisson boundaries are trivial, and visual boundaries have been shown to have trivial topologies \cite{DM61, azencott, DF-stars}. This work, building upon results of \cite{KN-cc, asympt-horobdry, FNG} among others, focuses on examining the horofunction boundaries of nilpotent Lie groups.

We recall the definition of the horofunction boundary. For a metric space $(X,d)$ with basepoint $x_0 \in X$, we embed $(X,d)$ into the space $\mathcal C(X)$ of continuous real-valued functions on $X$ via the map $\iota: x\mapsto d(x,\cdot) - d(x, x_0)$. We then take the closure under the compact-open topology and define the horofunction boundary to be $\bhor(X, d) := \overline{\iota(X)}\setminus \iota(X)$. See Section~\ref{sec:horobdry} for more detail.

In previous work with Sebastiano Nicolussi Golo, the author found that all horofunctions of the 3-dimensional real Heisenberg group $H(\R)$ when equipped with any sub-Finsler metric were piecewise-linear functions of coordinates coming from the first, or horizontal, layer of the Heisenberg grading. Combining results of \cite{FNG, KN-cc}, we also know that the horofunction boundaries of $\HR$ equipped with polygonal sub-Finsler metrics, which arise as the asymptotic cones of word metrics on $H(\Z)$, or with sub-Riemannian metrics are topologically two-dimensional. That is, the horofunction boundaries are of dimension $\dim\HR -1$. In this paper, we substantially widen the scope of groups under consideration to Carnot groups equipped with what we will call {\em layered sup norms}: homogeneous norms realized as $L^\infty$ combinations of norms on the layers. We ask how previous results generalize to these other Carnot groups:
\begin{enumerate}
\item Are horofunctions always piecewise-linear functions of the coordinates coming from the first layer of the grading? 
\item Is the horofunction boundary of a Carnot group $G$ always of dimension $\dim(G) - 1$?
\end{enumerate}

In this paper, we will say the horofunction boundary has \emph{expected dimension} if its dimension is one less than the dimension of the metric space. Indeed, that is what we see for normed vector spaces \cite{gutierrez, ji-schilling-polyhedral, karlsson-metz, kapovich-leeb-finsler, schilling-thesis, walsh-norm} and Carnot--Carath\'eodory metrics on the Heisenberg group \cite{KN-cc, FNG}.

A nilpotent Lie group $G$ is Carnot if it is stratified, i.e., if its Lie algebra satisfies a grading criterion, and if it is equipped with a left-invariant metric which, like the Euclidean distance, is homogeneous with respect to a family of dilations. See Sections~\ref{sec:gradings} and \ref{subsec:quasi-norms} for a more detailed definitions.
We will consider metrics on these stratified groups derived from homogeneous quasi-norms. These homogeneous quasi-norms are of interest to analysts, have been used to understand the large-scale geometry of nilpotent groups, and appear in the visual boundaries of symmetric spaces \cite{breuillard-loc-cpct-gps, dymarz-fisher-xie, fischer-ruzhansky, guivarch-quasinorm}. Specifically, in this paper we will consider metrics derived from layered sup quasi-norms, defined by taking the max of norms of projections of an element to each layer of the grading of $G$. In particular, we will consider {\em polysmooth} layered sup norms, where on each layer we take a norm which is either polyhedral or smooth, as defined in Section~\ref{subsec:quasi-norms}.

For the class of polysmooth layered sup norms on Carnot groups, we are able to answer the first question from above in the affirmative. 

\begin{thma}
If $G$ is a stratified group equipped with a layered sup norm built of polyhedral or smooth norms, every horofunction is piecewise-linear and expressed only in terms of coordinates of the first layer of the grading.
\end{thma}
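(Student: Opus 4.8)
The plan is to exhibit every horofunction as a one-sided directional (Pansu) derivative of the layered sup norm $N$ at a point of its unit sphere, and then to read off both assertions from the description of $N$ as a maximum over layers. Write the stratification as $\g = V_1\oplus\cdots\oplus V_s$, let $\pi_j\colon G\to V_j$ be the $V_j$-component of $\log$, and write $N = \max_j N_j\circ\pi_j$, where $N_j$ is the degree-one homogeneous norm on the $j$-th layer. Normalizing the basepoint to $e$, a horofunction is a pointwise limit $h(y) = \lim_n h_n(y)$ of $h_n(y) = N(x_n^{-1}y) - N(x_n^{-1})$ along some $x_n$ with $t_n := N(x_n)\to\infty$. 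Since the dilations $\delta_t$ are automorphisms, $N$ is $\delta_t$-homogeneous, and $N$ is symmetric in exponential coordinates ($N(g^{-1}) = N(g)$), I can write $x_n^{-1} = \delta_{t_n}(u_n^{-1})$ with $u_n := \delta_{1/t_n}x_n$ on the unit sphere $\{N = 1\}$, factor $x_n^{-1}y = \delta_{t_n}\bigl(u_n^{-1}\,\delta_{1/t_n}y\bigr)$, and use homogeneity to obtain
\[
h_n(y) = t_n\Bigl[\,N\bigl(u_n^{-1}\,\delta_{1/t_n}y\bigr) - 1\,\Bigr].
\]
By compactness of the sphere I pass to a subsequence with $u_n\to u$, $N(u)=1$.

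The crux is the asymptotic expansion of the inner term. Because $\delta_{1/t_n}y = \exp\!\bigl(t_n^{-1}\pi_1(y) + t_n^{-2}\pi_2(y) + \cdots\bigr)$, only the first-layer part of $y$ appears at order $t_n^{-1}$, while each higher layer carries an extra factor $t_n^{-1}$. The Baker--Campbell--Hausdorff formula then gives $\log\bigl(u_n^{-1}\delta_{1/t_n}y\bigr) = \log(u_n^{-1}) + t_n^{-1}W_n(y) + O(t_n^{-2})$, where $W_n(y)\in\g$ gathers $\pi_1(y)$ together with its iterated brackets against $\log(u_n^{-1})$. The structural point is that $W_n(y)$ is linear in $\pi_1(y)$ and entirely independent of $\pi_2(y),\dots,\pi_s(y)$: the higher layers of $y$ have been dilated away, so that even the bracket contributions landing in $V_2,\dots,V_s$ are linear functions of the horizontal data alone.

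Thus $h_n(y)$ is, up to $o(1)$, the difference quotient of $N$ at $u_n^{-1}$ in the direction $W_n(y)$, and I would identify its limit with the directional derivative
\[
h(y) = N'\bigl(u^{-1};\,W(u,y)\bigr) = \max_{j\in I(u)} N_j'\bigl(\pi_j(u^{-1});\,\pi_j(W(u,y))\bigr),
\]
where $I(u) = \{\,j : N_j(\pi_j(u)) = 1\,\}$ is the set of layers attaining the maximum and $W(u,y) = \lim_n W_n(y)$. The second equality is the elementary rule for differentiating a maximum of finitely many functions: an inactive layer stays strictly below the maximum under small perturbations, so only $j\in I(u)$ contribute. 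Each surviving term is the directional derivative $N_j'(\pi_j(u^{-1});\cdot)$ evaluated on $\pi_j(W(u,y))$, a fixed linear image of $\pi_1(y)$; since such a directional derivative is linear on each face of the layer norm $N_j$, and $I(u)$ is finite, the maximum assembles finitely many linear pieces. Hence $h$ is piecewise-linear and depends on $y$ only through $\pi_1(y)$, as claimed.

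The main obstacle is justifying the single displayed identification, i.e.\ the interchange of the two limits $u_n\to u$ and $t_n\to\infty$: a priori one knows only that $h = \lim_n h_n$ exists, and must show it equals the directional derivative of $N$ at the limiting direction. The difficulty concentrates precisely at points where some layer norm $N_j$ fails to be differentiable and at directions $u$ where the active set $I(\cdot)$ jumps, since there the difference quotients need not converge uniformly. I would control this using that $N$ is locally Lipschitz off the identity, so that the triangle inequality $|h_n(y) - h_n(y')|\le N\bigl((y')^{-1}y\bigr)$ makes $\{h_n\}$ equi-Lipschitz on compact sets; Arzel\`a--Ascoli then promotes the pointwise limit to a locally uniform one and lets me evaluate it against the expansion above. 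It should be stressed that it is this second-order expansion, rather than the triangle inequality alone, that forces the dependence on $\pi_2(y),\dots,\pi_s(y)$ to vanish; the finiteness of the number of layers finally guarantees that the resulting piecewise description has only finitely many pieces.
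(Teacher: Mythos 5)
There is a genuine gap at the step you yourself flag as ``the main obstacle.'' The identification
\[
h(y) \;=\; N'\bigl(u^{-1};\,W(u,y)\bigr)
\]
is simply false in general, and the proposed repair (equi-Lipschitzness plus Arzel\`a--Ascoli) cannot rescue it. The problem is not the mode of convergence: the functions $h_n$ are automatically $1$-Lipschitz, so the pointwise limit is already locally uniform. The problem is that the limit genuinely depends on the \emph{relative rates} at which $u_n\to u$ and $t_n\to\infty$, not only on the limiting direction $u$. Already in the abelian Carnot group $(\R^2,\max(|x_1|,|x_2|))$, take $x_n=(n,\,n-c)$ with $c>0$ fixed. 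Then $u_n\to u=(1,1)$ and $t_n=n\to\infty$, but the horofunction is $h(y)=\max(-y_1,\,-y_2-c)$, whereas the directional derivative of the norm at $u^{-1}$ is $\max(-y_1,-y_2)$; taking instead $x_n=(n,\,n-\sqrt n)$ yields the pure linear piece $h(y)=-y_1$. So the set of horofunctions attached to a single limit direction $u$ consists of all finite and ``infinite'' translates of the principal blow-up, not just the principal blow-up itself, and your displayed identity collapses this whole family to one function. This is exactly the phenomenon the paper's machinery is built to handle: Proposition~\ref{prop5e8c4f94} shows that perturbing the sequences realizes every translate of the principal cone, Proposition~\ref{prop5e8c979c} and Theorem~\ref{piecetogether} show that every blow-up along an arbitrary sequence is such a translate (with the constants $c_Q$ recording the rate information), and the paper's proof of Theorem~A then concludes because translates of piecewise-linear functions of first-layer coordinates are again of that form.

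To be clear about what survives: your BCH expansion (that the order-$t_n^{-1}$ increment $W_n(y)$ is linear in $\pi_1(y)$ and independent of the higher layers of $y$) is correct and is the same structural input the paper extracts from Proposition~\ref{prop:polynomials}, and the final conclusion of the theorem is not threatened, precisely because the translates that actually arise inherit piecewise-linearity and first-layer dependence. But your proof as written does not establish that every limit is such a translate; it asserts an identification that fails whenever the active set jumps at $u$ (equivalently, whenever the norm fails to be \emph{strictly} Pansu differentiable at $u$, which is exactly the interesting case). Closing this gap requires characterizing all blow-ups along non-constant sequences --- the Kuratowski-limit apparatus of Section~\ref{sec:blow-ups} --- or reproving an equivalent statement, and that is the substantive content your argument is missing.
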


To make progress towards answering the second question about the dimension of horofunction boundaries, we considered two infinite families of stratified groups which generalize the 3-dimensional Heisenberg group: 1) higher Heisenberg groups $H_{2n+1}(\R)$ of dimension $2n+1$ and step 2; and 2) filiform groups $L_n$ of the first kind of dimension $n$ and step $n-1$. For higher Heisenberg groups, we found that the horofunction boundary always has expected dimension. For certain exceptional layered sup norms which exhibit special symmetries, typically disjoint parts of the horofunction boundary have nonempty intersection. We will say that these norms have {\em non-separated} boundaries (see Section~\ref{sec:degenerate}), and the topology of the horofunction boundary in these exceptional cases can differ from the typical case.

\begin{thmb} The horofunction boundary of $\HHR$ equipped with a layered sup norm built of polyhedral or smooth norms is $2n$-dimensional. Except for non-separated norms, the horofunction boundary is homeomorphic to a $2n$-dimensional button pillow, that is, a $2n$-dimensional sphere with closed neighborhoods of the north and south poles identified.
\end{thmb}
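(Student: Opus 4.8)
The plan is to compute the entire horofunction boundary explicitly and then recognize its homeomorphism type. First I would fix exponential coordinates $(a,\alpha)\in\R^{2n}\times\R$ on $\HHR$, write the layered sup norm as $N(a,\alpha)=\max\{N_1(a),|\alpha|^{1/2}\}$ (the exponent $1/2$ on the central factor being forced by homogeneity under the dilations), and record that the unit ball is the solid cylinder $\{N_1(a)\le 1\}\times[-1,1]$. Using the step-two group law, the first-layer part of $x^{-1}w$ is $\pi_h(w)-\pi_h(x)$, where $\pi_h$ denotes the projection to the first layer, while the central coordinate picks up a symplectic correction $\omega(\pi_h(x),\pi_h(w))$; hence every function $\iota(x)(w)=N(x^{-1}w)-N(x^{-1})$ is an explicit expression to which I can apply an asymptotic expansion as $x\to\infty$. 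Theorem~A guarantees in advance that each limit is piecewise-linear in the first-layer coordinates, which tells me exactly what form to expect.

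Next I would classify the horofunctions by the asymptotics of a sequence $x_k=(a_k,\alpha_k)$ with $R_k:=N(x_k^{-1})\to\infty$, recorded through the limiting direction on the unit sphere $\partial B$ (the boundary of the cylinder). Three regimes appear. When the horizontal factor dominates one obtains the linear functions $H_u(w)=-\langle\nabla N_1(u),\pi_h(w)\rangle$, depending only on $u=\lim a_k/N_1(a_k)$. When the central factor dominates one obtains the linear functions $Z_v(w)=\pm\tfrac12\omega(v,\pi_h(w))$, the sign recording whether $\alpha_k\to+\infty$ or $-\infty$. Along the edge of the cylinder, where the two factors are balanced, the sub-leading comparison between $N_1(a_k)^2$ and $\alpha_k$ survives as a real parameter: at the exactly balanced rate it gives the genuinely piecewise-linear transition functions
\[
T_{u,+}(w)=\max\{-\langle\nabla N_1(u),\pi_h(w)\rangle,\ \tfrac12\omega(u,\pi_h(w))\},
\]
and letting the rate drift shifts one of the two linear pieces by a constant, sweeping out an arc joining the horizontal function $H_u$ to the central boundary function $Z_u$ through $T_{u,+}$; the negative central direction gives a mirror arc from $H_u$ to $Z_{-u}$.

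The third step is to reassemble these pieces topologically. The central functions form a $2n$-dimensional ball $Z\cong\overline{D}^{2n}$ via the linear injection $v\mapsto\tfrac12\omega(v,\cdot)$, and this single observation already proves the boundary is full-dimensional, of codimension one. The interpolating arcs assemble into two collars $A^{\pm}\cong S^{2n-1}\times[0,1]$ joining the horizontal sphere $H\cong S^{2n-1}$ to the boundary sphere $\partial Z$, with $A^{+}$ attached to $\partial Z$ by $u\mapsto Z_u$ and $A^{-}$ by the antipodal rule $u\mapsto Z_{-u}$. Gluing $A^{+}$ and $A^{-}$ along $H$ yields a band $S^{2n-1}\times[-1,1]$ whose two boundary spheres are both attached to the single sphere $\partial Z$; since a ball capped by such a doubly-attached band is exactly a $2n$-sphere with its north and south polar caps identified, this is the button pillow. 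The antipodal twist between the two attachments is harmless, because the antipodal map of the odd-dimensional sphere $S^{2n-1}$ is isotopic to the identity.

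Finally I would make the identification rigorous and isolate the exceptions. Exhaustiveness follows from the fact that horofunctions are precisely subsequential limits of $\iota(x_k)$ and that, after passing to a subsequence, every sequence falls into one of the regimes above; the assembly then defines a continuous bijection from the compact button-pillow model into $\mathcal C(\HHR)$, which is automatically a homeomorphism onto $\partial_h$. The hardest part will be the transitional analysis: proving that the balanced regime contributes exactly the stated arcs and no further hidden horofunctions, and that the global gluing is genuinely the button pillow rather than some further quotient of it. This is also where the exceptional norms enter, since a layered sup norm is non-separated precisely when a symmetry of $N_1$ relative to $\omega$ forces coincidences among the edge functions (for instance $T_{u,+}=T_{u',-}$, or a horizontal function coinciding with a central one), collapsing part of the seam and changing the homeomorphism type; these I would enumerate and treat case by case.
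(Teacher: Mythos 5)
Your overall strategy is the same one the paper uses: split the directions at infinity into a central-dominant regime, a horizontal-dominant regime, and a balanced regime (in the paper these appear as interior ceiling/floor points, interior wall points, and seam points of the unit sphere, with limits along sequences $x_k\to\infty$ replaced by blow-ups at sphere points -- equivalent by the lemma from Fisher--Nicolussi Golo quoted in Section~2), compute the family of limit functions in each regime, and assemble a $2n$-ball, a $(2n-1)$-sphere, and two collars into the button pillow. Your gluing picture, including the observation that the floor collar attaches to $\partial Z$ antipodally and that the twist is harmless because the antipodal map of the odd sphere $S^{2n-1}$ is isotopic to the identity, is consistent with the paper's assembly, and your remark that the ball $Z$ alone gives full-dimensionality is exactly how codimension~1 is obtained there.

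There are, however, two genuine gaps. First, your formulas presuppose that the first-layer norm is smooth: the horizontal functions $H_u(w)=-\langle\nabla N_1(u),\pi_1(w)\rangle$ and the transition functions built from them use $\nabla N_1(u)$, which does not exist when $\norm{\cdot}_{V_1}$ is polyhedral and $u$ lies in a face of codimension at least~2. The theorem covers polyhedral layer norms, and in that case the horizontal-dominant horofunctions at such $u$ are not single linear functions but multi-parameter families of piecewise-linear functions (Schilling's $h_{E,u}$ with a translation parameter $u\in T(E)^*$); the paper needs Schilling's theorem to know this wall family is still a topological $S^{2n-1}$, and the seam analysis becomes a genuinely harder transversality-plus-dimension-count argument (showing the ceiling direction is independent of the wall subdomains, so each seam point contributes exactly one extra dimension of translates and the seam families over all faces still assemble into $S^{2n-1}\times I$). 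None of this is visible from the smooth picture, so as written your proof establishes the theorem only for smooth $\norm{\cdot}_{V_1}$. Second, the step you yourself flag as hardest -- that the balanced regime contributes exactly the stated arcs and nothing else, and that the quotient is not further collapsed -- is precisely what the paper's blow-up machinery settles: every blow-up at a given point is a (finite or infinite) translate of the principal blow-up, so the collars are exactly the translate families, and disjointness away from the shared boundary spheres is proved by comparing partial functions (e.g.\ the principal blow-ups at $p$ and $-p$ are distinguished by which of them contains $-p$ in the subdomain of its vertical partial function). Without some substitute for this machinery your continuous bijection onto $\bhor\HHR$ is not established. A minor slip: the central functions are $\pm\frac{\lambda^2}{4}\,\omega(v,\cdot)$, not $\pm\frac12\,\omega(v,\cdot)$.
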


On the other hand, for the family of filiform Lie groups, there is a threshold in dimension, and hence in nilpotency class, at which the horofunction boundary no longer has the expected dimension.

\begin{thmc} When equipped with a layered sup norm built of polyhedral or smooth norms, the horofunction boundary of the filiform group $L_n$ (of the first kind) has dimension $n-1$ for $2\leq n \leq 7$ and dimension strictly less than $n-1$ when $n\geq 8$.
\label{thm:thmc}
\end{thmc}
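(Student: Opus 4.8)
The plan is to combine Theorem A with an explicit computation of the functions arising as limits along sequences escaping to infinity, and then to reduce the dimension question to a convex-geometric count that exhibits a threshold at $n=8$. First I would fix exponential coordinates $g=(g_1,\dots,g_n)$ adapted to the grading, so that the first layer is $(g_1,g_2)$ and the coordinate $g_i$ has weight $i-1$ for $i\ge 3$, and write the layered sup norm as $\|g\|=\max\bigl(N_1(g_1,g_2),\,c_3|g_3|^{1/2},\dots,c_n|g_n|^{1/(n-1)}\bigr)$. Given a sequence $x_k\to\infty$ with $\|x_k\|=R_k$, I would blow down, $\delta_{1/R_k}x_k\to\bar\xi$, and expand $\|x_k^{-1}g\|-\|x_k\|$ to first order in $1/R_k$. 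Using the filiform relations $[X_1,X_i]=X_{i+1}$ to compute the left-invariant fields $X_1^L,X_2^L$ (equivalently, the relevant Baker--Campbell--Hausdorff terms), this expansion should confirm, in line with Theorem A, that only $g_1,g_2$ survive and that each horofunction takes the form $h(g)=\max_{i\in S}\bigl(\langle p_i,(g_1,g_2)\rangle+b_i\bigr)$, where $S$ indexes the layers whose norm stays within a bounded gap of $R_k$, the constants $b_i$ record those finite gaps, and $p_i$ is the horizontal gradient of the $i$-th layer norm at $\bar\xi$. The crucial structural output is the explicit shape of $p_i$: its $X_2^L$-component is proportional to $\xi_1^{\,i-2}$, while its $X_1^L$-component is a weight-$(i-2)$ polynomial in the blow-down coordinates whose leading term involves $\xi_{i-1}$. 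This power/moment dependence on $\xi_1$ is what ultimately produces the threshold.

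Next I would stratify the boundary according to the active set $S$, the sign pattern of $\bar\xi$, and which lower coordinates are dominant, near-tied, or strictly subordinate, and compute the dimension of each stratum. On a given stratum the free continuous parameters are the gap constants $b_i$ for the near-tied layers (one normalization being fixed by the basepoint condition $h(x_0)=0$) together with the blow-down coordinates left free (the horizontal drift $\xi_1,\xi_2$ and any strictly subordinate $\xi_i$), which feed into the slopes $p_i$. A layer contributes a genuinely new degree of freedom only if its piece is \emph{active}, i.e.\ if $p_i$ is a vertex of $\mathrm{conv}\{p_j:j\in S\}$, so that it is the unique maximizer on an unbounded region; I would argue that the top-dimensional part of the boundary consists of such Busemann-type functions. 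The dimension of a stratum is then (number of active pieces $-1$) plus the number of independent free blow-down parameters, and the dimension of the whole boundary is the maximum of these; one checks for instance that the Heisenberg stratum ($n=3$) recovers dimension $2$.

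Finally I would carry out the optimization over strata. The upper bound $\dim<n-1$ for $n\ge 8$ is the heart of the matter: because the near-tied layers have their top coordinates pinned in modulus, the slopes $p_i$ are constrained so that their second coordinates are forced to be the powers $\xi_1^{\,i-2}$ while their first coordinates are essentially pinned, and a convex-position (Vandermonde/moment-curve type) estimate bounds how many of the $p_i$ can simultaneously be vertices of their convex hull. I expect this bound to be five, so that all $n-2$ coordinate slopes can be placed in convex position exactly when $n-2\le 5$, i.e.\ $n\le 7$, and the achievable dimension then equals $n-1$ precisely in that range. For the matching lower bound when $n\le 7$ I would exhibit one explicit family—top layer dominant, the layers $3,\dots,n-1$ near-tied, first layer subordinate with free drift—and verify that the parametrization is a full-rank immersion by checking that all slopes are in convex position and that the indicator variations $\partial h/\partial b_i$ together with the slope variations $\partial h/\partial\xi_1,\partial h/\partial\xi_2$ are linearly independent. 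The main obstacle is exactly this convex-geometry step: pinning down the maximal number of simultaneously active (extreme) slopes under the moment constraint, and then ruling out that any exotic stratum—one that trades near-tied layers for strictly subordinate coordinates used as extra drift parameters—can recover the lost dimension when $n\ge 8$.
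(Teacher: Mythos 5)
Your reduction of the $n\geq 8$ upper bound to a convex-position count of the slopes is the heart of your argument, and it is both unproven and, within your own framework, false. First, your activity criterion is not the right one: once the gap constants $b_i$ are treated as free parameters, whether the $i$-th piece of $h=\max_i\bigl(\langle p_i,\cdot\rangle+b_i\bigr)$ is essential is governed by the lifted points $(p_i,b_i)\in\R^3$, not by the slopes alone. Indeed, for any pairwise distinct slopes $p_3,\dots,p_n$ the choice $b_i=-|p_i-v|^2+\min_j|p_j-v|^2$ (a paraboloid lift) makes piece $i$ the strict maximizer near the point $2(p_i-v)$, so \emph{all} $n-2$ pieces are simultaneously active no matter how the slopes sit in the plane. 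Your stratum count (number of active pieces minus one, plus the free drift parameters $\xi_1,\xi_2$) would then give $(n-3)+2=n-1$ for every $n$; that is, your machinery as stated yields the opposite of the claim for $n\geq 8$. So either the gap vectors $(b_i)$ are not freely achievable as limits---a constraint you never establish, and which is exactly where the real content lies---or the bound cannot come from convex position at all. Second, the moment-curve structure you invoke does not exist: by the Baker--Campbell--Hausdorff computation (Tables~\ref{table:coords} and \ref{table:pD} of the paper), the $x_2$-coefficient of the layer-$i$ Pansu derivative is $\frac{1}{\nu_i}\frac{B_{i-2}}{(i-2)!}a_1^{i-2}$, and the odd Bernoulli numbers vanish; hence every odd layer $i\geq 5$ has slope lying on the line where the $x_2$-component is zero, while the even layers alternate in sign. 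The slopes are degenerate, not Vandermonde-generic, and there is no reason ``five'' is the threshold other than that it produces the desired answer.

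The paper's proof works by an entirely different mechanism, and the comparison makes your gap sharp. The paper never bounds the number of active pieces; instead it bounds (i) the set of all blow-ups at a fixed point by the \emph{translates} of the principal blow-up, which for piecewise-linear functions of the two horizontal variables is a family of dimension at most $2$ (not ``number of pieces minus one''), and (ii) the dimension of the family of principal blow-ups over a face with $k$ coordinates pinned at $\pm 1$ by $k+1$, using precisely the structural fact your proposal misses: every $x_2$-coefficient in Table~\ref{table:pD} is a function of $a_1$ alone. Full dimension then forces $n-k\leq k+1$ with $k\leq 3$, i.e.\ $n\leq 2k+1\leq 7$. Likewise, the paper's lower bound for $n\leq 7$ pins at most three coordinates, chosen so that every remaining free coordinate appears in the Pansu derivative of some pinned coordinate function (e.g.\ $a_3=a_5=a_7=1$ for $n=7$); it is not your family in which all of layers $3,\dots,n$ are near-tied---under the paper's translate count that family is only $4$-dimensional once $n\geq 6$, so even your $n\leq 7$ construction stands or falls with the unproved claim that the gap constants are independently realizable. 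Note that your gap-constant count and the paper's translate count flatly contradict each other; adjudicating which gap vectors actually arise as limits of sequences $p_n\to p$, $\epsilon_n\to 0$ is unavoidable for any correct proof, and it is exactly the step your proposal assumes away.
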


Theorem C provides the first known examples of Carnot groups whose horofunction boundaries are not of the expected dimension. This threshold at dimension 8 is curious and begs for further exploration into how the dimension of the boundary depends on the nilpotency class and the structure of the grading.

\subsection*{Acknowledgements}

The author would like to thank Tullia Dymarz for many helpful conversations and general guidance. He would also like to thank Moon Duchin, Anders Karlsson, and the anonymous referee for useful comments and suggestions regarding this work. The author is also grateful to Camilo Ramírez Maluendas for his invitation to visit the Universidad Nacional de Colombia in the spring of 2022, where significant progress was made on this project. This work was supported in part by NSF grant DMS-2230900.

\section{Background and notation}

We will start by surveying some background material on graded Lie groups, homogeneous metrics, and horofunction boundaries. For more details on graded Lie groups, Carnot groups, and homogeneous structures on nilpotent groups, we refer the reader to \cites{fischer-ruzhansky, ledonne-primer}.


\subsection{Graded Lie algebras}
\label{sec:gradings}

Let $G$ be a finite-dimensional Lie group with Lie algebra $\g$ and Lie bracket $[\cdot,\cdot]:\g\times \g\to \g$. We say that $\g$ is \emph{gradable} if admits a {\em grading}, that is, a vector space decomposition of $\g$ into linear subspaces,
$\g =\mathlarger\oplus_{s>0} V_s$
such that all but finitely many of the subspaces $V_s$ are $\{0\}$ and such that 
\[
[V_s,V_r]:= \Span\{[v,w]:v\in V_s, w\in V_r\}\subseteq V_{s+r} \text{ for all } s, r > 0.
\] 

A Lie algebra equipped with a specific grading is called a {\em graded Lie algebra}. We call the subspace $V_s$ the {\em layer of degree $s$}. Note that that a gradable Lie algebra admits infinitely many gradings. Indeed, if $(V_s)_{s>0}$ is a grading of $\g$, then for any $r>0$ we get another grading $(W_s)_{s>0}$, where $W_s = V_{s/r}$. Using this scaling, we could, for example, scale any grading so that the first nonzero layer is the layer of degree 1, allowing for a simpler comparison of the two gradings.
We say that a Lie group is {\em graded} if it is simply connected and its Lie algebra is graded.

On any graded Lie algebra, we can define a family of dilations $\{\delta_t: \g \to \g\mid t>0\}$ where $\delta_t X = t^j X$ for any $X \in V_j$. We note that $\delta_t$ is a Lie algebra morphism---that is, $[\delta_t X, \delta_t Y]= \delta_t[X,Y]$. Additionally, we have $\delta_t\circ \delta_s = \delta_{ts}$.

Since the exponential map is a global diffeomorphism, by using exponential coordinates, we can define these dilations on the associated Lie group as well. See the next subsection for more on exponential coordinates.

We recall that the {\em lower central series} of a Lie algebra is a descending series of Lie subalgebras
\[
\g = \g_1 \supset \g_2 \supset \cdots \supset \g_n \supset \cdots,
\]
where $\g_1 = \g$ and where $\g_n = [\g, \g_{n-1}] = \Span\{[v,w]:v\in \g,\ w\in \g_{n-1}\}$. We say that a Lie algebra is {\em nilpotent of step $s$} if $\g_{s+1} = \{0\}$ while $\g_s \neq \{0\}$. One can show that any gradable Lie algebra is nilpotent for some $s$.

A {\em stratification} of a Lie algebra $\g$ is a particular type of grading of the form
$\g = V_1 \oplus V_2 \oplus \cdots \oplus V_s,$
where $[V_1,V_j]=V_{j+1}$ for $1 \leq j \leq s$, where $V_s \neq \{0\}$, and where $V_{s+1} = \{0\}$. We say that $\g$ is {\em stratifiable} if it admits a stratification, and $\g$ equipped with a stratification is a {\em stratified Lie algebra}. Equivalently, a graded Lie algebra is stratified if the first layer of the grading $V_1$ bracket-generates the entire Lie algebra. We say that a Lie group is {\em stratified} if it is simply connected and its Lie algebra is stratified.

While every gradable Lie algebra, and hence every stratifiable Lie algebra, is nilpotent, the converse is not true.

\begin{ex}{(Nilpotent Lie algebra which is not gradable)}
Consider the Lie algebra of dimension 7 generated by $X_1, \ldots, X_7$ whose only non-trivial bracket relations are
\[
[X_1, X_j] = X_{j+1} \text{ for } j = 2,\ldots 6, \quad [X_2, X_3] = X_6, \quad [X_2,X_4] = [X_5, X_2] = [X_3,X_4] = X_7.
\]
This Lie algebra is nilpotent but admits no grading.
\end{ex}

Additionally, it is worth noting that not every gradable Lie algebra is stratifiable. Indeed, we have the following example in dimension 5.

\begin{ex}{(Gradable Lie algebra which is not stratifiable)}
Consider the nilpotent Lie algebra of dimension 5 with basis $X_1, \ldots, X_5$, where the only non-trivial brackets are given by
\[
[X_1, X_j] = X_{j+1} \text{ for } 2\leq j \leq 4, \quad [X_2,X_3] = X_5.
\]
We can define a grading by putting each basis element in its own layer,
$\g = \langle X_1\rangle \oplus \langle X_2\rangle \oplus \langle X_3\rangle \oplus \langle X_4\rangle \oplus  \langle X_5\rangle.$ 
On the other hand, as an exercise, one can see that this Lie algebra does not admit a stratification.
\end{ex} 


\subsection{Exponential coordinates and the Baker--Campbell--Hausdorff formula}\label{sec:exp-coords}

Let $\g$ be a nilpotent Lie algebra. On $\g$ we can define a group operation via the Baker--Campbell--Hausdorff formula
\begin{align*}
PQ &= \sum_{n=1}^\infty\frac{(-1)^{n-1}}{n} \sum_{\{s_j+r_j>0:j=1\dots n\}}
	\frac{ [P^{r_1}Q^{s_1}P^{r_2}Q^{s_2}  \cdots P^{r_n}Q^{s_n}] }
	{\sum_{j=1}^n (r_j+s_j) \prod_{i=1}^n r_i!s_i!  } \\
	&=P+Q+\frac12[P,Q]+ \frac1{12}([P,[P,Q]]-[Q,[P,Q]]) + \cdots ,
\end{align*}
where 
\[
[P^{r_1}Q^{s_1}P^{r_2}Q^{s_2}  \cdots P^{r_n}Q^{s_n}]
= \underbrace{[P,[P,\dots,}_{r_1\text{ times}}
\underbrace{[Q,[Q,\dots,}_{s_1\text{ times}}
\underbrace{[P,\dots}_{\dots}]\dots]]\dots]] .
\]
The sum in the formula above is  finite  because $\mathfrak g$ is nilpotent.

Using the exponential map $\exp: \g \to G$, which is a global diffeomorphism for nilpotent Lie algebras, we get an identification of the Lie algebra $\g$ and the simply connected nilpotent Lie group $G$. 
Once we fix a basis for the Lie algebra $\g$, through the exponential map we can identify $G$ with $\R^n$ with group multiplication determined by the Baker--Campbell--Hausdorff formula. These coordinates on $G$ are called {\em exponential coordinates of the first kind}.
In these coordinates, we note that $p^{-1}=-p$ for every $p\in G$ and that $e = \exp(0)$ is the neutral element of~$G$.

\subsubsection{Examples}
In this paper, we will look at two infinite families of stratified groups which generalize the 3-dimensional real Heisenberg group $\HR$. In exponential coordinates, we identify $\HR$ with $\R^3$ with group multiplication given by
\[
(x,y,z) (x', y', z') = \left(x + x', y + y', z + z' + \frac12(xy' - yx')\right).
\]

The first family which generalizes $\HR$ is the family of higher Heisenberg groups. We let $\HHR$ denote the real Heisenberg group of dimension $2n+1$ and of step 2. In exponential coordinates, $\HHR \cong \R^{2n+1}$, which we can think of as $\R^n \times \R^n \times \R$, where
\[
(x,y,z) (x', y', z') = \left(x + x', y + y', z + z' + \frac12(x\cdot y' - x'\cdot y)\right),
\]
and where $\cdot$ above represents the standard inner product on $\R^n$. On these Heisenberg groups, the standard stratification is
$\lv X_1, \ldots, X_n, Y_1, \ldots, Y_n\rv \oplus \lv Z\rv,$ 
with the associated family of dilations 
$\delta_t(x,y,z) = (tx, ty, t^2z).$

The second family of stratified groups generalizing $\HR$ is the family of filiform groups. The filiform group of the first kind $L_n$ is an $n$-dimensional stratifiable Lie group of step $n-1$. We can define $L_n$ by first defining its Lie algebra. Let $\mathfrak{l}_n$ be the $n$-dimensional Lie algebra generated by the vectors $X_1, X_2, \ldots, X_n$ where the only nontrivial bracket relations are
$[X_1,X_j] = X_{j+1},$ for $2\leq j \leq n-1.$
This Lie algebra can be stratified as 
$\g = \langle X_1, X_2\rangle \oplus \langle X_3\rangle \oplus \cdots \oplus \langle X_n\rangle.$
Note that the first layer of the stratification bracket-generates the whole Lie algebra. To understand the group multiplication in these filiform Lie groups, we can make use of the Baker--Campbell--Hausdorff formula. We note that $L_3 = \HR$, and for $L_4$, the 4-dimensional, step-3 filiform group, known as the Engel group, we run the computation. For two general vectors $P$ and $Q$ with
\[
P = xX_1 + yX_2 + zX_3 + wX_4, \quad\text{and}\quad Q = x'X_1 + y'X_2 + z'X_3 + w'X_4,
\]
we want to know what the product $PQ$ is. Since $\mathfrak{l}_4$ is nilpotent of step 4, we need only consider terms in the Baker--Campbell--Hausdorff formula with iterated brackets of length 3 or less. Thus we need only compute
\[
PQ = P+Q+\frac12[P,Q]+ \frac1{12}([P,[P,Q]]-[Q,[P,Q]]).
\] 
Using the bracket relations for $\mathfrak l_4$ given above, we find that 
\begin{align*}
PQ = \; &(x+x')X_1 + (y + y')X_2 
+\Bigl( z + z' + \frac12 (xy' - yx')\Bigr)X_3 \\
&+ \Bigl(w + w' + \frac12 (xz' - zx') + \frac1{12} (x-x')(xy'-yx')\Bigr)X_4.
\end{align*}
In exponential coordinates, then, the group multiplication in $L_4$ looks like
\begin{align*}(x,y,z,w)(x',y',z',w') = \Bigl( x+x', y+y'&, z+z'+\frac12(xy' - yx'), \\
& w + w' +\frac12(xz'-zx')+ \frac1{12}(x-x')(xy'-yx')\Bigr).
\end{align*}

We observe that the group multiplication in the first three coordinates is identical to that of the Heisenberg group $L_3$. Indeed, for all $k$, the group multiplication in the first $k$ coordinates of $L_{k+1}$ will be identical to that of $L_k$. Following the same method as above, one could determine the group multiplication for each of the filiform groups $L_n$, the main obstruction being the complicated nature of the Baker--Campbell--Hausdorff formula for Lie algebras of higher nilpotency class.

\subsection{Homogeneous structures on graded groups}

Let $g$ be a graded group with the dilations $\delta_t$ associated to that grading. We will call the exponents of $t$ in the dilations of each coordinate the {\em weights}, and we will denote by $\nu_j$ the dilation weight associated to the coordinate $x_j$ in the group.


A Lie group homomorphism between two graded groups $F:G\to G'$ is {\em homogeneous} if and only if $F\circ\delta_t = \delta_t'\circ F$ for all $t>0$, where $\delta'_t$ denotes the dilations in $G'$. A scalar function $f$ on $G\setminus {e}$ is homogeneous of degree $\nu$ if $f\circ \delta_t = t^\nu f$ for all $t>0$. For example, the coordinate function $x_j$ given by
$x = (x_1, \ldots, x_n) \mapsto x_j$ 
is homogeneous of degree $\nu_j$. For any nonzero multi-index $\alpha = (\alpha_1, \ldots, \alpha_n) \in \N_0^n\setminus \{\vec 0\}$, the function $x^\alpha := x_1^{\alpha_1}\cdots x_n^{\alpha_n}$ is homogeneous of degree $[\alpha]:= \nu_1\alpha_1 + \cdots + \nu_n\alpha_n$, called the {\em homogeneous degree} of the multi-index.

Next we state Proposition 3.1.24 from \cite{fischer-ruzhansky}, which tells us that the group multiplication we get from the Baker--Campbell--Hausdorff formula is polynomial in each coordinate and that the polynomial in the $j$th coordinate must have homogeneous degree equal to $\nu_j$.
\begin{prop}\label{prop:polynomials} (\cite{fischer-ruzhansky} Prop. 3.1.24).
For any $j = 1, \ldots, n,$ we have
\[
(xy)_j = x_j + y_j + \sum_{[\alpha] + [\beta] = \nu_j} c_{j,\alpha,\beta}x^\alpha y^\beta,
\]
where $\alpha$ and $\beta$ are multi-indices in $\N_0^n\setminus\{\vec 0\}$ and $c_{j, \alpha, \beta}$ are real coefficients.
\end{prop}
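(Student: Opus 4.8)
The plan is to combine two structural facts already recorded in this section: the Baker--Campbell--Hausdorff (BCH) formula writes $PQ$ as a \emph{finite} sum of iterated Lie brackets (finite because $\g$ is nilpotent), and the dilations $\delta_t$ are Lie algebra morphisms, satisfying $[\delta_t X,\delta_t Y]=\delta_t[X,Y]$. Polynomiality and the shape of the linear term will come from the first fact; the homogeneity constraint $[\alpha]+[\beta]=\nu_j$ will come from the second.

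First I would fix the basis $X_1,\dots,X_n$ adapted to the grading, write $P=\sum_i x_iX_i$ and $Q=\sum_i y_iX_i$, and expand the BCH formula. Because $\g$ is nilpotent of some step $s$, every iterated bracket of length exceeding $s$ vanishes, so the sum truncates to finitely many terms; each surviving iterated bracket is multilinear in its arguments, so expanding it in the fixed basis yields a polynomial in the $x_i$ and $y_i$. Reading off the $j$-th coordinate gives a polynomial $(xy)_j=F_j(x,y)$. The only length-one contribution is $P+Q$, supplying the $x_j+y_j$ term. Any higher-length bracket vanishes when all of its entries are $P$ (its innermost bracket is $[P,P]=0$) or all are $Q$, so by multilinearity every remaining monomial involves at least one coordinate of $x$ and at least one of $y$; hence in $\sum c_{j,\alpha,\beta}x^\alpha y^\beta$ both $\alpha$ and $\beta$ lie in $\N_0^n\setminus\{\vec 0\}$.

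The heart of the argument is the degree identity. Since $\delta_t$ is linear and preserves brackets, applying it termwise to the BCH expansion (by induction on bracket length) gives $\delta_t(PQ)=(\delta_t P)(\delta_t Q)$; that is, each $\delta_t$ is a group automorphism in these coordinates. Because the $j$-th coordinate of $\delta_t$ scales by $t^{\nu_j}$, this reads $F_j(\delta_t x,\delta_t y)=t^{\nu_j}F_j(x,y)$ for all $t>0$. Now substitute the polynomial expression: a monomial $x^\alpha y^\beta$ evaluated at $(\delta_t x,\delta_t y)$ acquires the factor $t^{[\alpha]+[\beta]}$, while the left-hand side multiplies the whole polynomial by $t^{\nu_j}$. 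Comparing, as an identity in $t$, the coefficient attached to each monomial, every monomial carrying a nonzero coefficient must satisfy $[\alpha]+[\beta]=\nu_j$, which is exactly the claimed homogeneity condition.

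The main obstacle --- really the only step requiring care --- is justifying that $\delta_t$ descends to a group automorphism, i.e.\ that it commutes with the BCH group law. This is where the morphism property is used: linearity lets $\delta_t$ pass through the sums and scalar factors of the formula, and $[\delta_t X,\delta_t Y]=\delta_t[X,Y]$ lets it pass through each iterated bracket by induction on length. Everything else (truncation by nilpotency, multilinearity giving polynomiality) is routine. An alternative that avoids the automorphism language would track homogeneous degrees directly through BCH, using that a bracket of elements drawn from layers $V_{d_1},\dots,V_{d_k}$ lands in $V_{d_1+\cdots+d_k}$ and so only contributes to coordinates of weight $d_1+\cdots+d_k$; but the dilation-invariance route is cleaner and reuses facts already in hand.
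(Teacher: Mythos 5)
Your proof is correct. The paper itself gives no proof of this proposition---it is imported verbatim from the cited reference (\cite{fischer-ruzhansky}, Prop.\ 3.1.24)---and your argument (finite BCH expansion via nilpotency for polynomiality, vanishing of all-$P$ and all-$Q$ brackets to force $\alpha,\beta\in\N_0^n\setminus\{\vec 0\}$, and the fact that the dilations $\delta_t$ are group automorphisms, which forces $[\alpha]+[\beta]=\nu_j$ by comparing coefficients in $t$) is exactly the standard argument used there, relying only on facts the paper already records, namely nilpotency and the Lie-algebra-morphism property $[\delta_t X,\delta_t Y]=\delta_t[X,Y]$.
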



\subsection{Homogeneous distances and quasi-norms}\label{subsec:quasi-norms}

A \emph{homogeneous distance} on $G$ is a distance function $d$ that is left-invariant and 1-homogeneous with respect to dilations, i.e., 
\begin{enumerate}
\item[(i)]
$d(gx,gy)=d(x,y)$ for all $g,x,y\in G$;
\item[(ii)]
$d(\delta_t x,\delta_t y) = t\cdot d(x,y)$ for all $x,y\in G$ and all $t>0$.
\end{enumerate}

When a stratified group $G$ is endowed with a homogeneous distance $d$, we call the metric Lie group $(G,d)$ a \emph{Carnot group}.
Homogeneous distances induce the manifold topology of  $G$, see \cite[Proposition 2.26]{MR3943324}, and are bi-Lipschitz equivalent to each other.
Every homogeneous distance defines a \emph{homogeneous norm}  $d_e(\cdot): G \to [0, \infty)$, where $d_e(p) = d(e, p)$ and $e$ is the identity element of $G$.

Relaxing the definition slightly, we can examine {\em homogeneous quasi-norms} $|\cdot|: G \to [0,\infty)$, continuous non-negative functions satisfying
\begin{enumerate}
\item $|x\inv| = |x|$ for all $x \in G$,
\item $|\delta_t x| = t|x|$ for all $x \in G$ and $t >0$,
\item $|x| = 0$ if and only if $x = e$.
\end{enumerate}

Note that, unlike a norm, a quasi-norm need not satisfy the triangle inequality. For example, on any graded Lie group, we can define the homogeneous $p$-quasi-norms and the homogeneous sup quasi-norm:
\begin{align*}
|(x_1, \ldots, x_n)|_p & := \left(\sum_{j=1}^n |x_j|^{p/\nu_j}\right)^{1/p}, \quad 0<p<\infty,\\
|(x_1, \ldots, x_n)|_\infty & := \max_{1\leq j\leq n}\left\{ |x_j|^{1/\nu_j}\right \}.
\end{align*}

Suppose that $G$ is a graded group. We equip each layer $V_j$ of the grading with an ordinary norm $\norm{\cdot}_{V_j}: V_j \cong \R^k \to [0, \infty)$, and let $\pi_j$ be the projection from $G$ to the coordinates in the $j$th layer of the stratification. We can define a homogeneous quasi-norm on $G$ via
\[
|x| := \max_{1\leq j \leq s} \left\{\norm{\pi_j(x)}_{V_j}^{1/j}\right \}.
\]
We will call this a {\em layered sup quasi-norm}.

To study horofunction boundaries, we will want to consider norms which induce genuine metrics, not quasi-metrics. Fortunately, a lemma of Guivarc'h tells us that as long as we are in a stratified group, we can upgrade this family of layered sup quasi-norms to genuine norms at the cost of introducing some constants.

\begin{lemma}(Guivarc'h, \cite{guivarch-quasinorm} Lemme II.1)\label{guivarch}
For any stratified group, up to rescaling each $\norm{\cdot}_{V_j}$ to a proportional norm $\lambda_j\,\norm{\cdot}_{V_j}$ if necessary, where $\lambda_1 = 1$, and  $0 < \lambda_{j+1}\leq \lambda_j$ for all $j$, the layered sup quasi-norm
\[
\norm{x} = \max_{1\leq j \leq s} \left\{\norm{\pi_j(x)}_{V_j}^{1/j}\right \}
\]
is, in fact, a norm satisfying the triangle inequality and hence induces a metric.
\end{lemma}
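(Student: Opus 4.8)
The plan is to verify the triangle inequality $\norm{xy}\le\norm{x}+\norm{y}$ one layer at a time, using the polynomial structure of the group law from Proposition~\ref{prop:polynomials} together with the homogeneity of the quasi-norm. Writing the rescaled quasi-norm as $\norm{x}=\max_{1\le j\le s}\bigl(\lambda_j\norm{\pi_j(x)}_{V_j}\bigr)^{1/j}$, I first observe that the quasi-norm axioms (symmetry, $1$-homogeneity, positive-definiteness) are unaffected by rescaling the individual layer norms, so only the triangle inequality is at stake. Since $\norm{xy}$ is the maximum over $j$ of $\bigl(\lambda_j\norm{\pi_j(xy)}_{V_j}\bigr)^{1/j}$, it suffices to prove, for each $j$, that
\[
\lambda_j\,\norm{\pi_j(xy)}_{V_j}\le(\norm{x}+\norm{y})^j.
\]

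By Proposition~\ref{prop:polynomials}, $\pi_j(xy)=\pi_j(x)+\pi_j(y)+P_j(x,y)$, where $P_j$ is a sum of monomials $c_{j,\alpha,\beta}\,x^\alpha y^\beta$ with $[\alpha]+[\beta]=j$ and $\alpha,\beta\neq\vec 0$; in particular $[\alpha],[\beta]\le j-1$, so every coordinate appearing in $P_j$ lies in a layer of degree at most $j-1$. For the base case $j=1$ there are no such cross terms, so $\pi_1(xy)=\pi_1(x)+\pi_1(y)$ and the ordinary triangle inequality in $V_1$ gives the estimate with $\lambda_1=1$. For the inductive step, I would bound the cross terms using the defining maximum: from $\norm{\pi_k(x)}_{V_k}\le\lambda_k^{-1}\norm{x}^k$ and norm-equivalence on the finite-dimensional layer $V_k$, each coordinate of $x$ in layer $k$ is $O(\norm{x}^k)$, so a monomial $x^\alpha y^\beta$ with $[\alpha]=a$ and $[\beta]=b$ is bounded by a constant times $\norm{x}^a\norm{y}^b$. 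Collecting terms yields
\[
\norm{P_j(x,y)}_{V_j}\le\sum_{\substack{a+b=j\\a,b\ge 1}}C_{j,a,b}\,\norm{x}^a\norm{y}^b,
\]
where the constants $C_{j,a,b}$ depend only on the coordinate norms and on $\lambda_1,\dots,\lambda_{j-1}$.

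Now I combine the pieces. Using $\lambda_j\norm{\pi_j(x)}_{V_j}\le\norm{x}^j$ and the same bound for $y$ (both immediate from the max definition), and comparing against the binomial expansion $(\norm{x}+\norm{y})^j=\norm{x}^j+\norm{y}^j+\sum_{a+b=j,\,a,b\ge1}\binom{j}{a}\norm{x}^a\norm{y}^b$, the extreme terms $\norm{x}^j$ and $\norm{y}^j$ are already matched, and it remains only to dominate $\lambda_j C_{j,a,b}$ by $\binom{j}{a}$ for each middle term. Since $C_{j,a,b}$ depends only on lower layers, I can choose $\lambda_j$ small enough that $\lambda_j C_{j,a,b}\le\binom{j}{a}$ for all admissible $a,b$. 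Setting $\lambda_j$ to be the minimum of this threshold and $\lambda_{j-1}$ simultaneously secures the required monotonicity $0<\lambda_{j+1}\le\lambda_j$ while only strengthening the inequality. Proceeding inductively from $j=1$ up to $j=s$ fixes all constants and completes the argument.

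The main obstacle is the inductive bookkeeping: one must verify that the cross-term constants $C_{j,a,b}$ genuinely depend only on the already-chosen lower-layer scalings $\lambda_1,\dots,\lambda_{j-1}$, so that the choice of $\lambda_j$ is well-posed, and that shrinking $\lambda_j$ never disturbs the matching of the $\norm{x}^j$ and $\norm{y}^j$ terms. The conceptual crux is the \emph{homogeneity matching}: each cross term in the $j$th layer has homogeneous degree exactly $j$, which is precisely what allows the binomial expansion of $(\norm{x}+\norm{y})^j$ to furnish monomials $\norm{x}^a\norm{y}^b$ that absorb them once $\lambda_j$ is taken sufficiently small.
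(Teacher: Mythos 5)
The paper does not actually prove this lemma: it is imported verbatim as Guivarc'h's Lemme II.1 from \cite{guivarch-quasinorm}, so there is no internal proof to compare against. Your argument is correct, and it is essentially the classical Guivarc'h argument that the citation stands in for: reduce the triangle inequality to the layerwise estimate $\lambda_j\,\norm{\pi_j(xy)}_{V_j}\le(\norm{x}+\norm{y})^j$, use the polynomial group law of Proposition~\ref{prop:polynomials} to isolate cross terms of homogeneous degree exactly $j$ involving only coordinates of weight at most $j-1$, and absorb those terms into the mixed terms of the binomial expansion of $(\norm{x}+\norm{y})^j$ by taking $\lambda_j$ sufficiently small. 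The one genuine subtlety --- whether the constants $C_{j,a,b}$ are well-defined before $\lambda_j$ is chosen, given that $\norm{\cdot}$ itself depends on all the scalings --- you handle correctly: the bounds $\lambda_k\norm{\pi_k(x)}_{V_k}\le\norm{x}^k$ hold by definition of the maximum irrespective of later scalings, and since the layer-$j$ cross terms see only layers $1,\dots,j-1$, the inductive choice of $\lambda_j$ (intersected with $\lambda_j\le\lambda_{j-1}$) is well-posed.
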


We will call these upgraded quasi-norms {\em layered sup norms}, and we will at times suppress the scalars $\lambda_j$, assuming that the ordinary norms $\norm{\cdot}_{V_j}$ have been scaled as necessary. The main focus of this paper will be to study the horofunction boundaries of these norms on families of Carnot groups which generalize the real Heisenberg group $\HR$.
In particular, we will consider {\em polysmooth} layered sup norms where each layer norm $\norm{\cdot}_{V_j}$ is either
\begin{enumerate}
\item {\em polyhedral}, i.e., the unit ball $Q_j$ of $\norm{\cdot}_{V_j}$ is a convex, centrally-symmetric polyhedron, or
\item {\em smooth}, meaning the norm is continuously differentiable on $V_j \setminus \{0\}$ and is strictly convex.
\end{enumerate}

We note that the homogeneous sup quasi-norm $|(x_1, \ldots, x_n)|_\infty = \max_{j}\left\{ |x_j|^{1/\nu_j}\right \}$ can be realized as a layered sup quasi-norm where we let each $\norm{\cdot}_{V_j}$ be the sup norm on that layer. Thus, we can apply Lemma~\ref{guivarch} and upgrade $|\cdot|_\infty$ to a homogeneous norm, which we will call the {\em homogeneous sup norm}.


\subsection{Convex geometry}\label{sec:convex}

As we consider layered sup norms on stratified groups, we will want to make use of terminology and results coming from convex geometry. We refer the reader to \cite{beer-convex}, \cite{rockafellar-convex}, and \cite{schilling-thesis} for more details. 

For a subset $S$ of a vector space, we let $V(S)$ be the smallest vector subspace containing $S$. The affine hull $\aff(S)$ is the smallest affine subset containing $S$. We note that every affine subspace is the translation of a unique vector subspace. We call the unique subspace of which $\aff(S)$ is a translation the {\em space of translations} of $\aff(S)$, and denote it by $T(\aff(S))$.

For a subset $S$, we define the {\em relative interior} of $S$ to be the interior of $S$ within the affine subspace $\aff(S)$. Then we can define the {\em relative boundary} of $S$ as the closure of $S$ minus the relative interior of $S$.

We remind the reader that the unit ball of any normed linear space is a centrally-symmetric convex body with non-empty interior. A {\em supporting hyperplane} of a convex body $Q$ in $\R^k$ is a hyperplane satisfying two properties:
\begin{enumerate}
\item $Q$ is contained in one of the two closed half spaces defined by the hyperplane, and
\item $Q$ has at least one boundary point on the hyperplane.
\end{enumerate}

Throughout this paper we assume that each of the layer norms $\norm{\cdot}_{V_j}$ which define a layered sup norm on $G$ is either
\begin{enumerate}
\item {\em polyhedral}, i.e., the unit ball $Q_j$ of $\norm{\cdot}_{V_j}$ is a convex, centrally-symmetric polyhedron, or
\item {\em smooth}, i.e., the norm is continuously differentiable on $V_j \setminus \{0\}$ and $Q_j$ is strictly convex.
\end{enumerate}

For one of these convex unit balls $Q_j \subseteq \R^k$, a nonempty convex subset $F \subseteq Q_j$ is a {\em face} of $Q_j$ if for any $x \in F$ and every closed interval with $I \subseteq Q_j$ with $x$ in the relative interior of $I$, we have $I \subseteq F$. A face of codimension 1 is called a {\em facet}.

If $Q_j$ is polyhedral, then it is the intersection of finitely many halfspaces, and each face of $Q_j$ can be described as the intersection of one or more supporting hyperplanes.

In the smooth case, we restrict our attention to strictly convex norms so that each point in the boundary of the defining convex body $Q_j$ is an {\em extreme point}. That is, in the smooth case the supporting hyperplane at each point of the boundary of the unit ball $Q_j$ is unique and intersects $Q_j$ only at that point.

For the nonempty convex unit ball $Q_j$, the polar dual $Q_j^\circ$ is a subset of the dual space $(\R^k)^\circ$ defined by:
$Q_j^\circ := \{y \in (\R^k)^\circ \mid \langle y|x\rangle \leq 1\; \forall x \in Q_j\},$ 
where $\langle y | x\rangle$ is the evaluation of $y$ at $x$. 
If $F \subseteq Q_j$ is a proper face of $Q_j$, we define the {\em exposed dual} $F^\circ$ of $F$ to be the set
\[
F^\circ := \{y \in Q^\circ\mid \langle y \mid f \rangle = 1 \; \forall f \in F\}.
\]

For example, consider Figure~\ref{fig:convex}. The unit diamond $Q$ is an example of a convex, yet not strictly convex, body, and it is the unit ball of the taxi-cab metric on $\R^2$. If we let $S = \{v_1,v_2\}$, then
\[ \aff(S) = \{(x,y)\mid y= -x + 1\}, \quad\text{and}\quad T(\aff(S)) = \{(x,y)\mid y = -x\}.
\]
We see that the polar dual of the unit diamond is the unit square, and the exposed dual of $F = \{v_1\}$ is $F^\circ = \{1\} \times [-1,1]$.

\begin{figure}[h]
\centering
\begin{tikzpicture}

\draw (-2,0) -- (2,0);
\draw (0,-2) -- (0, 2);
\fill[gray, opacity=.5] (-1,0) -- (0,1) -- (1,0) -- (0,-1) -- cycle;
\draw[thick] (-1,0) -- (0,1) -- (1,0) -- (0,-1) -- cycle;
\fill[blue] (1,0) circle (0.07) node [below right] {$v_1 = (1,0)$};
\fill (0,1) circle (0.07) node [above left] {$v_2 = (0,1)$};
\node at (-1,-1) {$Q$};

\begin{scope}[xshift = 6cm]
\draw (-2,0) -- (2,0);
\draw (0,-2) -- (0, 2);

\fill[gray, opacity=.5] (-1,1) -- (1,1) -- (1,-1) -- (-1,-1) -- cycle;
\draw[thick] (-1,1) -- (1,1) -- (1,-1) -- (-1,-1) -- cycle;
\draw[line width=2.5, blue] (1,1) -- node [above right]{$v_1^\circ$} (1,-1);
\node at (-1.4,-1.4) {$Q^\circ$};

\end{scope}

\end{tikzpicture}
\caption{Example of convex body, polar dual, and exposed dual.}\label{fig:convex}
\end{figure}

In Figure~\ref{fig:smooth}, we have a smooth norm whose unit ball is an ellipse. Note that every point on the boundary if $Q$ is an extreme point. 
We see that the polar dual of the ellipse is also an ellipse, and the exposed dual of each face (i.e., of each extreme point) is a singleton.

\begin{figure}[h]
\centering
\begin{tikzpicture}

\draw (-2,0) -- (2,0);
\draw (0,-2) -- (0, 2);
\filldraw[black, thick, fill = gray!50] (0,0) ellipse (1.5 and .8);
\fill[blue] (3/2,0) circle (0.07) node [below right] {$v_1 = (\frac32,0)$};
\fill[red] (0,4/5) circle (0.07) node [above left] {$v_2 = (0,\frac45)$};
\node at (-1.2,-1.2) {$Q$};

\begin{scope}[xshift = 6cm]
\draw (-2,0) -- (2,0);
\draw (0,-2) -- (0, 2);
\filldraw[black, thick, fill = gray!50] (0,0) ellipse (2/3 and 5/4);
\fill[blue] (2/3,0) circle (0.07) node [below right] {$v_1^\circ = (\frac23,0)$};
\fill[red] (0,5/4) circle (0.07) node [above left] {$v_2^\circ = (0,\frac54)$};
\node at (-1.1,-1.1) {$Q^\circ$};

\end{scope}

\end{tikzpicture}
\caption{Example of smooth convex body with extreme points and their exposed duals.}\label{fig:smooth}
\end{figure}


\subsection{Horofunction boundary of a metric space}\label{sec:horobdry}
Let $(X,d)$ be a metric space and $\cC(X)$ the space of continuous functions $X\to\R$ endowed with the topology of the uniform convergence on compact sets.
We fix a base point $x_0 \in X$ and define $\iota: X \to \cC(X)$ via 
\[
\iota: x \mapsto d(x, \cdot) - d(x, x_0).\] 

We define the \emph{horofunction compactification} $\overline{X}^h$ and the \emph{horofunction boundary} $\bhor(X)$
\[
\overline{X}^h = \overline{\iota(X)}, \text{  and  } \bhor X := \overline{\iota(X)} \setminus \iota(X).
\]
Under relatively mild conditions, the map $\iota$ is an embedding, i.e., a homeomorphism onto its image, making $\overline{X}^h$ a topological compactification. See \cite{kramer, FNG}.

Unlike other geometric boundary theories, the horofunction compactification, or metric compactification, can be used in very general settings without making assumptions on the geometry of the space. Additionally, the horofunction compactification is always Hausdorff, which, for example, is not necessarily true of the visual boundary in nilpotent groups and other spaces of mixed curvature \cite{FNG}. Finally, isometries of the metric space extend to homeomorphisms of the horofunction boundary, which has proved useful in the study of the dynamics of isometries and the asymptotic behavior of random walks \cite{karlsson-stars, karlsson-ledrappier, karlsson-ledrappier-drift, maher-tiozzo}.

\subsection{Pansu differentiability and horofunctions} \label{sec:Pansudifferentiability}
Let $G$ and $G'$ be two Carnot groups with homogeneous metrics $d$ and $d'$, respectively, and let $\Omega\subseteq G$ open.
A function $f:\Omega\to G'$ is \emph{Pansu differentiable at $p\in\Omega$} if 
there is homogeneous Lie group homomorphism $L$ 
such that
\[
\lim_{x\to p}
	\frac{d'( f(p)^{-1}f(x) , L(p^{-1}x) )}{d(p,x)}
= 0 .
\]
The map $L$ is called \emph{Pansu derivative} of $f$ at $p$ and it is denoted by $\pD f(p)$ or $\pD f|_p$.
A function $f:\Omega\to G'$ is \emph{strictly Pansu differentiable at $p\in\Omega$} if 
there is a homogeneous Lie group homomorphism $L$ 
such that
\[
\lim_{\epsilon\to0}\ \sup \left\{
	\frac{d'( f(y)^{-1}f(x) , L(y^{-1}x) )}{d(x,y)}
	: x,y\in B_d(p,\epsilon),\ x\neq y \right\}
= 0,
\]
where $B_d(p, \epsilon)$ is the open $\epsilon$-ball centered at $p$.
Clearly, in this case $f$ is Pansu differentiable at $p$ and $L=\pD F|_p$.

If we take $d$ to be a homogeneous metric on a graded group $G$ with identity element $e$, we can make use of Lemma 2.3 from \cite{FNG} which states that every horofunction can be realized as a generalized directional derivative of the metric function $d_e(x) = d(e, x)$ at a point on the unit metric sphere $\partial B$. That is,
	if $f\in\bhor(G,d)$, then there is a sequence of pairs $(p_n,\epsilon_n)\in\partial B \times(0,+\infty)$ such that $p_n\to p\in \partial B$, $\epsilon_n\to0$ and
	\begin{equation*}
	f(x) = \lim_{n\to\infty} \frac{d_e(p_n\delta_{\epsilon_n}x)-d_e(p_n)}{\epsilon_n}, 
	\quad\text{locally uniformly in $x\in G$.}
	\end{equation*}
		
	On the other hand, if $(p_n,\epsilon_n)\in G\times(0,+\infty)$ such that $p_n\to p\in\partial B$, $\epsilon_n\to0$ and $f:G\to\R$ is the locally uniform limit $f(x) = \lim_{n\to\infty} \frac{d_e(p_n\delta_{\epsilon_n}x)-d_e(p_n)}{\epsilon_n}$, then $f\in\bhor(G,d)$.
	The horofunction $f$ is limit of the sequence of points
	\begin{equation*}
	\iota(q_n) = \iota(\delta_{1/\epsilon_n} p_n^{-1} ).
	\end{equation*}
	Moreover, if $d_e$ is strictly Pansu differentiable at $p$, then $f=\pD d_e|_p$;
	if $p_n\equiv p$ and $d_e$ is Pansu differentiable at $p$, then $f=\pD d_e|_p$.

This lemma tells us that every horofunction in the boundary of a homogeneous group looks like a directional derivative of the metric at a point on the unit metric sphere. In practice, by describing all possible types of limits at each point on the unit metric sphere, we can give a complete description of the horofunction boundary.

This method of studying the boundary also allows us to associate points on the unit sphere with horofunctions at infinity. Since Carnot groups come equipped with dilations, we can think of points on the unit sphere as a parametrization space for all dilation tracks and, therefore, as a parametrization space of all asymptotic directions. Thus we can give a correspondence between asymptotic directions and points in the horofunction boundary.

\subsection{Blow-ups and horofunctions}\label{sec:blow-ups}
As we observed above, in graded groups equipped with homogeneous distances, there is a connection between horofunctions in the boundary and generalized directional derivatives of the metric norm at points on the unit sphere $\partial B$. Almost everywhere on the unit sphere, the metric norm is Pansu differentiable, and these Pansu derivatives are horofunctions in the horofunction boundary. At points where the sphere is not Pansu differentiable, we need some more tools to study the generalized directional derivatives.

Here we will outline a few of the main definitions and results (without proofs) about {\em blow-ups} as defined in \cite{FNG}, which make use of the Kuratowski-Painlev\'e convergence of closed sets. This brief overview will allow us to state new results below, and we refer the reader to Section 3 of \cite{FNG} for a more detailed exposition.

\newcommand{\CL}{\mathtt{CL}}
Let $(X,d)$ be a locally compact metric space and let $\CL(X)$ be the family of all closed subsets of $X$.
If $x\in X$ and $C\subseteq X$, we set $d(x,C):=\inf\{d(x,y):y\in C\}$.
The {\em Kuratowski limit inferior} of a sequence $\{C_n\}_{n\in\N}\subset\CL(X)$ is defined to be
\begin{align*}
\Li_{n\to \infty} C_n 
&:= \left\{ q \in X : \limsup_{n\to \infty} d(q, C_n) = 0 \right\} \\
&= \left\{q\in X: \forall n\in\N\;\exists x_n\in C_n\text{ s.t.~}\lim_{n\to\infty}x_n=q \right\},
\end{align*}
while the {\em Kuratowski limit superior} is defined to be
\begin{align*}
\Ls_{n\to \infty} C_n 
&:= \left\{ q \in X : \liminf_{n\to \infty} d(q, C_n) = 0\right\} \\
&= \left\{q\in X: \exists N\subset\N\text{ infinite }\forall k\in N\;\exists x_{k}\in C_{k}\text{ s.t.~}\lim_{k\to\infty} x_{k}=q \right\}.
\end{align*}
From the definitions, one can see that $\Li_n C_n \subseteq \Ls_n C_n$ and that they are both closed.

If $\Li C_n = \Ls C_n = C$, then we say that the $C$ is the {\em Kuratowski limit} of $\{C_n\}_n$ and we write
\[
C=\Klim_{n\to\infty} C_n .
\]

Let $G$ be a homogeneous group with distance function $d$.
If $\Omega\subseteq G$ is closed, $\{p_n\}_{n\in\N}\subseteq G$ and $\{\epsilon_n\}_{n\in\N}\subset(0,+\infty)$ are sequences, we define the \emph{blow-up set}
\[
\BU(\Omega,\{p_n\}_n,\{\epsilon_n\}_n) 
:= \Klim_{n\to \infty} \delta_{1/\epsilon_n}(p_n\inv \Omega) ,
\]
if it exists. If the sequence $\{p_n\}_n$ is identically equal to the point $p$ and if the limit exists, we call this the {\em principal blow-up} at $p$.

\begin{prop}\label{prop03241537}{\cite[Proposition 3.5]{FNG}}
	Let $\Omega\subseteq G$ be a nonempty closed set. In the particular case that $p_n\equiv p$ and $\{\epsilon_n\}_{n\in\N}\subset(0,+\infty)$ with $\epsilon_n\to 0$, we have
	\begin{enumerate}
	\item
	If $p\notin\Omega$, then $\BU(\Omega,\{p_n\}_n,\{\epsilon_n\}_n)=\emptyset$.
	\item
	If $p\in \Omega^\circ$, the interior of $\Omega$, then $\BU(\Omega,\{p_n\}_n,\{\epsilon_n\}_n)= G$.
	\end{enumerate}
\end{prop}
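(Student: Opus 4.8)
The plan is to reduce both statements to two elementary identities that govern how a homogeneous distance interacts with left-translation and with dilation, and then to track the distance from the identity $e$ to the translated-and-dilated sets $C_n := \delta_{1/\epsilon_n}(p\inv\Omega)$. First I would record the two facts I need. By left-invariance (property (i)), $d(e, p\inv\omega) = d(p, \omega)$ for every $\omega \in \Omega$, so that $d(e, p\inv\Omega) = d(p, \Omega)$. By homogeneity (property (ii)), since every dilation fixes the identity, $d(e, \delta_t x) = d(\delta_t e, \delta_t x) = t\, d(e, x)$ for all $t > 0$ and $x \in G$. Combining these gives the master computation $d(e, C_n) = \tfrac{1}{\epsilon_n}\, d(p, \Omega)$.

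For part (1), the hypotheses that $\Omega$ is closed and $p \notin \Omega$ give $r := d(p, \Omega) > 0$. The master computation then yields $d(e, C_n) = r/\epsilon_n \to +\infty$ as $\epsilon_n \to 0$. For an arbitrary fixed $q \in G$, the triangle inequality gives $d(q, C_n) \geq d(e, C_n) - d(e, q) \to +\infty$, so $\liminf_{n} d(q, C_n) = +\infty > 0$ and hence $q \notin \Ls_n C_n$. As $q$ was arbitrary, $\Ls_n C_n = \emptyset$; since $\Li_n C_n \subseteq \Ls_n C_n$, the Kuratowski limit exists and equals $\emptyset$.

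For part (2), since $p$ lies in the interior $\Omega^\circ$ there is $\rho > 0$ with $B_d(p, \rho) \subseteq \Omega$. Left-invariance sends this ball to $p\inv B_d(p, \rho) = B_d(e, \rho) \subseteq p\inv\Omega$, and homogeneity dilates it to $\delta_{1/\epsilon_n} B_d(e, \rho) = B_d(e, \rho/\epsilon_n) \subseteq C_n$. Since $\rho/\epsilon_n \to +\infty$, any fixed $q \in G$ satisfies $q \in B_d(e, \rho/\epsilon_n) \subseteq C_n$ for all sufficiently large $n$, so $q \in \Li_n C_n$. As $q$ was arbitrary, $\Li_n C_n = G$; since trivially $\Ls_n C_n \subseteq G$, the Kuratowski limit exists and equals $G$.

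I expect no serious obstacle: the argument is bookkeeping with the two defining properties of a homogeneous distance together with the fact that dilations fix $e$ and scale distances linearly. The only points needing care are invoking closedness of $\Omega$ in part (1) to secure the strict inequality $d(p, \Omega) > 0$ (not merely $\geq 0$), and verifying in each part that the Kuratowski limit genuinely exists by pairing the computed bound with the automatic inclusion $\Li_n C_n \subseteq \Ls_n C_n$, rather than computing only one of the two Kuratowski envelopes.
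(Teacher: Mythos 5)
Your proof is correct. Note that the paper itself states Proposition~\ref{prop03241537} without proof, quoting it from \cite{FNG}, so there is no in-paper argument to compare against; your reduction via the identity $d(e,\delta_{1/\epsilon_n}(p^{-1}\Omega))=d(p,\Omega)/\epsilon_n$ (using left-invariance, homogeneity, and $\delta_t e=e$) is the natural self-contained argument, and both Kuratowski envelopes are handled properly. The only implicit ingredient worth flagging is in part (2): passing from ``$p\in\Omega^\circ$'' (interior in the topology of $G$) to the existence of a metric ball $B_d(p,\rho)\subseteq\Omega$ uses that homogeneous distances induce the manifold topology, which the paper records in Section~\ref{subsec:quasi-norms}, so your step is justified.
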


One takeaway from this proposition is that the interesting examples occur when $p$ is in the boundary of $\Omega$. For example, let $\Omega$ be the closed unit square $\Omega = [-1,1]\times[-1,1] \subseteq \R^2$, and let $p = (1,0) \in \partial \Omega$. Then we have:
\begin{enumerate}
\item If $p_n \equiv p$ and $\epsilon_n \to 0$, then $\BU(\Omega,\{p\}_n,\{\epsilon_n\}_n) = \{(x,y) \mid x\leq 0\}$.
\item If $p_n = \left(1 + \frac cn,0\right)$ for $c \in \R$ and $\epsilon_n =\frac1n$, then $\BU(\Omega,\{p_n\}_n,\{\epsilon_n\}_n) = \{(x,y) \mid x\leq -c\}$.
\item If $p_n = \left(1 + \frac1n,0\right)$ and $\epsilon_n =\frac1{n^2}$, then $\BU(\Omega,\{p_n\}_n,\{\epsilon_n\}_n) = \emptyset$.
\end{enumerate}

Often times our closed regions $\Omega$ will be defined, at least locally, as the sublevel sets of smooth functions $F_j$. In this case, blow-ups are related to the Pansu derivatives of the region-defining functions $F_j$.

\begin{prop}\label{prop5e8c4f94}{\cite[Proposition 3.7]{FNG}}
	Let $\Omega\subseteq G$ be a nonempty closed set and $p\in\partial\Omega$.
	Suppose that there exists a neighborhood $U$ of $p$ and a finite family of smooth functions $F_j:U\to\R$ with $j\in J$ such that $\Omega\cap U = \bigcap_{j\in J}\{F_j\le0\}$ and $F_j(p)=0$.
	Suppose also the Pansu derivatives $\{\pD F_j|_p\}_{j\in J}$ are linearly independent.
	
	Then the principal blow-up is given by 
	\[ \BU(\Omega,\{p\}_n,\{\epsilon_n\}_n) = \{x\in G: \pD F_j|_p(x)\le 0,\ j\in J\}.   \]
	
	Also, for every $(t_j)_j\in(\R\cup \{+\infty\})^J$ and every $\epsilon_n\to 0^+$, there are $p_n\to p$  such that 
	\begin{equation}\label{eq5f75bda1}
	\BU(\Omega,\{p_n\}_n,\{\epsilon_n\}_n)
	= \{x\in G: \pD F_j|_p(x)\le t_j,\ j\in J\} .
	\end{equation}
\end{prop}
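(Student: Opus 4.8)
The plan is to reduce everything to the first-order Pansu expansion of the region-defining functions and then read off each blow-up as a sublevel set of the limiting linear functionals, using the linear-independence hypothesis exactly where a convex-geometric nondegeneracy is needed. Since the target group is $\R$, each $L_j:=\pD F_j|_p$ is a homogeneous homomorphism $G\to\R$; being a homomorphism into an abelian group it factors through $G/[G,G]$, so it is a linear functional supported on the horizontal layer $V_1$, satisfying $L_j(xy)=L_j(x)+L_j(y)$ and $L_j(\delta_t x)=t\,L_j(x)$. Writing $F_j(x)=L_j(p^{-1}x)+R_j(x)$ with $|R_j(x)|/d(p,x)\to0$ as $x\to p$ (here I use $F_j(p)=0$), I substitute $x=p\,\delta_\epsilon(y)$. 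Left-invariance and $1$-homogeneity of $d$ give $d(p,p\,\delta_\epsilon y)=\epsilon\, d_e(y)$, while homogeneity of $L_j$ gives $L_j(\delta_\epsilon y)=\epsilon L_j(y)$, so
\[ \frac1\epsilon F_j(p\,\delta_\epsilon y) = L_j(y) + \frac{R_j(p\,\delta_\epsilon y)}{d(p,p\,\delta_\epsilon y)}\,d_e(y). \]
The key first observation is that this convergence is \emph{locally uniform} in $y$: on a compact $K$ one has $d(p,p\,\delta_\epsilon y)\le\epsilon\max_K d_e$, so the error is bounded by $\omega_j(\epsilon\max_K d_e)\max_K d_e$ where $\omega_j(r):=\sup_{0<d(p,x)\le r}|R_j(x)|/d(p,x)\to0$. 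Hence $g_{j,\epsilon}:=\tfrac1\epsilon F_j(p\,\delta_\epsilon\cdot)\to L_j$ uniformly on compacts, and since $p\,\delta_{\epsilon_n}y\to p$ for $y$ in any compact set, only the local description $\Omega\cap U=\bigcap_j\{F_j\le0\}$ is relevant: for large $n$ the rescaled set meets $K$ in $\{y\in K: g_{j,\epsilon_n}(y)\le0\ \forall j\}$.

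For the principal blow-up I compute the two Kuratowski bounds. If $q\in\Ls_n\delta_{1/\epsilon_n}(p^{-1}\Omega)$, there are $y_k\to q$ along a subsequence with $g_{j,\epsilon_k}(y_k)\le0$, and local uniform convergence forces $L_j(q)\le0$, giving $\Ls\subseteq\{L_j\le0\ \forall j\}$. Conversely, if $L_j(q)<0$ for all $j$ then $g_{j,\epsilon_n}(q)\to L_j(q)<0$, so the constant sequence $y_n=q$ eventually lies in the rescaled sets and $q\in\Li$. Linear independence enters here through the convex-geometry fact that for linearly independent functionals the open cone $\{L_j<0\ \forall j\}$ is nonempty and dense in the closed cone $\{L_j\le0\ \forall j\}$; as $\Li$ is closed it then contains the whole closed cone. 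Combining, $\Li=\Ls=\{x:L_j(x)\le0\ \forall j\}$, as asserted.

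For finite thresholds, linear independence makes the evaluation map $V_1\to\R^{J}$, $a\mapsto(L_j(a))_j$, surjective, so I pick a horizontal $a$ with $L_j(a)=-t_j$ for all $j$ and set $p_n:=p\,\delta_{\epsilon_n}(a)$. Then $p_n\to p$, and since $\delta_{\epsilon_n}$ is a homomorphism, $p_n\delta_{\epsilon_n}y=p\,\delta_{\epsilon_n}(ay)$, whence $\tfrac1{\epsilon_n}F_j(p_n\delta_{\epsilon_n}y)\to L_j(ay)=L_j(y)-t_j$ locally uniformly; repeating the $\Li/\Ls$ argument (the subfamily being still independent) yields $\BU=\{x:L_j(x)\le t_j\}$.

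The genuinely delicate case, and the one I expect to be the main obstacle, is $t_j=+\infty$, where the $j$-th constraint must disappear in the limit. Writing $J_\infty=\{j:t_j=+\infty\}$, I choose horizontal $b_n$ with $L_j(b_n)=-t_j$ for $j\notin J_\infty$ and $L_j(b_n)=-c_n$ for $j\in J_\infty$, with $c_n\to+\infty$, and set $p_n:=p\,\delta_{\epsilon_n}(b_n)$. Then $p_n\delta_{\epsilon_n}y=p\,\delta_{\epsilon_n}(b_n y)$ and
\[ \frac1{\epsilon_n}F_j(p_n\delta_{\epsilon_n}y)=L_j(b_n y)+\frac{R_j(p\,\delta_{\epsilon_n}(b_n y))}{d(p,p\,\delta_{\epsilon_n}(b_n y))}\,d_e(b_n y), \]
which for $j\notin J_\infty$ should tend to $L_j(y)-t_j$ and for $j\in J_\infty$ equals $L_j(y)-c_n$ plus error, forced to $-\infty$ so the constraint becomes vacuous on every compact set. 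The obstruction is precisely the error term: on a compact $K$ it is controlled by $\omega_j(\epsilon_n(c_n+C_K))(c_n+C_K)$ with $|b_n|\asymp c_n$, so growing $c_n$ fights against the smallness of $\omega_j$. The resolution is a slow-growth choice of $c_n$: since $\epsilon_n\to0$ and each $\omega_j(r)\to0$ as $r\to0^+$, one can arrange $c_n\to\infty$ with both $\epsilon_n c_n\to0$ and $\omega_j(\epsilon_n c_n)c_n\to0$ (for each level $k$ take a radius past which $\omega_j\le k^{-2}$, and let $c_n$ climb through these levels slowly enough relative to $\epsilon_n$). With such $c_n$ we get $p_n\to p$, vanishing errors, the $J_\infty$-constraints escaping to $-\infty$, and the remaining $\Li/\Ls$ computation giving $\BU(\Omega,\{p_n\}_n,\{\epsilon_n\}_n)=\{x:L_j(x)\le t_j,\ j\in J\}$, which completes the proof.
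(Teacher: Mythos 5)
There is no in-paper proof to compare against here: the paper states this proposition, like everything else in Section~\ref{sec:blow-ups}, explicitly ``without proofs'' as background recalled from \cite{FNG}, so the relevant benchmark is Section 3 of that reference rather than anything in the present text. Your argument is correct and self-contained, and it supplies exactly what the citation elides, in the spirit suggested by the paper's framing (Pansu derivatives plus Kuratowski limits). The skeleton is sound: each $\pD F_j|_p$ is a homogeneous homomorphism into $\R$, hence a linear functional $L_j$ in the horizontal coordinates; Pansu differentiability at $p$ (automatic for smooth $F_j$) together with left-invariance and homogeneity of the metric upgrades the pointwise expansion to locally uniform convergence of $\tfrac{1}{\epsilon_n}F_j(p_n\delta_{\epsilon_n}\cdot)$, which is what lets you localize to $U$ and compute the blow-ups as Kuratowski limits. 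You also invoke linear independence in precisely the two places it is genuinely needed: density of the open cone $\{L_j<0\ \forall j\}$ in the closed cone $\{L_j\le 0\ \forall j\}$, which closes the chain from the open cone through $\Li$ and $\Ls$ to the closed cone, and surjectivity of $a\mapsto(L_j(a))_j$ on $V_1$, which realizes arbitrary finite translations via $p_n=p\,\delta_{\epsilon_n}(a)$ using the automorphism identity $p_n\delta_{\epsilon_n}y=p\,\delta_{\epsilon_n}(ay)$. Your handling of the delicate $t_j=+\infty$ case---choosing $c_n\to\infty$ slowly enough that $\epsilon_n c_n\to 0$ and $\omega_j(\epsilon_n c_n)\,c_n\to 0$---is a valid diagonal argument and correctly renders those constraints vacuous on every compact set. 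The only point worth polishing is bookkeeping: on a compact $K$ the error is controlled by $\omega_j\bigl(\epsilon_n(C c_n+C_K)\bigr)(C c_n+C_K)$ rather than by $\omega_j(\epsilon_n c_n)c_n$ itself, so the slow-growth condition should be imposed with the constant $C$ (coming from $d_e(b_n)\asymp c_n$) and the additive $C_K$ absorbed; the same diagonal choice accommodates this and the conclusion is unaffected.
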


That is, the cone $\cap_{j\in J} \{\pD F_j\vert_p(x) \leq 0\}$ is the principal blow-up at $p$, and by perturbing the sequences $\{p_n\}_n$ and $\{\epsilon_n\}_n$, we can realize any finite or infinite translate of this cone as a blow-up at $p$.

For example, let $p = (1,-1) \in \Omega = [-1,1]\times[-1,1] \subseteq \R^2$. Then $\Omega$ is defined locally near $p$ by $x -1 \leq 0$ and $-y - 1 \leq 0$, and the principal blow-up at $p$ is
\[\BU(\Omega,\{p\}_n,\{\epsilon_n\}_n) = \{(x,y) \mid x\leq 0, y \geq 0\}.\]
Proposition~\ref{prop5e8c4f94} tells us that for any translate of this cone, for example, $\{x\leq 5, y \geq 1\}$ or $\{y \geq 4\}$, there are sequences $p_n \to p$ and $\epsilon_n \to 0$ whose blow-ups give that translated cone.

\subsubsection{Blow-ups of functions}

If, for all $n\in\N$, $\Omega_n\subseteq X$ are closed sets and $f_n:\Omega_n\to\R$ are continuous functions, then we say that, for some $\Omega\subseteq X$ closed and $f:\Omega\to\R$ continuous,
\[
\Klim_{n\to\infty}(\Omega_n,f_n) = (\Omega,f)
\]
if $\Omega=\Klim_n\Omega_n$ and if, for every $x\in\Omega$ and every sequence $\{x_n\}_{n\in\N}$ with $x_n\in\Omega_n$ and $x_n\to x$, we have $f(x)=\lim_nf_n(x_n)$.
Notice that this is equivalent to saying that 
\[
\Klim_{n\to\infty} \{(x,f_n(x)):x\in\Omega_n\} = \{(x,f(x)):x\in\Omega\} .
\]

For a continuous function $f:\Omega\to\R$, we define
\[
\BU((\Omega,f),\{p_n\}_n,\{\epsilon_n\}_n) 
:= \Klim_{n\to \infty} \left( \delta_{1/\epsilon_n}(p_n\inv \Omega) , \frac{ f(p_n\delta_{\epsilon_n}\cdot)-f(p_n) }{ \epsilon_n } \right) .
\]
As before, if $p_n \equiv p$ and if this limit exists, we call this the {\em principal blow-up} of the function $f$ on $\Omega$ at $p$.

\begin{prop}\label{prop5e8c979c}{\cite[Proposition 3.8]{FNG}}
	Let $\Omega\subseteq G$ be a nonempty closed set, $\{p_n\}_{n\in\N}\subseteq G$ and $\{\epsilon_n\}_{n\in\N}\subset(0,+\infty)$ sequences with $p_n\to p\in\Omega$ and $\epsilon_n\to 0$.
	Suppose that $\Omega_0:=\BU(\Omega,\{p_n\}_n,\{\epsilon_n\}_n)$ exists.
	Let $f: G\to\R$ be a continuous function that is strictly Pansu differentiable at $p$.
	Then
	\[
	\BU((\Omega,f),\{p_n\}_n,\{\epsilon_n\}_n) 
	= ( \Omega_0 , \pD f(p)|_{\Omega_0} ) .
	\]
\end{prop}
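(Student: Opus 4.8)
The plan is to unwind the definition of the Kuratowski limit of pairs $(\Omega_n, f_n)$, where $\Omega_n := \delta_{1/\epsilon_n}(p_n\inv\Omega)$ and $f_n(x) := \bigl(f(p_n\delta_{\epsilon_n}x) - f(p_n)\bigr)/\epsilon_n$. By hypothesis $\Omega_0 = \BU(\Omega,\{p_n\}_n,\{\epsilon_n\}_n)$ exists, so the set component $\Klim_n \Omega_n = \Omega_0$ is already identified. It therefore remains to verify the function component: for every $x \in \Omega_0$ and every sequence $x_n \in \Omega_n$ with $x_n \to x$, I must show that $f_n(x_n) \to \pD f(p)(x)$. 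This is exactly the condition appearing in the definition of $\Klim_n(\Omega_n, f_n) = (\Omega_0, \pD f(p)|_{\Omega_0})$, so the whole proof reduces to establishing this one limit.

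Write $L := \pD f(p)$, a homogeneous homomorphism $G \to \R$, so that $L(\delta_t z) = tL(z)$ and $L(zw) = L(z) + L(w)$. The key algebraic observation I would use is that $p_n\inv(p_n\delta_{\epsilon_n}x_n) = \delta_{\epsilon_n}x_n$, whence $L\bigl(p_n\inv(p_n\delta_{\epsilon_n}x_n)\bigr) = \epsilon_n L(x_n)$. Setting $a_n := p_n\delta_{\epsilon_n}x_n$ and $b_n := p_n$, I add and subtract this term to split
\[
f_n(x_n) = \frac{f(a_n) - f(b_n)}{\epsilon_n} = L(x_n) + \frac{f(a_n) - f(b_n) - L(b_n\inv a_n)}{\epsilon_n}.
\]
Since $L$ is continuous and $x_n \to x$, the first summand converges to $L(x) = \pD f(p)(x)$, so everything hinges on showing the error term tends to $0$.

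To control the error I invoke strict Pansu differentiability. First, both endpoints converge to $p$: $b_n = p_n \to p$ by hypothesis, and $a_n = p_n\delta_{\epsilon_n}x_n \to p$ because $\delta_{\epsilon_n}x_n \to e$ as $\epsilon_n \to 0$ with $x_n$ bounded. Next, by left-invariance and $1$-homogeneity of the metric,
\[
d(a_n, b_n) = d(\delta_{\epsilon_n}x_n, e) = \epsilon_n\, d_e(x_n).
\]
Hence the error term factors as
\[
\frac{f(a_n) - f(b_n) - L(b_n\inv a_n)}{\epsilon_n}
= \frac{f(a_n) - f(b_n) - L(b_n\inv a_n)}{d(a_n, b_n)}\cdot d_e(x_n),
\]
interpreting the quotient as $0$ when $a_n = b_n$. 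The first factor tends to $0$ by strict Pansu differentiability, since $a_n, b_n \in B_d(p,\epsilon)$ eventually for every $\epsilon > 0$; the second factor is bounded because $d_e(x_n) \to d_e(x) < \infty$. Thus the error vanishes and $f_n(x_n) \to \pD f(p)(x)$, completing the verification.

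The main subtlety — and the reason strict differentiability, rather than ordinary Pansu differentiability, appears in the hypothesis — is that both basepoints $a_n$ and $b_n$ move with $n$, and in general neither equals $p$. Ordinary Pansu differentiability at $p$ would only let me compare function values against $f(p)$ at a single fixed basepoint; to estimate the difference quotient between two nearby moving points $a_n, b_n \to p$, I need precisely the uniform control over all pairs in a shrinking ball that strict differentiability supplies. Everything else is a routine rearrangement, so this uniformity is the one genuinely load-bearing step.
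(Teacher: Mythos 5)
Your proof is correct and complete: the set component is given by hypothesis, and your decomposition $f_n(x_n) = L(x_n) + \bigl(f(a_n)-f(b_n)-L(b_n\inv a_n)\bigr)/\epsilon_n$ with the error controlled by strict Pansu differentiability (valid precisely because both basepoints $a_n, b_n$ move toward $p$) is exactly the right mechanism. Note that the paper itself states this proposition as background quoted from \cite{FNG} without proof, so there is no in-paper argument to compare against; your argument is the natural one and is essentially how the result is established in that cited reference, including your closing observation about why ordinary Pansu differentiability at $p$ would not suffice.
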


Looking forward, our horofunctions will be piecewise-linear functions. We will be able to partition our groups into closed dilation cones on which the norm $d_e(\cdot) = d(e, \cdot)$ is strictly Pansu differentiable. For each of these closed dilation cones $\Omega$, the blow-up $\BU((\Omega,d_e),\{p_n\}_n,\{\epsilon_n\}_n)$ will give the partial function $\pD d_e(p)$ and the subdomain $\Omega_0$ on which the partial function is defined. The next theorem tells us how to piece together these partial functions to get the blow-up functions which will be our horofunctions.

\begin{theorem}{\cite[Theorem 3.9]{FNG}}\label{piecetogether}
	Let $\Omega\subseteq G$ be a closed set such that there is a family $\mathcal Q$ of regular closed sets  with disjoint interiors such that $\Omega=\bigcup_{Q\in\mathcal Q}  Q$.
	For each $Q\in\mathcal Q$, let $f_Q: G\to\R$ smooth such that the function $f:\Omega\to\R$ defined by
	\[
	f(x) := \chi(x) \sum_{Q\in\mathcal Q} f_Q(x) \one_{ Q}(x) 
	\]
	is Lipschitz continuous, where $\chi(x) := \left( \sum_{Q\in\mathcal Q} \one_{ Q}(x) \right)^{-1}$.
	
	Let $\{p_n\}_{n\in\N}\subseteq G$ and $\{\epsilon_n\}_{n\in\N}\subset(0,+\infty)$ sequences with $p_n\to p\in\Omega^\circ$, the interior of $\Omega$, and $\epsilon_n\to 0$.
	Assume that $R_Q:=\BU(Q,\{p_n\}_n,\{\epsilon_n\}_n)$ exists for every $Q\in\mathcal Q$.

	Then 
	\[
	 G = \bigcup_{Q\in\mathcal Q} R_Q
	\]
	and $\BU((\Omega,f),\{p_n\}_n,\{\epsilon_n\}_n) = ( G,g)$ exists, where
	\begin{equation}\label{eq03271750}
	g(x) = \tilde\chi(x) \sum_{Q\in\mathcal Q} \left( \pD f_Q|_p(x) + c_Q \right) \one_{R_Q}(x),
	\end{equation}
	with $\tilde\chi(x) := \left( \sum_{Q\in\mathcal Q} \one_{R_Q}(x) \right)^{-1}$
	and $c_Q\in\R$.
\end{theorem}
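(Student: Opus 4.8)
The plan is to prove the two assertions in turn — the set equality $G=\bigcup_Q R_Q$, and then the existence of the function blow-up together with the formula \eqref{eq03271750} — by reducing everything to the single-piece statement of Proposition~\ref{prop5e8c979c} and gluing. I will use that $\mathcal Q$ is finite (as it is in all the intended applications, where the $Q$ are finitely many dilation cones), so that Kuratowski upper limits commute with the union over $\mathcal Q$.

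For the covering, I would first show $\BU(\Omega,\{p_n\}_n,\{\epsilon_n\}_n)=G$. Since $p\in\Omega^\circ$ and $p_n\to p$, there is $r>0$ with $B(p,r)\subseteq\Omega$, hence $B(p_n,r/2)\subseteq\Omega$ for all large $n$; left-invariance gives $p_n\inv\Omega\supseteq B(e,r/2)$, and $1$-homogeneity of $d$ gives $\delta_{1/\epsilon_n}(p_n\inv\Omega)\supseteq B(e,r/(2\epsilon_n))$. As $\epsilon_n\to0$ these balls exhaust $G$, so the Kuratowski lower limit, and hence the blow-up, is all of $G$. Writing $\delta_{1/\epsilon_n}(p_n\inv\Omega)=\bigcup_Q\delta_{1/\epsilon_n}(p_n\inv Q)$ and using that the upper limit of a finite union is the union of the upper limits, I get
\[
G=\Ls_{n}\,\delta_{1/\epsilon_n}(p_n\inv\Omega)=\bigcup_{Q\in\mathcal Q}\Ls_{n}\,\delta_{1/\epsilon_n}(p_n\inv Q)=\bigcup_{Q\in\mathcal Q}R_Q,
\]
which is the first claim; the pieces with $p\notin Q$ contribute $R_Q=\emptyset$, exactly as in Proposition~\ref{prop03241537}(1), so only the $Q$ with $p\in Q$ matter.

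For the function part, fix a relevant piece $Q$ (so $p\in Q$) and a point $x$ in the interior of $R_Q$. For large $n$ the point $p_n\delta_{\epsilon_n}x$ lies in $Q^\circ$, where $f=f_Q$, so the difference quotient splits as
\[
\frac{f(p_n\delta_{\epsilon_n}x)-f(p_n)}{\epsilon_n}
=\frac{f_Q(p_n\delta_{\epsilon_n}x)-f_Q(p_n)}{\epsilon_n}
+\frac{f_Q(p_n)-f(p_n)}{\epsilon_n}.
\]
By Proposition~\ref{prop5e8c979c} applied to the pair $(Q,f_Q)$ — valid since $f_Q$ is smooth, hence strictly Pansu differentiable at $p$, and $R_Q$ exists — the first term tends to $\pD f_Q|_p(x)$. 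The second term $a_{Q,n}:=(f_Q(p_n)-f(p_n))/\epsilon_n$ does not depend on $x$: this constant drift, caused by normalizing with the glued value $f(p_n)$ instead of the piece value $f_Q(p_n)$, is the origin of the constants $c_Q$ in \eqref{eq03271750}. Because $f$ is Lipschitz, the functions $h_n(x):=(f(p_n\delta_{\epsilon_n}x)-f(p_n))/\epsilon_n$ are uniformly Lipschitz with $h_n(e)=0$, so $\{h_n\}$ is precompact for locally uniform convergence by Arzel\`a--Ascoli; along any convergent subsequence the $a_{Q,n}$ converge to some $c_Q$, and the limit $g$ satisfies $g|_{R_Q^\circ}=\pD f_Q|_p+c_Q$ for every relevant $Q$.

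The main obstacle is upgrading this to the existence of the full limit, i.e.\ showing that every subsequential limit is the same function; this is exactly where the Lipschitz hypothesis does its work. Any subsequential limit $g$ is continuous on $G$, so across each interface $R_Q\cap R_{Q'}$ continuity forces $\pD f_Q|_p+c_Q=\pD f_{Q'}|_p+c_{Q'}$. Hence if $g,g'$ are two subsequential limits with constants $c_Q,c_Q'$, the continuous function $g-g'$ equals $c_Q-c_Q'$ on each $R_Q^\circ$ and therefore takes only finitely many values on $G$; its level sets are closed, disjoint, and cover the connected space $G$, so $g-g'$ is globally constant, and the normalization $g(e)=g'(e)=0$ forces this constant to be $0$. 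Thus the Kuratowski limit exists, and reading off $g$ on the interiors $R_Q^\circ$ and averaging where several $R_Q$ meet (the role of $\tilde\chi$) reproduces precisely the formula \eqref{eq03271750}.
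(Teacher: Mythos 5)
The paper itself never proves this theorem---it is stated in Section~2 as background imported without proof from \cite{FNG}---so your argument has to stand on its own. Its first half does: granting your standing assumption that $\mathcal Q$ is finite, the exhaustion argument giving $\BU(\Omega,\{p_n\}_n,\{\epsilon_n\}_n)=G$, the fact that $\Ls$ of a finite union is the union of the $\Ls$'s, and the assumed existence of each $R_Q$ correctly yield $G=\bigcup_{Q\in\mathcal Q}R_Q$.

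The function half, however, hinges on a claim that is false in general: that for $x$ in the interior of $R_Q$ one has $p_n\delta_{\epsilon_n}x\in Q^\circ$ for all large $n$, so that $f=f_Q$ at those points and the difference quotient splits. Kuratowski convergence of $\delta_{1/\epsilon_n}(p_n\inv Q)$ to $R_Q$ only makes $x$ a \emph{limit} of points of the rescaled sets; it does not place $p_n\delta_{\epsilon_n}x$ inside $Q$ for large $n$. Concretely, in $G=\R$ take $p_n\equiv 0$, $\epsilon_n=4^{-n}$, set $t_j=1/(j+1)$, and let $Q$ (resp.\ $Q'$) consist of $\{0\}$ together with the intervals $[t_{j+1},t_j]$ with $j$ even (resp.\ odd), and $Q''=[-1,0]$. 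These are regular closed with disjoint interiors, and $R_Q=R_{Q'}=[0,\infty)$ and $R_{Q''}=(-\infty,0]$ all exist; but for a fixed generic $x>0$ the point $\delta_{\epsilon_n}x=x4^{-n}$ lands in $Q$ for infinitely many $n$ and in $Q'$ for infinitely many $n$, so there is no tail on which $f(p_n\delta_{\epsilon_n}x)=f_Q(p_n\delta_{\epsilon_n}x)$. Consequently your identification $g|_{R_Q^\circ}=\pD f_Q|_p+c_Q$, and the interface/connectedness argument built on it, collapse. (Relatedly, the sets $R_Q$ need not be regular closed---one can arrange $R_{Q'}$ to have empty interior while meeting $R_Q^\circ$---so ``continuity across $R_Q\cap R_{Q'}$'' is not available in the form you use it.) The missing ingredient, which is the real content of the theorem, is to use the Lipschitz continuity of the glued $f$ to compare the pieces with \emph{each other}: because $f_Q$ and $f_{Q'}$ agree on the shrinking common boundaries near $p$, their values along the relevant rescaled points differ by $o(\epsilon_n)$ up to compatible constants, which forces $\pD f_Q|_p+c_Q=\pD f_{Q'}|_p+c_{Q'}$ wherever $R_Q$ and $R_{Q'}$ overlap; only with such a comparison can one identify the limit independently of which piece the points $p_n\delta_{\epsilon_n}x_n$ happen to visit. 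Your proof invokes the Lipschitz hypothesis only for Arzel\`a--Ascoli and for continuity of subsequential limits, and that is not enough.
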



\section{Horofunctions in general Carnot groups}\label{section-general}

Let $G$ be a stratified group equipped with a layered sup metric induced by the norm
\[
\norm{x} = \max_{1\leq j \leq s} \left\{\norm{\pi_j(x)}_{V_j}^{1/j}\right \},
\]
as defined in Section~\ref{subsec:quasi-norms}. 
Throughout this paper we assume that the layered sup norm is polysmooth. That is, we assume that each layer norm $\norm{\cdot}_{V_j}$ is either
\begin{enumerate}
\item {\em polyhedral}, i.e., the unit ball $Q_j$ of $\norm{\cdot}_{V_j}$ is a convex, centrally-symmetric polyhedron, or
\item {\em smooth}, meaning the norm is continuously differentiable on $V_j \setminus \{0\}$ and is strictly convex.
\end{enumerate}

One of our goals is to understand what types of functions can arise as horofunctions in the Carnot setting. In previous work with Sebastiano Nicolussi Golo, the author found that all horofunctions of the 3-dimensional real Heisenberg group $H(\R)$ when equipped with any sub-Finsler metric were piecewise-linear functions of coordinates coming from the first layer of the Heisenberg grading. We now consider all Carnot groups equipped with layered sup norms and show that an analogous result holds.

\begin{thma}
If $G$ is a stratified group equipped with a polysmooth layered sup norm, then all blow-ups of the norm at points $p$ on the unit sphere, and hence all horofunctions, are piecewise-linear and are functions of the coordinates of the first layer of the grading.
\label{thm:genPD}
\end{thma}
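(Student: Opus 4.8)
The plan is to reduce, via Lemma~2.3 of \cite{FNG} quoted above, to analyzing blow-ups of the norm $d_e(x)=\norm{x}$ at points $p$ on the unit sphere, and then to exploit one structural fact about Pansu derivatives on a stratified group. The observation is that \emph{any} homogeneous Lie group homomorphism $L\colon G\to\R$ --- equivalently, the Pansu derivative at a point of any degree-$1$ homogeneous function --- is a linear functional of the first-layer coordinates alone. Indeed, $\R$ is abelian, so $L$ annihilates the commutator subgroup $[G,G]$, whose Lie algebra is $[\g,\g]=V_2\oplus\cdots\oplus V_s$; hence $L$ factors through $\g/[\g,\g]\cong V_1$, and its homogeneity of degree $1$ is automatic because $\delta_t$ acts as multiplication by $t$ on $V_1$. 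This single fact forces every Pansu derivative appearing below to be first-layer-linear.

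Next I would decompose the norm. Writing $\norm{x}=\max_{1\le j\le s}N_j(x)$ with $N_j(x):=\norm{\pi_j(x)}_{V_j}^{1/j}$ homogeneous of degree $1$, near a unit-sphere point $p$ only the layers with $N_j(p)=1$ (so $\pi_j(p)\ne0$) contribute to the maximum. I would partition a neighborhood of $p$, and by homogeneity the associated dilation cones, into finitely many regular closed sets $Q$ with disjoint interiors on each of which a single layer $j(Q)$ achieves the maximum and --- in the polyhedral case --- $\pi_{j(Q)}(p)$ lies over the relative interior of a single facet of the unit ball of $\norm{\cdot}_{V_{j(Q)}}$ (in the smooth case strict convexity already makes $N_j$ differentiable away from $\pi_j=0$). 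On the interior of each $Q$ one has $\norm{x}=N_{j(Q)}(x)$, which is smooth there since $\pi_{j(Q)}\ne0$, hence strictly Pansu differentiable; by the observation above its derivative $\pD N_{j(Q)}|_p$ is first-layer-linear. A direct check using Proposition~\ref{prop:polynomials} confirms the mechanism: moving from $p$ along a dilation direction $v$ changes the degree-$j$ coordinates, to first order in $t$, by a quantity linear in the first-layer components of $v$, so that $\pD N_j|_p$ is an explicit first-layer-linear functional even though $N_j$ itself involves only higher coordinates.

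Finally I would run the gluing machinery. The cones $Q$ are cut out by smooth inequalities (comparisons $N_j-N_k\le0$ and, in the polyhedral case, facet inequalities), all of whose Pansu derivatives are first-layer-linear; hence by Propositions~\ref{prop5e8c4f94} and \ref{prop5e8c979c} each blow-up $R_Q:=\BU(Q,\{p_n\}_n,\{\epsilon_n\}_n)$ is defined by first-layer-linear inequalities --- a ``cylinder'' over a region in $V_1$ --- and $\BU((Q,\norm{\cdot}),\{p_n\}_n,\{\epsilon_n\}_n)=(R_Q,\pD N_{j(Q)}|_p+c_Q)$. Theorem~\ref{piecetogether} then assembles these into the horofunction
\[
g(x)=\tilde\chi(x)\sum_{Q}\bigl(\pD N_{j(Q)}|_p(x)+c_Q\bigr)\,\one_{R_Q}(x),
\]
whose pieces (first-layer-linear plus a constant) and whose regions $R_Q$ (first-layer cylinders) both depend only on the first layer. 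Thus $g$ is piecewise-linear in the first-layer coordinates, as claimed.

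The main obstacle is not the first-layer conclusion --- which follows cleanly once the homomorphism observation is in hand --- but the bookkeeping needed to legitimately invoke the blow-up results: showing finitely many dilation cones suffice, that on each the active layer norm is genuinely smooth and strictly Pansu differentiable (treating polyhedral faces, ties between layers, and the smooth case uniformly), that the relevant defining functions have linearly independent Pansu derivatives where required, and that the glued function is Lipschitz so that Theorem~\ref{piecetogether} applies. Verifying these regularity and transversality hypotheses, simultaneously in the polyhedral and smooth settings, is where the real work lies.
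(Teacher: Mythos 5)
Your proposal is correct, and its skeleton matches the paper's: reduce to blow-ups at points of the unit sphere, split a neighborhood of $p$ into dilation cones on which a single layer (and, in the polyhedral case, a single facet) is active, and glue with Propositions~\ref{prop5e8c4f94} and~\ref{prop5e8c979c} and Theorem~\ref{piecetogether}. The core step, however, is genuinely different. You obtain first-layer-linearity structurally: a homogeneous homomorphism $L\colon G\to\R$ annihilates $[G,G]$, and in a stratified group $[\g,\g]=V_2\oplus\cdots\oplus V_s$, so $L$ factors through $\g/[\g,\g]\cong V_1$. The paper instead computes: Propositions~\ref{prop:pansuDiff} and~\ref{prop:edges} evaluate $\lim_{t\to0}\bigl(\norm{p\delta_tx}-\norm{p}\bigr)/t$ via the chain rule and Proposition~\ref{prop:polynomials}, observing that the coefficient of $t$ in $(p\delta_tx)_{i_k}$ comes only from Baker--Campbell--Hausdorff terms with $[\beta]=1$ and is therefore linear in the first-layer coordinates of $x$. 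Your route is cleaner and isolates the real reason the theorem holds, and it applies verbatim to any homogeneous norm whose pieces are smooth; but note what it costs. First, you must import the standard fact that Euclidean $C^1$ regularity implies strict Pansu differentiability (your ``smooth, hence strictly Pansu differentiable''), whereas the paper's computation establishes differentiability and identifies the derivative simultaneously. Second, the explicit coefficients the paper extracts along the way (the $c_{i,1}$, and the formula $\pm c_{i,1}/\nu_i$ in the discussion following Proposition~\ref{prop:pansuDiff}) are exactly what is reused later in Tables~\ref{table:coords} and~\ref{table:pD} for the filiform dimension counts, so the computational route earns its keep elsewhere in the paper. The verification obligations you flag at the end (finiteness of the cone decomposition, linear independence of the defining derivatives, Lipschitzness for the gluing theorem) are left largely implicit in the paper as well, so neither argument is more complete than the other on that score.
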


We will prove Theorem A by first computing the Pansu derivatives of the layered sup norms at certain points on the unit metric spheres. We will then be able to apply the results from \cite{FNG} as given in Section~\ref{sec:blow-ups}.

\begin{prop}\label{prop:pansuDiff} Suppose that $p$ is a point on the unit metric sphere $\partial B$ of a polysmooth layered sup norm on $G$ such that $\norm{\pi_i(p)}_{V_i}^{1/i} = 1$ and such that $\norm{\pi_j(p)}_{V_j}^{1/j} < 1$ for $j\neq i$. If $\pi_i(p)$ is in the interior of a facet of the polyhedron $Q_i$ or if $\norm{\cdot}_{V_i}$ is smooth, then the layered sup norm
\[
\norm{x} = \max_{1\leq j \leq s} \left\{\norm{\pi_j(x)}_{V_j}^{1/j}\right \}
\]
is Pansu differentiable at $p$. The principal blow-up or Pansu derivative is a linear function in the coordinates of $x$ coming from the first layer $V_1$ of the stratification.
\end{prop}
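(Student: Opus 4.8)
The plan is to reduce the computation to a single layer and then combine the ordinary differentiability of the active layer norm with the weight bookkeeping of the Baker--Campbell--Hausdorff formula. First I would localize: since $\norm{\pi_i(p)}_{V_i}^{1/i}=1$ while $\norm{\pi_j(p)}_{V_j}^{1/j}<1$ for every $j\neq i$, continuity of each map $x\mapsto\norm{\pi_j(x)}_{V_j}^{1/j}$ yields a neighborhood $U$ of $p$ on which the $i$-th term strictly dominates, so that $\norm{x}=g(x):=\norm{\pi_i(x)}_{V_i}^{1/i}$ for all $x\in U$. Because Pansu differentiability at $p$ only concerns the behavior as $x\to p$, it suffices to compute $\pD g|_p$. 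Writing $q=p^{-1}x$, the task becomes analyzing $g(pq)$ as $q\to e$, with $\epsilon:=\norm{q}=d(p,x)$ in the denominator of the difference quotient.

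Second, I would isolate the candidate derivative by expanding $\pi_i(pq)$. By Proposition~\ref{prop:polynomials}, for each coordinate $j$ in layer $i$ we have $(pq)_j=p_j+q_j+\sum_{[\alpha]+[\beta]=i}c_{j,\alpha,\beta}\,p^\alpha q^\beta$ with $\alpha,\beta\neq 0$. Homogeneity of the norm forces $\norm{\pi_k(q)}_{V_k}\lesssim\epsilon^k$, hence $q^\beta=O(\epsilon^{[\beta]})$ and $q_j=O(\epsilon^i)$; since $[\alpha]\ge 1$ forces $[\beta]\le i-1$, the only contributions of order $\epsilon$ are those with $[\beta]=1$. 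As the weight-one coordinates are exactly the first layer, these are linear in $\pi_1(q)$, so $\pi_i(pq)=\pi_i(p)+A(\pi_1(q))+O(\epsilon^2)$ for a fixed linear map $A:V_1\to V_i$ assembled from the frozen factors $p^\alpha$ with $[\alpha]=i-1$. I then propose $L(q):=\tfrac1i\,DN_i|_{v_0}\bigl(A(\pi_1(q))\bigr)$, where $N_i:=\norm{\cdot}_{V_i}$ and $v_0:=\pi_i(p)$.

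Third, I would verify the Pansu estimate. This is where the hypothesis enters: the point $v_0$ lies on the unit sphere of $N_i$, and there $N_i$ is ordinarily differentiable --- automatically away from $0$ when $\norm{\cdot}_{V_i}$ is smooth, and, when $v_0$ lies in the relative interior of a facet of $Q_i$, because the supporting hyperplane is then unique so that $N_i$ agrees locally with the single dual functional $DN_i|_{v_0}$. Using $N_i(v_0)=1$, differentiability of $N_i$ at $v_0$, and the scalar expansion $(1+w)^{1/i}=1+\tfrac1i w+O(w^2)$, I obtain $g(pq)-g(p)=\tfrac1i DN_i|_{v_0}(A(\pi_1(q)))+o(\epsilon)=L(q)+o(\norm{q})$, which is precisely the Pansu differentiability limit. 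Finally, $L$ is a homogeneous Lie group homomorphism $G\to\R$: it is linear in $\pi_1(q)$, first-layer coordinates add under multiplication by Proposition~\ref{prop:polynomials}, and $L\circ\delta_t=tL$ by construction; thus $\pD g|_p=L$ depends only on the coordinates of $V_1$. Identifying this principal blow-up with $\pD d_e|_p$ is then immediate from Lemma 2.3 of \cite{FNG} as stated in Section~\ref{sec:Pansudifferentiability}.

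The main obstacle is the uniform control of the remainder. Because $\norm{q}$ may be realized by a higher layer, $\pi_1(q)$ can be small compared with $\epsilon$, so I must confirm that every discarded contribution --- the layer-$i$ term $q_j$, the cross terms with $[\beta]\ge 2$, and the differentiability remainder of $N_i$ --- is genuinely $o(\norm{q})$ as $q\to e$ in all directions, not merely along a fixed approach. This is exactly what the degree bookkeeping $[\alpha]+[\beta]=i$ together with the $1$-homogeneity of $g$ is designed to guarantee, and checking it carefully is the technical heart of the argument.
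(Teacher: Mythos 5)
Your proof is correct, and its skeleton matches the paper's: localize to the dominant layer $i$ (the strict inequalities $\norm{\pi_j(p)}_{V_j}^{1/j}<1$ for $j\neq i$ give a neighborhood of $p$ where the max is computed by layer $i$ alone), use Proposition~\ref{prop:polynomials} to see that the only weight-one contributions to the layer-$i$ coordinates of a product come from first-layer coordinates, and invoke ordinary differentiability of $\norm{\cdot}_{V_i}$ at $\pi_i(p)$, valid in both the smooth and facet-interior cases. Where you genuinely differ is in how the limit is run. The paper fixes $x$ and differentiates along the dilation curve $t\mapsto p\delta_t x$ by the chain rule, reading the coefficient of $t$ off from Proposition~\ref{prop:polynomials}; this computes the principal blow-up pointwise in $x$ and yields the explicit coefficients $c_{k,\alpha,\beta}$ that the paper reuses later (e.g.\ in Table~\ref{table:pD}). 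You instead set $q=p^{-1}x$ and verify the definition of Pansu differentiability directly, proving $\norm{pq}-\norm{p}-L(q)=o(\norm{q})$ as $q\to e$ with remainder control uniform over directions. Your route is the more complete one as a proof of the differentiability claim: a curvewise limit along dilation rays is a priori only the principal blow-up, and promoting it to the full limit $x\to p$ requires exactly the uniformity you flag as the technical heart; in the paper this point is left implicit in the continuous differentiability (hence strict differentiability) of the layer norm near $\pi_i(p)$. What you give up is the explicit coordinate formula for the derivative, which your argument packages abstractly in the linear map $A$. One harmless edge case: when $i=1$ the cross-term sum in Proposition~\ref{prop:polynomials} is empty, so $A$ should be read as the identity contribution of $\pi_1(q)$ itself rather than as assembled from factors $p^\alpha$ with $[\alpha]=0$; the conclusion is unchanged.
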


\begin{proof}
Suppose that the $i$th layer $V_i \cong \R^r$ and that $\pi_i(x) = (x_{i_1},\ldots, x_{i_r})$. To check the Pansu differentiability of the norm $\norm{\cdot}$ at the point $p$ as above, we compute
\begin{align*}
\lim_{t\to 0}\frac{\norm{p\delta_t x} - \norm{p}}{t}  &= \lim_{t\to0} \frac{\norm{\pi_i(p\delta_t x)}_{V_i}^{1/i}-1}{t}\\
&= \lim_{t\to0} \frac{1 /i}{\norm{\pi_i(p\delta_t x)}_{V_i}^{(i-1)/i}}\cdot \ddt \norm{\pi_i(p\delta_t x)}_{V_i}\\
&=  \frac{1}{i}\cdot \lim_{t\to0} \;\ddt \norm{((p\delta_tx)_{i_1},\ldots, (p\delta_tx)_{i_r})}_{V_i}\\
&=  \frac{1}{i}\cdot \lim_{t\to0} \; \sum_{k=1}^{r} \left.\frac{\partial}{\partial x_{i_k}}\right\vert_{\pi_i(p\delta_tx)}\norm{(x_{i_1},\ldots,x_{i_r})}_{V_i} \:\ddt (p\delta_tx)_{i_k}.
\end{align*}
By assumption, our layer norms $\norm{\cdot}_{V_j}$ are continuously differentiable in a small neighborhood containing $p$, and hence each partial derivative $\frac{\partial}{\partial x_{i_k}}\norm{\pi_i(p\delta_t x)}_{V_i}$ exists for $t$ sufficiently small. As $t \to 0$,  the limit $\lim_{t\to 0} \frac{\partial}{\partial x_{i_k}}\norm{\pi_i(p\delta_t x)}_{V_i} = \frac{\partial}{\partial x_{i_k}}\norm{\pi_i(p)}_{V_i}$, which does not depend on $x$.

Since $(p\delta_tx)_{i_k}$ is a polynomial in the coordinates of $p$ and $\delta_tx$, the limit $\lim_{t\to 0}\ddt (p\delta_tx)_{i_k}$ will pick out the coefficient of the linear term $t$ in $(p\delta_t x)_{i_k}$. From Proposition~\ref{prop:polynomials}, we know that the terms in the coordinate $ (p\delta_tx)_{i_k}$ of the product which are linear in $t$ are
\[
\sum_{ [\alpha]=i-1, [\beta]=1} c_{k,\alpha,\beta}(p)^\alpha(\delta_tx)^\beta.
\]
Hence the coefficient of $t$ in $(p\delta_t x)_{i_k}$ is a linear combination of products of the coordinates of $p$, which are fixed, with linear expressions in coordinates of $x$ coming from the first layer of the grading.

We conclude, therefore, that the polysmooth layered sup norm is Pansu differentiable at $p$, and the derivative is a linear function of the coordinates of $x$ coming from the first layer of the stratification.

\end{proof}

Note that when $G$ is a filiform group, since every layer except $V_1$ is 1-dimensional, any layered sup norm, modulo scaling layer norms by a constant $\lambda_j$ as in Guivarc'h's Lemma~\ref{guivarch}, is identical to the homogeneous sup norm $\norm{x} = \max_j \left\{ |x_j|^{1/\nu_j}\right \}$ except possibly in the first layer. Since we will study the horofunction boundaries of filiform groups in Section~\ref{sec:filiform}, it will be useful to look at the particular case of Proposition~\ref{prop:pansuDiff} for the homogeneous sup norm.

Let $p = (a_1, \ldots, a_r)$ and suppose that $|a_i| = 1$ while $|a_j| < 1$ for $j \neq i$. If we let $c_{i,k}$ denote the coefficient of $t^k$ in the $i$th coordinate fo the product $p\delta_tx$, then we have
\begin{align*}
\lim_{t\to0}\frac{\norm{p \delta_tx}-\norm{p}}{t} & = \lim_{t\to 0} \frac{\left| a_i + t^{\nu_i}x_i + \sum_{ [\alpha] + [\beta] = \nu_i} c_{i,\alpha,\beta}(p)^\alpha(\delta_tx)^\beta \right|^\frac1{\nu_i} - 1}{t}\\
& = \lim_{t\to 0} \frac{\left| x_it^{\nu_i} + c_{i,\nu_i-1}t^{(\nu_i-1)} + \cdots + c_{i,1}t + a_i \right|^\frac1{\nu_i} - 1}{t}\\
& = \pm \frac{c_{i,1}}{\nu_i|a_i|^{(\nu_i-1)/\nu_i}} = \pm \frac{c_{i,1}}{\nu_i},
\end{align*}

where the sign in the last line depends on the sign of $a_i$.

\begin{prop}\label{prop:edges} Suppose that $p \in \partial B$, that $\norm{\pi_i(p)}_{V_i}^{1/i} = 1$, and that $\norm{\pi_j(p)}_{V_j}^{1/j} < 1$ for $j\neq i$. Additionally assume that $\norm{\cdot}_{V_i}$ is polyhedral and that $\pi_i(p)$ belongs to a face of $Q_i$ of codimension greater than or equal to 2. In general, the polysmooth layered sup norm
is not Pansu differentiable at $p$, but its principal blow-up is a piecewise-linear function of the coordinates of $x$ coming from the first layer $V_1$ of the stratification.
\end{prop}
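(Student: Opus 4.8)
The plan is to compute the principal blow-up of the norm at $p$ directly, mirroring the calculation in the proof of Proposition~\ref{prop:pansuDiff} but now accounting for the fact that the polyhedral layer norm $\norm{\cdot}_{V_i}$ fails to be differentiable at $\pi_i(p)$. Since $\norm{\pi_i(p)}_{V_i}^{1/i}=1$ while $\norm{\pi_j(p)}_{V_j}^{1/j}<1$ for $j\neq i$, continuity of the layer projections guarantees that for all sufficiently small $t>0$ the maximum defining $\norm{p\delta_t x}$ is still attained by the $i$th layer, so that $\norm{p\delta_t x}=\norm{\pi_i(p\delta_t x)}_{V_i}^{1/i}$ and we may work entirely inside $V_i$.

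First I would use convex geometry to describe $\norm{\cdot}_{V_i}$ near $\pi_i(p)$. Writing the polyhedral norm in dual form $\norm{v}_{V_i}=\max_{y\in\mathrm{vert}(Q_i^\circ)}\langle y\mid v\rangle$, the vertices of $Q_i^\circ$ correspond to the facets of $Q_i$, and the functionals attaining the value $1$ at $\pi_i(p)$ are exactly those dual to the facets containing the face $F\ni\pi_i(p)$, i.e.\ the vertices $y^{(1)},\dots,y^{(m)}$ of the exposed dual $F^\circ$. Because $F$ has codimension at least $2$, the dual face $F^\circ$ has dimension at least $1$, so $m\geq 2$. Every remaining facet functional takes a value strictly below $1$ at $\pi_i(p)$, so on a neighborhood of $\pi_i(p)$ these drop out of the maximum and $\norm{v}_{V_i}=\max_{1\le r\le m}\langle y^{(r)}\mid v\rangle$. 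As $\pi_i(p\delta_t x)\to\pi_i(p)$ when $t\to 0$, this local formula applies for all small $t$.

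Next I would expand each active functional to first order in $t$. Exactly as in Proposition~\ref{prop:pansuDiff}, Proposition~\ref{prop:polynomials} shows that each coordinate $(p\delta_t x)_{i_k}$ is a polynomial in $t$ whose constant term is $(\pi_i(p))_k$ and whose coefficient of $t$ is a linear expression in the first-layer coordinates of $x$. Applying the linear functional $\langle y^{(r)}\mid\cdot\rangle$ gives $\langle y^{(r)}\mid\pi_i(p\delta_t x)\rangle = 1+t\,A_r(x)+O(t^2)$, where each $A_r$ is linear in the first-layer coordinates of $x$. For small $t>0$ the maximum equals $1+t\max_r A_r(x)+O(t^2)$, and raising to the power $1/i$ yields
\[
\lim_{t\to 0^+}\frac{\norm{p\delta_t x}-\norm{p}}{t}=\frac1i\,\max_{1\le r\le m}A_r(x),
\]
a maximum of finitely many linear functions of the first-layer coordinates, hence piecewise-linear in those coordinates, as claimed. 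Since a Pansu derivative must be a homogeneous homomorphism and in particular additive, whereas a genuine maximum of distinct linear functionals is not additive, the norm fails to be Pansu differentiable at $p$ precisely when at least two of the $A_r$ differ; this is the generic situation and is the meaning of \emph{in general} in the statement.

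The step I expect to be most delicate is passing the maximum through the limit while simultaneously identifying the principal blow-up in the sense of Section~\ref{sec:blow-ups}, i.e.\ as a Kuratowski limit of graphs rather than merely a pointwise directional derivative. To make this rigorous I would partition a conical neighborhood of $p$ into the finitely many closed dilation cones $\Omega_r$ on which the $r$th functional is active; on each $\Omega_r$ the norm coincides with the smooth single-facet norm $\langle y^{(r)}\mid\pi_i(\cdot)\rangle^{1/i}$ already handled in Proposition~\ref{prop:pansuDiff}, so it is strictly Pansu differentiable there with linear derivative $\tfrac1i A_r$. Theorem~\ref{piecetogether} then assembles these partial derivatives into the piecewise-linear blow-up, at once confirming existence of the limit and its stated form. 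The only point requiring care is verifying the hypotheses of Theorem~\ref{piecetogether}---that the pieces glue to a Lipschitz function and that each cone $\Omega_r$ admits a blow-up $R_r$---which follows from the convexity of $\norm{\cdot}_{V_i}$ and the polyhedral structure of $Q_i$ along $F$.
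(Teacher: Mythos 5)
Your proof is correct, and its skeleton is the same as the paper's: restrict to the $i$th layer (which alone attains the max near $p$), exploit the local piecewise-linear structure of the polyhedral norm $\norm{\cdot}_{V_i}$ near $\pi_i(p)$, and use Proposition~\ref{prop:polynomials} to see that the coefficient of $t$ in each coordinate of $p\delta_t x$ is linear in the first-layer coordinates of $x$. The difference is one of formulation, and yours buys some extra rigor. The paper works on the primal side: near $\pi_i(p)$ the norm is linear on each of finitely many cones $C_\alpha$, the curve $t\mapsto \pi_i(p\delta_t x)$ stays in one cone for small $t$, and the derivative there is $\frac1i\,\alpha\cdot(c_{i_1,1},\ldots,c_{i_r,1})$. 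You work on the dual side, writing the norm locally as the maximum of the finitely many active functionals $y^{(r)}$ (the vertices of the exposed dual $F^\circ$) and passing the max through the limit; this is the same decomposition in dual language, but the identity $\max_r\bigl(1+tA_r(x)+O(t^2)\bigr)=1+t\max_r A_r(x)+O(t^2)$ remains valid even at points $x$ where two functionals tie, a case in which the paper's ``stays in a single cone'' argument is, strictly speaking, incomplete. You also supply two things the paper's proof omits: an actual justification of the non-differentiability claim (a Pansu derivative into $\R$ must be additive, while a genuine maximum of distinct linear functionals is not), and an explicit appeal to Theorem~\ref{piecetogether} to upgrade the pointwise directional limit to a principal blow-up in the Kuratowski sense, which is how the paper itself handles the analogous gluing in the proof of Theorem~A. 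Both additions are consistent with the paper's framework and strengthen the argument.
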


\begin{proof}
Following the same arguments as in the proof of Proposition~\ref{prop:pansuDiff}, we have

\begin{align*}
\lim_{t\to 0}\frac{\norm{p\delta_t x} - \norm{p}}{t}
&= \frac{1}{i}\cdot \lim_{t\to0} \;\ddt \norm{\pi_i(p\delta_t x)}_{V_i}.
\end{align*}

Since this norm is polygonal, locally near $\pi_i(p)$, the norm $\norm{\cdot}_{V_i}$ is piecewise-linear where each subdomain is a linear cone.
For a fixed $x \in G$, the curve $\{p\delta_tx\mid t\geq0\}$ is a smooth curve based at $p$ which is polynomial in $t$. Hence for $t$ sufficiently small, the projected curve $\pi_i(p\delta_t x)$ must stay in a single linear cone, say $C_\alpha$ where $\norm{y}_{V_1} = \alpha \cdot y$ for all $y \in C_\alpha$. In fact, we could write down conditions on the coordinates $x$ which would guarantee that $\pi_i(p\delta_tx) \in C_\alpha$ for $t$ sufficiently small. These conditions will depend on the coefficients of the terms which are linear in $t$ in $\pi_i(p\delta_tx)$, and, therefore, will depend only on coordinates of $x$ coming from the first layer of the grading. For such $x \in G$, 

\begin{align*}
\frac{1}{i}\cdot \lim_{t\to0} \;\ddt \norm{\pi_i(p\delta_t x)}_{V_i} &= \frac1i \lim_{t\to 0}\ddt\; \alpha \cdot \pi_i(p\delta_tx)\\
& = \frac1i \;\alpha\cdot \left(\lim_{t\to 0}\;\ddt\; \pi_i(p\delta_tx)\right) = \frac1i \; \alpha \cdot (c_{i_1,1}, \ldots, c_{i_r,1}),
\end{align*}
where again, $c_{i,k}$ is the coefficient of $t^k$ in the $i$th coordinate of $p\delta_t x$. For the same reasons as above is a linear function in the coordinates of the first layer of the stratification. Since this holds for each of the partial functions and linear cones defining the polygonal norm, our principal blow-up function is piecewise-linear and a function of the coordinates of the first layer of the stratification.

\end{proof}

\begin{proof}[Proof of Theorem A]
Let $p$ be a point on the unit sphere $\partial B$ of a polysmooth layered sup norm satisfying $\norm{\pi_i(p)}_{V_i}^{1/i} = 1$ and $\norm{\pi_j(p)}_{V_j}^{1/j} < 1$ for $j\neq i$. For any $p$, whether $\norm{\cdot}_{V_i}$ is smooth or polyhedral,
Propositions \ref{prop:pansuDiff} and \ref{prop:edges} tell us that the principal blow-up at $p$ is either a linear or piecewise-linear function of the coordinates of $x$ coming from the first layer $V_1$ of the stratification. Fisher--Nicolussi Golo's theorem stated above as Theorem~\ref{piecetogether} then tells us how to piece together these blow-up functions when $p$ lies on a face of higher codimension, where more of the projections $\norm{\pi_j(p)}_{V_j}^{1/j}$ equal 1. Since we are piecing together linear and piecewise-linear functions of coordinates coming from the first layer of the grading, the resulting function has these same properties.
Finally, once we have the principal blow-ups at all of these points on $\partial B$, we can use Proposition~\ref{prop5e8c4f94} to see that all other blow-up functions are simply translates of the principal blow-ups, proving the result.
\end{proof}

\section{Horofunction boundaries of higher Heisenberg groups}
Let $\HHR$ be the real Heisenberg group of dimension $2n+1$ and of step 2. In exponential coordinates, $\HHR \cong \R^{2n+1}$, which we will think of as $\R^n \times \R^n \times \R$, where
\[
(x,y,z) (x', y', z') = \left(x + x', y + y', z + z' + \frac12(x\cdot y' - x'\cdot y)\right),
\]
and where $\cdot$ above represents the standard inner product on $\R^n$. We will equip $\HHR$ with a polysmooth layered sup norm
\[
\norm{(x,y,z)} = \max\left\{\norm{(x,y)}_{V_1}, \lambda\sqrt{|z|}\right\},
\]
where we will endow the first layer $V_1 \cong R^{2n}$ with a norm $\norm{\cdot}_{V_1}$ which is either
\begin{enumerate}
\item polyhedral, or
\item smooth, as defined in Section~\ref{section-general}.
\end{enumerate}
For generic metrics of this type, we are able to give explicit analytic and topological descriptions of the horofunction boundary. There are certain layered sup norms on $\HHR$ which exhibit special symmetries and for which typically disjoint parts of the horofunction boundary have nonempty intersection. We will say that these norms have {\em non-separated} boundaries. In these examples, our discussions below describe the horofunctions in the boundary, but the topological type of the boundary may differ from the general case. See Section~\ref{sec:degenerate} for a more precise definition of non-separation and Section~\ref{sec:deg_example} for an example of the boundary of a norm with non-separated boundary.

\begin{thmb}\label{thm:thmb}
The horofunction boundary of $\HHR$ equipped with a polysmooth layered sup norm has dimension $2n$. Except in non-separated cases, the horofunction boundary is homeomorphic to a $2n$-dimensional button pillow. That is, the boundary is homeomorphic to the sphere $S^{2n}$ with closed neighborhoods of the north and south poles identified, as in Figure~\ref{fig:button}.
\end{thmb}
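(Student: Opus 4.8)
The plan is to combine Theorem~\ref{thm:genPD} with the Pansu-derivative description of horofunctions from \cite{FNG} (Lemma 2.3, recalled in Section~\ref{sec:Pansudifferentiability}): every horofunction of $\HHR$ is a blow-up of the norm $d_e=\norm{\cdot}$ at a point $p$ of the unit sphere $\partial B$, and by Theorem~\ref{thm:genPD} each such blow-up is a piecewise-linear function of the horizontal coordinates $(x,y)$ alone. I would first stratify $\partial B$ by which layer attains the maximum in $\norm{(x,y,z)}=\max\{\norm{(x,y)}_{V_1},\lambda\sqrt{|z|}\}$: the \emph{horizontal part} $\Sigma=\{\norm{(x,y)}_{V_1}=1>\lambda\sqrt{|z|}\}$, the two \emph{caps} $C^{\pm}=\{\lambda\sqrt{|z|}=1>\norm{(x,y)}_{V_1},\ \pm z>0\}$ centered at the poles $(0,0,\pm\lambda^{-2})$, and the \emph{equator} $E^{\pm}=\{\norm{(x,y)}_{V_1}=\lambda\sqrt{|z|}=1\}$ where the two layers tie.

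Next I would compute the principal blow-up on each stratum. On $\Sigma$ the norm agrees with $\norm{\pi_1(\cdot)}_{V_1}$ near $p$, so Proposition~\ref{prop:pansuDiff} (with $i=1$), together with Proposition~\ref{prop:edges} at the edges, gives the directional derivative of $\norm{\cdot}_{V_1}$ at $\pi_1(p)$, a (piecewise-)linear functional $g_{(x_0,y_0)}$ of $(x,y)$ that is \emph{independent of the vertical coordinate} $z_0$; as $\pi_1(p)$ ranges over $\partial Q_1$ these sweep out the dual sphere $\partial Q_1^\circ$. On $C^{\pm}$ the norm agrees with $\lambda\sqrt{|z|}$; differentiating $t\mapsto\norm{p\,\delta_t(x,y,z)}$ and reading off the linear-in-$t$ term $\tfrac12(x_0\cdot y-x\cdot y_0)$ of the product's $z$-coordinate yields
\[
f^{\pm}_{(x_0,y_0)}(x,y)=\pm\frac{\lambda^2}{4}\bigl(x_0\cdot y-x\cdot y_0\bigr).
\]
The key observation is that $f^{+}_{(x_0,y_0)}=f^{-}_{(-x_0,-y_0)}$, so the two caps sweep out the \emph{same} family of functionals, and that this family is injectively parametrized by $(x_0,y_0)$ because the symplectic pairing is nondegenerate. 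Full-dimensionality follows at once: the open cap $\{\norm{(x_0,y_0)}_{V_1}<1\}$ embeds as an open $2n$-cell in $\bhor\HHR$, so $\dim\bhor\HHR\ge 2n$; since the images of $\Sigma$, $C^{\pm}$, and the equatorial families each have dimension at most $2n$, the dimension is exactly $2n$, i.e.\ codimension $1$.

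It remains to identify the homeomorphism type. The cap functionals form a closed $2n$-disc $\cD=\{f_{(x_0,y_0)}:\norm{(x_0,y_0)}_{V_1}\le1\}$ with boundary $\partial\cD\cong S^{2n-1}$, and the horizontal functionals form a second sphere $\cS\cong\partial Q_1^\circ\cong S^{2n-1}$. At the equator both partial blow-ups survive, so Theorem~\ref{piecetogether} assembles them into $\max\{g_{(x_0,y_0)},f^{\pm}_{(x_0,y_0)}\}$, and Proposition~\ref{prop5e8c979c} identifies the remaining (non-principal) blow-ups as the relative translates $\max\{g_{(x_0,y_0)},f^{\pm}_{(x_0,y_0)}+c\}$, $c\in[-\infty,+\infty]$, an arc interpolating from the pure horizontal functional to the pure cap functional. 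These arcs realize two collars joining $\cS$ to $\partial\cD$: the one from $E^+$ attaches by $(x_0,y_0)\mapsto f^{+}_{(x_0,y_0)}$, the one from $E^-$ by the antipode $(x_0,y_0)\mapsto f^{+}_{(-x_0,-y_0)}$. Since both collars share $\cS$, they combine into a single band $S^{2n-1}\times[-1,1]$ whose two boundary spheres are glued to $\partial\cD$ by the identity and the antipode; capping this band with $\cD$ gives precisely $S^{2n}$ with its two polar caps identified antipodally, that is, the $2n$-dimensional button pillow.

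The main obstacle is the equatorial analysis. One must verify that the hypotheses of Theorem~\ref{piecetogether} and Proposition~\ref{prop5e8c979c} genuinely hold at the corner points $E^{\pm}$ (linear independence of the two layer Pansu derivatives and the requisite Lipschitz and strict-differentiability conditions), so that the blow-ups there are exactly the interpolating max-functions, and that these glue continuously, in the compact-open topology, to both $\cD$ and $\cS$. A secondary difficulty is to characterize precisely the symmetric norms for which the two attaching maps of the band coincide or overlap---the \emph{non-separated} case of Section~\ref{sec:degenerate}---where the quotient is taken along a strictly larger identification and the clean button-pillow description breaks down. The polyhedral case introduces no new ideas but requires tracking the face and exposed-dual stratification of $Q_1$ from Section~\ref{sec:convex}, which refines $\cS$ and the collars into cells while leaving the global gluing unchanged.
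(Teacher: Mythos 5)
Your proposal follows essentially the same route as the paper: the same stratification of the unit sphere (your $\Sigma$, $C^{\pm}$, $E^{\pm}$ are the paper's wall, ceiling/floor, and seam points), the same blow-up computations (the symplectic functionals $\pm\frac{\lambda^2}{4}(x_0\cdot y - y_0\cdot x)$ at cap points, Schilling's dual boundary $\partial Q_1^\circ$ at wall points, and one-parameter translate families at seam points), and the same final assembly of a disc and a $S^{2n-1}\times I$ band into the button pillow, with the separated hypothesis playing the identical role. The two obstacles you flag are precisely what the paper's seam lemmas resolve---the transversality/linear-independence check is done by computing $\pD G_j|_p$ for the ceiling-defining functions and evaluating at $-p$, and the disjointness of the ceiling and floor collars is proved by comparing subdomains of the partial functions at $p$ and $-p$---so your outline is correct and matches the paper's proof.
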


To prove this theorem, we make use of the result \cite{FNG} described above in Section~\ref{sec:Pansudifferentiability}. Indeed, by computing all possible blow-ups, that is, generalized directional derivatives, of the homogeneous norm at points on the unit metric sphere, we will find all horofunctions in boundary. Recall that for a point $p$ on the unit sphere $\partial B$ and $q \in \HHR$, the principal blow-up $f$ at $p$ is
\[
f(q) = \lim_{t\to0} \frac{\norm{p\delta_tq} - \norm{p}}{t}.
\]
If this blow-up function is linear, the principal blow-up is the only blow-up of the norm at $p$. Otherwise, by Theorem~A the principal blow-up function is piecewise-linear and defined by at least two partial functions. In this case, the set of all blow-ups at this point is equal to the set of translates of the principal blow-up, as described in Section~\ref{sec:blow-ups}. We will break up the metric spheres into various regions to analyze the sets of blow-up functions. Identifying $\HHR$ with $\R^n \times \R^n \times \R$, we will think of $\R^n \times \R^n$ as horizontal directions and of $\R$ as the vertical component. In doing so, we break up $\partial B$ into interior ceiling and floor points, interior wall points, and seam points at the intersection of the walls with the ceiling or floor. See, for example, Figure~\ref{fig:button}.

\begin{figure}[ht]
\centering
\begin{tikzpicture}[scale=.75]

\node at (0,-.2) {\includegraphics[width=1.5in]{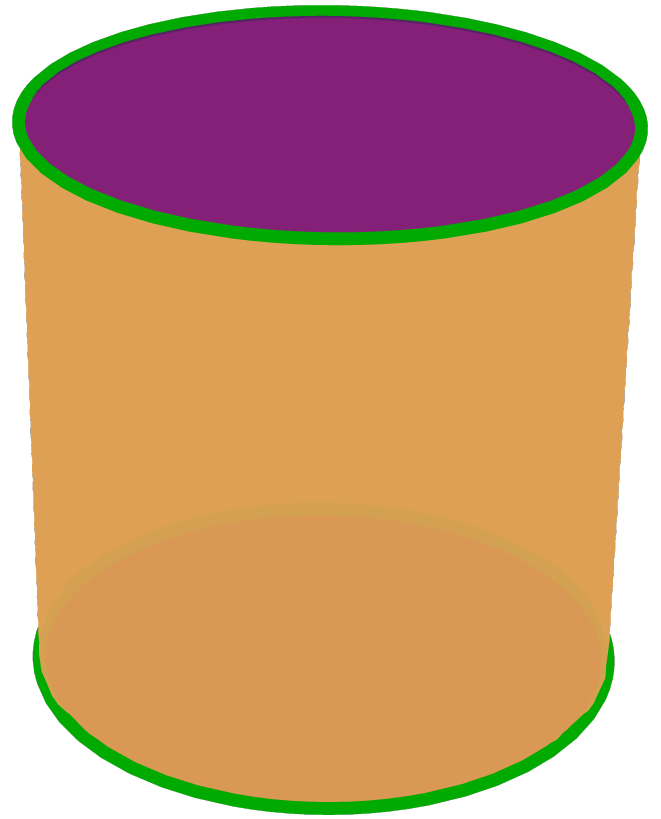}};
\node at (-3.5 ,1.5) [fill = red!50!blue, opacity =.7] {ceiling};
\node at (-3.5 ,-.5) [fill = orange, opacity =.7] {wall};
\node at (3.2 ,-.2) [fill = green!80!brown, opacity =.7] {seams};
\draw[very thick, ->] (-2.6, 1.5) to[bend left = 40] (-.5,2);
\draw[very thick, ->] (-2.8, -.6) to[bend right = 40] (-1,-1);
\draw[very thick, ->] (3.2, .2) to[bend right = 30] (2.5,1.8);
\draw[very thick, ->] (3.2, -.6) to[bend left = 30] (2.2,-2);

\node at (0,4) [fill = white, draw = black] {Unit metric sphere};
\draw[very thick, ->] (5.2, 0) to (6.2,0);

\begin{scope}[xshift = 11.5 cm, yshift = 0cm]

\node at (0,-.2) {\includegraphics[width=2.3in]{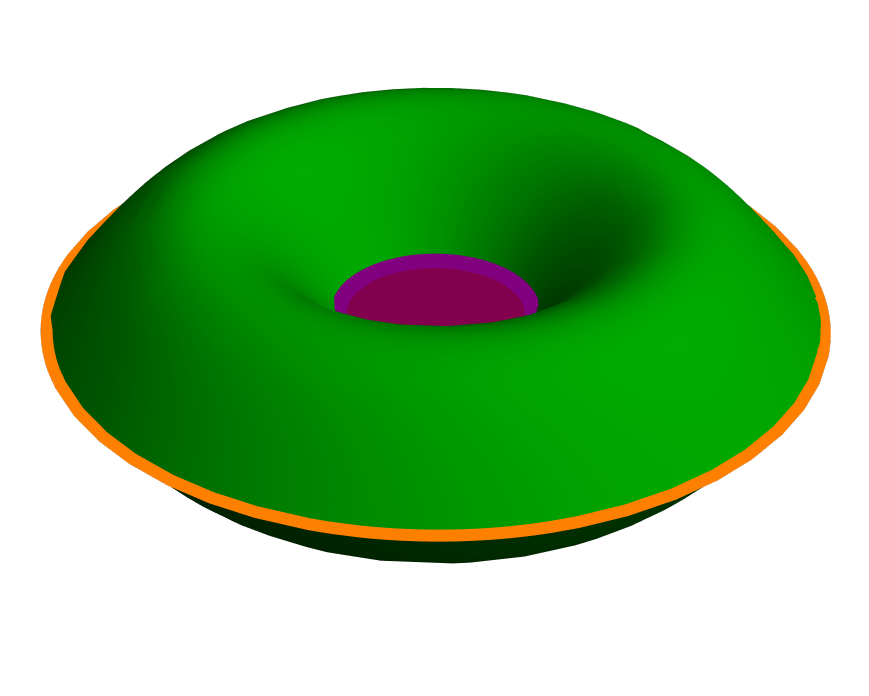}};

\node at (0,4) [fill = white, draw = black] {Horofunction boundary};

\end{scope}

\begin{scope}[yshift = -9cm, scale=1]

\draw[dashed] (-3,-1) -- (1, -1) -- (2.8, .8) -- (-1.2, .8) -- cycle;
\draw[dashed] (-1.2,.8) -- (-1.2, 4.6);

\fill[blue, opacity = .7] (-3, 2.8) -- (1, 2.8) -- (1, -1) -- (-3, -1) -- cycle;
\fill[blue, opacity = .7] (1, 2.8) -- (1, -1) -- (2.8, .8) -- (2.8, 4.6) --  cycle;
\fill[violet!90!black, opacity = .9] (-3, 2.8) -- (1, 2.8) -- (2.8, 4.6) -- (-1.2, 4.6) -- cycle;

\draw[green!70!black, line width = 1.5] (2.8, .8) -- (1, -1) -- (-3,-1);
\draw[green!70!black, line width = 1.5] (2.8, 4.6) -- (1, 2.8);
\draw[green!70!black, line width = 1.5] (-1.2, 4.6) -- (-3, 2.8);
\draw [orange, line width = 1.5] (-3, -1) -- (-3, 2.8);
\draw [orange, line width = 1.5] (1, 2.8) -- (1,-1);
\draw [orange, line width = 1.5] (2.8, .8) -- (2.8, 4.6);
\draw[green!70!black, line width = 1.5] (2.8, .8) -- (1, -1) -- (-3,-1);
\draw[green!70!black, line width = 1.5] (2.8, 4.6) -- (1, 2.8) -- (-3, 2.8) -- (-1.2, 4.6) -- cycle;

\node at (-4,3.3) [fill = red!50!blue, opacity =.7] {ceiling};
\node at (-4.2 ,1) [draw = orange, line width=1, fill = blue!70!white, opacity =.7] {walls};
\node at (3.8 ,1.7) [fill = green!80!brown, opacity =.7] {seams};
\draw[very thick, ->] (-3.1, 3.3) to[bend left = 40] (-.5,3.8);
\draw[very thick, ->] (-3.5, .6) to[bend right = 40] (-1.1,.4);
\draw[very thick, ->] (3.8, 2.1) to[bend right = 30] (2.2,3.9);
\draw[very thick, ->] (3.8, 1.3) to[bend left = 30] (2.2,0);

\draw[very thick, ->] (5.2, 2) to (6.2,2);

\end{scope}

\begin{scope}[xshift = 11.5 cm, yshift = -7cm]

\node at (0,-.2) {\includegraphics[width=2.3in]{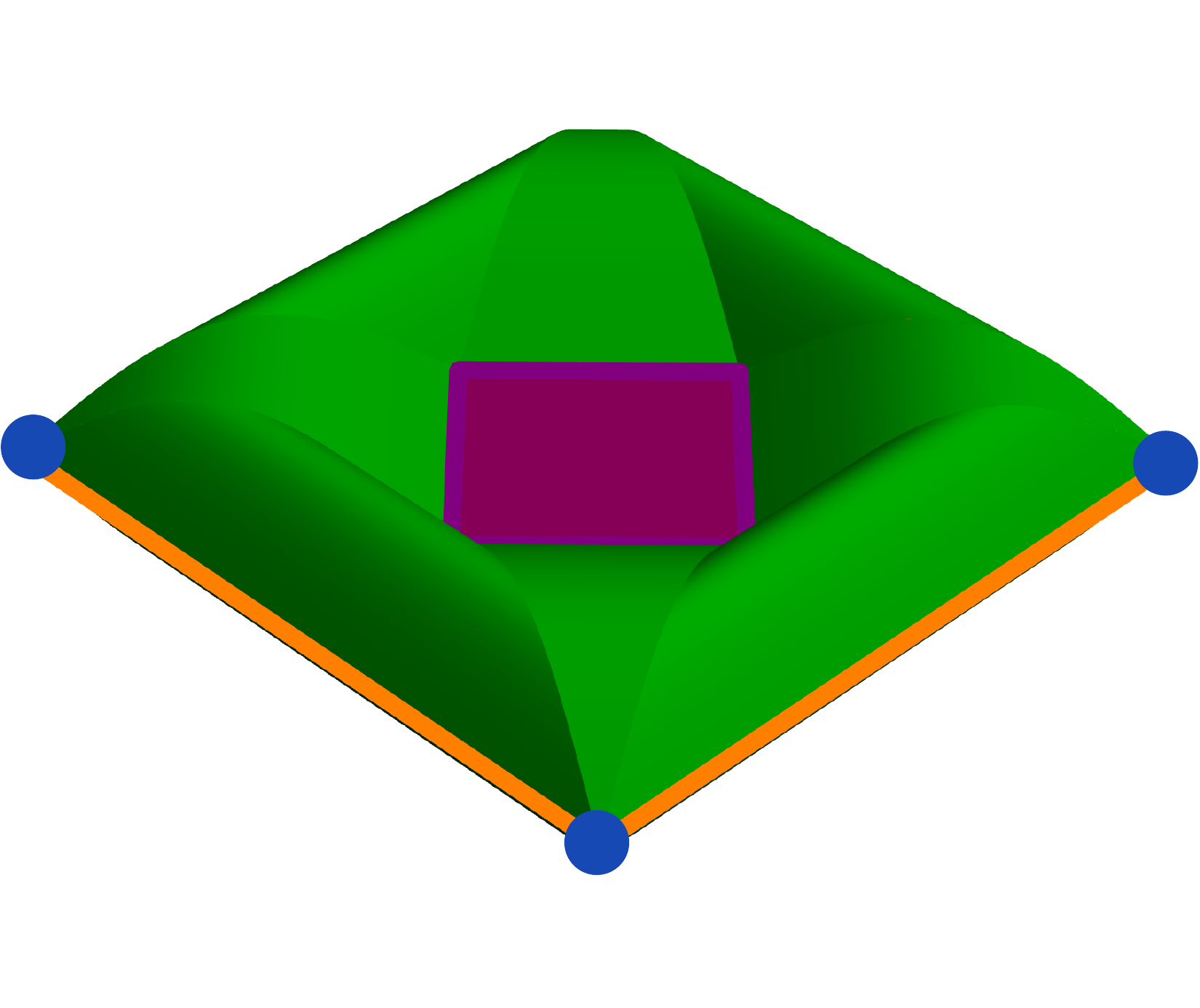}};

\end{scope}

\end{tikzpicture}
\caption{On the left we have the unit metric spheres in $\HR$ equipped with a Euclidean layered sup norm (above) and the homogeneous sup norm (below). On the right, we have the corresponding horofunction boundaries. Colors give correspondence between points on the sphere and blow-ups of the norm at those points.}\label{fig:button}
\end{figure}

\subsection{Blow-ups at interior ceiling/floor points}
First we consider points in the relative interior of the ceiling and floor of the unit sphere, i.e., points $p = (a,b,c) \in \partial B$ for which $\norm{(a,b)}_{V_1}<1$ and $\lambda \sqrt{|c|} = 1$.

\begin{lemma}{(Blow-ups at ceiling/floor points)}\label{lem:ceilingBU}
Let $p = (a, b, c)$ be a point in the interior of the ceiling or floor of $\partial B$, that is, such that $\norm{(a,b)}_{V_1} <1$ and $c = \pm 1/\lambda^2$. If $p$ is on the ceiling, then the blow-up of the norm at $p$ is given by
\[\phi^+_{(a,b)}(x, y, z) = \frac{\lambda^2}4 (-b\cdot x + a\cdot y).
\]
If $p$ is on the floor, then the blow-up of the norm at $p$ is given by
\[\phi^-_{(a,b)}(x, y, z) = \frac{\lambda^2}4 (b\cdot x - a\cdot y).\]
The set of all blow-up functions at interior ceiling is equal to the set of blow-up functions at interior floor points, and is homeomorphic to an open $2n$-dimensional ball.
\end{lemma}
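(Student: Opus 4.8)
The plan is to read off both formulas from a direct derivative computation and then analyze the resulting parametrization map. First I would expand $p\delta_t q$ for $q=(x,y,z)$, using $\delta_t q=(tx,ty,t^2z)$ and the Heisenberg group law, which gives horizontal part $\pi_1(p\delta_t q)=(a+tx,b+ty)$ and vertical coordinate
\[
(p\delta_t q)_z = c + \tfrac{t}{2}(a\cdot y - b\cdot x) + t^2 z .
\]
Since $\norm{(a,b)}_{V_1}<1$ strictly, continuity guarantees that $\norm{\pi_1(p\delta_t q)}_{V_1}<1$ while $\lambda\sqrt{|(p\delta_t q)_z|}\to1$ for all sufficiently small $t$, so the maximum defining $\norm{p\delta_t q}$ is attained by the vertical layer and $\norm{p\delta_t q}=\lambda\sqrt{|(p\delta_t q)_z|}$. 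The vertical layer norm is smooth away from $0$ and $c\neq0$, so Proposition~\ref{prop:pansuDiff} applies: the norm is Pansu differentiable at $p$ and the principal blow-up is the unique blow-up. On the ceiling $c=1/\lambda^2>0$, the argument of the square root stays positive for small $t$, and differentiating $\lambda\sqrt{(p\delta_t q)_z}$ at $t=0$ (where $\norm{p}=1$) yields exactly $\phi^+_{(a,b)}$; on the floor $c=-1/\lambda^2$ the sign of $(p\delta_t q)_z$ is reversed, and the identical computation produces $\phi^-_{(a,b)}$. Note that the $t^2z$ term is higher order and drops out, so the blow-up is linear and depends only on first-layer coordinates, as it must by Theorem~A.

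To see that the ceiling and floor families of blow-ups coincide as subsets of $\cC(\HHR)$, I would observe directly from the formulas that $\phi^-_{(a,b)}=\phi^+_{(-a,-b)}$. Because the unit ball of $\norm{\cdot}_{V_1}$ is centrally symmetric, $(a,b)$ ranges over $U:=\{(a,b):\norm{(a,b)}_{V_1}<1\}$ if and only if $(-a,-b)$ does, and hence
\[
\{\phi^-_{(a,b)}:(a,b)\in U\}=\{\phi^+_{(-a,-b)}:(a,b)\in U\}=\{\phi^+_{(a,b)}:(a,b)\in U\}.
\]
Thus it suffices to analyze the single family $\{\phi^+_{(a,b)}:(a,b)\in U\}$.

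Finally, for the topology I would study the parametrization map $\Phi:(a,b)\mapsto\phi^+_{(a,b)}$ on $U$. The map is the restriction of a linear map into $\cC(\HHR)$, and it is injective, since $\phi^+_{(a,b)}$ is the linear functional $(x,y,z)\mapsto\frac{\lambda^2}{4}(-b\cdot x+a\cdot y)$ whose coefficients recover $(a,b)$ uniquely. Its image lies in the $2n$-dimensional space of linear functionals of the horizontal coordinates, on which the compact-open topology agrees with the Euclidean topology of coefficients, so $\Phi$ is a homeomorphism onto its image. Since $U$ is a bounded open convex subset of $\R^{2n}$, it is homeomorphic to an open $(2n)$-ball, which completes the argument. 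I expect the only genuine subtlety to be this last topological identification---confirming that the compact-open topology restricted to this finite-dimensional family of functions is the Euclidean one, so that $\Phi^{-1}$ is continuous and $\Phi$ is a homeomorphism rather than merely a continuous bijection; the remainder reduces to the routine derivative computation and the central symmetry of the layer unit ball.
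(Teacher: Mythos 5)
Your proof is correct and takes essentially the same route as the paper's: the same direct expansion of $\norm{p\delta_t q}$ showing the vertical layer realizes the max for small $t$, the same observation that linearity of the resulting limit makes the principal blow-up the unique blow-up, and the same use of central symmetry of the unit ball to identify the ceiling and floor families. Your extra details---invoking Proposition~\ref{prop:pansuDiff} for differentiability and spelling out that the compact-open topology on this finite-dimensional family of linear functionals agrees with the Euclidean topology of coefficients---merely make explicit steps the paper leaves implicit.
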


\begin{proof}
Let $p = (a,b,c)$ be a point in the interior of the ceiling or floor of $\partial B$ as in the statement of the lemma. To compute the principal blow-up at $p$, we let $q = (x,y,z)$ and evaluate
\begin{align*}
\lim_{t\to0} \frac{\norm{p\delta_tq} - \norm{p}}{t} & = \lim_{t\to0}\frac{\max\left\{\norm{(a + tx, b + ty)}_{V_1}, \lambda\sqrt{|\pm \frac{1}{\lambda^2} + t^2z + \frac12 t(a\cdot y - b\cdot x)|}\right\} - 1}{t}\\
& = \lim_{t\to0}\frac{\lambda\sqrt{|\pm \frac{1}{\lambda^2} + t^2z + \frac12 t(a\cdot y - b\cdot x)|}- 1}{t}\\
& = \lim_{t\to0}\frac{\lambda\sqrt{\frac{1}{\lambda^2} \pm t^2z \pm \frac12 t(a\cdot y - b\cdot x)}- 1}{t}  = \pm \frac{\lambda^2}{4}(-b\cdot x + a\cdot y).
\end{align*}

As the principal blow-up function at $p$ is linear, this is the unique blow-up of the norm at $p$.
Since we are assuming that the norm $\norm{\cdot}_{V_1}$ is symmetric, we have that the set of interior ceiling point blow-ups $\{\frac{\lambda^2}{4}(-b\cdot x + a\cdot y)\mid \norm{(a,b)}_{V_1} < 1\}$ is equal to the set of interior floor point blow-ups $\{\frac{\lambda^2}{4}(b\cdot x - a\cdot y)\mid \norm{(a,b)}_{V_1} < 1\}$, and this set of functions is homeomorphic to an open $2n$-dimensional ball.
\end{proof}

We note that since the horofunction boundary is a closed set, the boundary points of this open $2n$-dimensional ball, that is, the functions $\{\frac{\lambda^2}4(-b\cdot x + a\cdot y)\mid \norm{(a,b)}_{V_1} =1\}$
also appear in the horofunction boundary. Indeed, these functions will appear as blow-ups of the norm at seam points of the unit sphere $\partial B$, as described in Section~\ref{sec:seam}.

\subsection{Blow-ups at interior wall points}

The blow-up functions at interior wall points, i.e., points $p=(a,b,c) \in \partial B$ for which $\norm{(a,b)}_{V_1} =1$ and $\lambda\sqrt{|c|} < 1$, will depend on the geometry of the polyhedral or smooth normed space $(\R^{2n}, \norm{\cdot}_{V_1})$. Indeed, as we consider a polysmooth layered sup norm on $\HHR$, our results on the blow-ups of the norm at interior wall points will rely upon the following result of Schilling.

\begin{theorem}[\cite{schilling-thesis}, Theorem 3.3.10]
The horofunction boundary of $(\R^n, \norm{\cdot})$, where $\norm{\cdot}$ is either polyhedral or smooth, is homeomorphic to $S^{n-1}$.
\end{theorem}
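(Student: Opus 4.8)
The plan is to identify $\bhor X$ with the boundary of the polar dual of the unit ball and then use that the boundary of a convex body in $\R^n$ is homeomorphic to $S^{n-1}$. Write $B$ for the unit ball of $\norm{\cdot}$ and let $Q^\circ$ be its polar dual. Taking the basepoint at the origin, every horofunction is a locally uniform limit $h(x)=\lim_n(\norm{x_n-x}-\norm{x_n})$ with $\norm{x_n}\to\infty$. Using the support-function representation $\norm{w}=\max_{u\in Q^\circ}\lv u,w\rv$ and writing $x_n=t_n v_n$ with $t_n\to\infty$ and $v_n\to v\in\partial B$, the maximizer in $\norm{x_n-x}$ concentrates, as $t_n\to\infty$, on the face of $Q^\circ$ exposed by $v$, namely the exposed dual $F^\circ$ of the face $F\subset\partial B$ whose relative interior contains $v$. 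First I would make this precise to show that for the radial approach $v_n\equiv v$ the horofunction is $h(x)=\max_{u\in F^\circ}\lv u,-x\rv$, and that by letting $v_n\to v$ from within a subface one obtains exactly the translates of such functions; conversely, each such limit is a genuine horofunction.

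In the smooth case this is immediate. Strict convexity and smoothness make $Q^\circ$ strictly convex with $C^1$ boundary, so every $v\in\partial B$ exposes a single dual point $u=G(v)\in\partial Q^\circ$ (the gradient, i.e.\ the Gauss map), and the exposed faces being points forces every horofunction to be the linear Busemann function $h_u(x)=\lv u,-x\rv$ with no translates surviving. The assignment $u\mapsto h_u$ is then a continuous bijection $\partial Q^\circ\to\bhor X$: distinct $u$ give distinct functionals, and the Gauss map is onto $\partial Q^\circ$. Since $\partial Q^\circ$ is compact and $\bhor X$ is Hausdorff, this is a homeomorphism, and radial projection gives $\partial Q^\circ\cong S^{n-1}$.

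In the polyhedral case $Q^\circ$ is a polytope, and the horofunctions organize into cells indexed by the faces of $Q^\circ$: for a face $E$ of dimension $k$, the admissible translates of $\max_{u\in E}\lv u,-x\rv$ form an open $k$-cell, the translation parameters running over the $k$ directions spanned by $E$ (as one sees by tracking sequences $v_n\to v$ that approach $v$ at a finite offset within the exposed face). These cells should glue along the face lattice of $Q^\circ$: sending an offset to infinity passes from the cell of $E$ to the cell of a proper face of $E$, matching the incidence relations of $\partial Q^\circ$. Assembling all cells yields a face-preserving continuous bijection onto $\partial Q^\circ$, hence again a homeomorphism by compactness, and $\partial Q^\circ\cong S^{n-1}$.

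The hard part will be the polyhedral gluing. One must (i) prove completeness of the list of candidate horofunctions above, so that no exotic limits arise, and (ii) verify that the topology of uniform-on-compacts convergence on $\bhor X$ matches the closure relations of the cells, so that the $k$-cell attached to $E$ limits precisely onto the lower cells attached to the faces of $E$. I expect both points to be cleanest via an almost-geodesic (detour-cost) criterion for horofunction boundaries of normed spaces, or else by direct limit computations of $\norm{x_n-x}-\norm{x_n}$ along carefully chosen sequences realizing each offset. Once these convergence statements are established, the resulting map $\bhor X\to\partial Q^\circ$ is a continuous bijection from a compact space to a Hausdorff space, hence a homeomorphism, completing the identification $\bhor X\cong\partial Q^\circ\cong S^{n-1}$ in both cases.
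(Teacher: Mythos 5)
This statement is quoted from Schilling's thesis and the paper gives no proof of its own, but its summary of Schilling's argument --- horofunctions $h_{E,u}$ indexed by proper faces $E$ of the polar dual $Q^\circ$ together with translation parameters $u \in T(E)^*$, assembled into a homeomorphism with $\partial Q^\circ \cong S^{n-1}$ --- is exactly the cell structure you propose. Your plan, including the two deferred convergence statements (completeness of the candidate list and matching of closure relations to the face lattice), follows the same route as the cited proof.
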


\begin{cor}{(Blow-ups at wall points)}
The collection of all blow-up functions at interior wall points, that is, at points $p = (a,b,c)$ such that $\norm{(a,b)}_{V_1} = 1$ and $\lambda\sqrt{|c|}<1$, is isomorphic to the boundary $\bhor(\R^{2n}, \norm{\cdot}_{V_1})$, which is homeomorphic $S^{2n-1}$.
\end{cor}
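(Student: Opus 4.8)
The plan is to show that the blow-up of the Heisenberg norm at an interior wall point ignores the vertical coordinate and records only the behavior of the first-layer norm, so that the collection of all wall-point blow-ups is identified with the full horofunction boundary $\bhor(\R^{2n},\norm{\cdot}_{V_1})$; Schilling's theorem then supplies the homeomorphism type $S^{2n-1}$.

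First I would carry out the blow-up computation for an arbitrary sequence. Fix a wall point $p=(a,b,c)$ with $\norm{(a,b)}_{V_1}=1$ and $\lambda\sqrt{|c|}<1$, and let $(p_n,\epsilon_n)$ be any sequence as in the Fisher--Nicolussi Golo framework (Section~\ref{sec:Pansudifferentiability}), with $p_n=(a_n,b_n,c_n)\to p$ and $\epsilon_n\to0$. Writing $q=(x,y,z)$, the vertical coordinate of $p_n\delta_{\epsilon_n}q$ equals $c_n+\epsilon_n^2 z+\tfrac{\epsilon_n}{2}(a_n\cdot y-b_n\cdot x)$ and converges to $c$, so $\lambda\sqrt{|(p_n\delta_{\epsilon_n}q)_z|}\to\lambda\sqrt{|c|}<1$, while $\norm{(a_n+\epsilon_n x,\,b_n+\epsilon_n y)}_{V_1}\to1$ and $\norm{(a_n,b_n)}_{V_1}\to1$. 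Because the inequality $\lambda\sqrt{|c|}<1$ is strict, these convergences are uniform for $q$ in any compact set, so for all large $n$ the first layer strictly dominates the maximum defining the norm, both at $p_n\delta_{\epsilon_n}q$ and at $p_n$. Hence the blow-up reduces to
\[
f(x,y,z)=\lim_{n\to\infty}\frac{\norm{(a_n,b_n)+\epsilon_n(x,y)}_{V_1}-\norm{(a_n,b_n)}_{V_1}}{\epsilon_n},
\]
which is exactly a blow-up of the abelian norm $\norm{\cdot}_{V_1}$ at $(a,b)\in\partial Q_1$ in the direction $(x,y)$. In particular $f$ is independent of $z$, and $f=g\circ\pi_1$ for the horofunction $g\in\bhor(\R^{2n},\norm{\cdot}_{V_1})$ determined by the sequences, where $\pi_1(x,y,z)=(x,y)$. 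Since the argument makes no differentiability assumption, it captures principal and non-principal blow-ups alike, hence every horofunction attached to both smooth and non-smooth points of $\partial Q_1$.

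Next I would exhibit the bijection and promote it to a homeomorphism. The computation shows every wall-point blow-up has the form $g\circ\pi_1$ with $g\in\bhor(\R^{2n},\norm{\cdot}_{V_1})$; conversely, any such $g$, realized at some $(a,b)\in\partial Q_1$ by sequences $(a_n,b_n),\epsilon_n$, lifts by setting $p_n=(a_n,b_n,0)$, which tends to the wall point $(a,b,0)$ and has blow-up $g\circ\pi_1$. Thus the pullback $\Phi\colon g\mapsto g\circ\pi_1$ is a bijection from $\bhor(\R^{2n},\norm{\cdot}_{V_1})$ onto the set of wall-point blow-ups. For the compact-open topologies, $\Phi$ is continuous because $\pi_1$ sends compact sets to compact sets, and its inverse is continuous because restricting a convergent net $g_n\circ\pi_1\to g\circ\pi_1$ to the slice $\R^{2n}\times\{0\}$, on which $\pi_1$ restricts to a homeomorphism onto $\R^{2n}$, recovers $g_n\to g$ uniformly on compact sets. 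Therefore the set of wall-point blow-ups is homeomorphic to $\bhor(\R^{2n},\norm{\cdot}_{V_1})$, which is homeomorphic to $S^{2n-1}$ by Schilling's theorem.

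The step demanding the most care is the domination argument: one must verify that the strict gap $\lambda\sqrt{|c|}<1$ survives along the perturbed curves $p_n\delta_{\epsilon_n}q$ locally uniformly in $q$, so that the reduction to the first-layer norm holds not merely pointwise but in the locally uniform sense needed for the limit to be a genuine horofunction. Once this is secured, the remaining identifications are formal bookkeeping with the compact-open topology.
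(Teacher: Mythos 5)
Your proof is correct and takes essentially the same route as the paper's: the strict gap $\lambda\sqrt{|c|}<1$ forces the first-layer norm to dominate the maximum along the blow-up, so every wall-point blow-up reduces to a blow-up of $\norm{\cdot}_{V_1}$ at $(a,b)\in\partial Q_1$, and Schilling's theorem then gives the homeomorphism type $S^{2n-1}$. You are in fact somewhat more thorough than the paper, whose proof only computes the principal blow-up at $p$ and then cites Schilling: you treat arbitrary sequences $(p_n,\epsilon_n)$, hence non-principal blow-ups as well, and you explicitly verify that $g\mapsto g\circ\pi_1$ is a homeomorphism for the compact-open topology, steps the paper leaves implicit.
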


\begin{proof}
Let $p = (a,b,c)$ be an interior wall point of $\partial B$ as in the statement of the corollary. To compute the principal blow-up at $p$, we let $q = (x,y,z)$ and evaluate
\begin{align*}
\lim_{t\to0} \frac{\norm{p\delta_tq} - \norm{p}}{t} & = \lim_{t\to0}\frac{\max\left\{\norm{(a + tx, b + ty)}_{V_1}, \lambda\sqrt{|\pm \frac{1}{\lambda^2} + t^2z + \frac12 t(a\cdot y - b\cdot x)|}\right\} - 1}{t}\\
& =  \lim_{t\to0}\frac{\norm{(a + tx, b + ty)}_{V_1} - 1}{t}.
\end{align*}
We note that this is equal to the principal blow-up of $\norm{\cdot}_{V_1}$ at the point $(a,b)\in \R^{2n}$. Our horofunctions are independent of the vertical coordinate $z$, and the result follows from \cite{schilling-thesis}.
\end{proof}

To describe these horofunctions in more detail, we will make use of ideas from convex geometry as defined in Section~\ref{sec:convex}. We remind the reader that the affine hull $\aff(S)$ is the smallest affine subset containing $S$, and the unique vector subspace of which $\aff(S)$ is a translation is denoted by $T(\aff(S))$, or simply $T(S)$.

Let $Q_1$ denote the unit ball of $(\R^{2n},\norm{\cdot}_{V_1})$. For both polyhedral and smooth norms, in \cite{schilling-thesis} Schilling gives the horofunction boundary as
\[\bhor(\R^{2n},\norm{\cdot}_{V_1}) = \{h_{E,u}\mid E\subseteq Q_1^\circ \text{ is a proper face, } u \in T(E)^*\},\]
where $h_{E,u}: X \to \R, \quad y \mapsto |u - y|_E - |u|_E$,
and for a convex set $C \subseteq X^*$, we have  $|x|_C := -\inf_{q \in C} \langle q | x\rangle$. Schilling then goes on to describe a homeomorphism between this set and the boundary of the polar dual of the unit ball $Q_1$.

We now reframe these results in terms of blow-ups and Pansu derivatives. If $d_e = \norm{\cdot}_{V_1}$ is smooth, then the horofunction boundary of $(\R^{2n},\norm{\cdot}_{V_1})$ is given by
\[\Delta = \{\pD d_e\vert_{(a,b)}(x,y) = h_{(a,b)^\circ}(x,y) = \alpha\cdot x + \beta \cdot y \mid (\alpha, \beta) = (a,b)^\circ, (a,b) \in \partial Q_1\},
\]
where $(a,b)^\circ$ denotes the exposed dual of the extreme point $(a,b)$, as defined in Section~\ref{sec:convex}.

When $\norm{\cdot}_{V_1}$ is polygonal, explicitly writing down all of the horofunctions takes a bit more work. Using Schilling's notation, if we let $\{F_i\}_i$ be the finite set of faces of $Q_1$ of any dimension to which $(a,b)$ belongs, then the set of all blow-ups of the norm at $p= (a,b)$ is given by
\[\mathcal W_{(a,b)} = \bigcup_{F_i \ni (a,b)}\{h_{F_i^\circ, u}\mid u \in V(F_i)^\perp \}.
\]

We can also describe the horofunction boundary in another way. As stated above, the horofunction boundary is homeomorphic to $\partial Q_1^\circ$, which is itself a polyhedron. This dual polyhedron can be described by the union of finitely many $(2n-1)$-dimensional faces, each of which is the exposed dual of a vertex of $Q_1$. In the horofunction boundary, each of these $(2n-1)$-dimensional faces is the set of all blow-ups, that is, the set of both finite and infinite translates of the principal blow-up, of $d_e = \norm{\cdot}_{V_1}$ at a vertex of $Q_1$. 
The principal blow-up at a vertex will be a piecewise-linear function defined in terms of the codimension-1 facets which intersect at the vertex. Indeed, each facet $F_i$  of this polyhedron is defined by a linear equation $\alpha \cdot x + \beta \cdot y = 1$. The horofunctions in $\bhor (\R^n, \norm{\cdot}_{V_1})$ are linear and piecewise-linear functions built out of the finitely-many facet blow-up functions $\{h_{F_i^\circ}(x,y) = \alpha \cdot x + \beta \cdot y\mid F_i \text{ is a facet of } Q_1\}.$ 

\subsection{Disjointness of ceiling/floor and wall blow-up functions}\label{sec:degenerate}

We'd now like to examine if the two families of horofunctions we have found so far are disjoint or not. As the closure of blow-ups at interior ceiling and floor points, we have 
\[\Phi = \left\{\frac{\lambda^2}4(-b\cdot x + a\cdot y)\mid \norm{(a,b)}_{V_1} \leq 1\right\} = \overline{\{\phi^\pm_{(a,b)}\mid \norm{(a,b)}_{V_1} < 1\}}.
\]  As the blow-ups at interior wall points, we get $\Delta$, a copy of $\bhor(\R^{2n}, \norm{\cdot}_{V_1})$. (As discussed in the previous section, it is not as simple to give an explicit description of the functions in $\Delta$.) In the general case, which we will call the {\em separated} case, these two families are disjoint, and the blow-up functions coming from seam points will join the families to give us the boundary described in Theorem~B. There are examples, however, where these families are not disjoint, such as the one described in Section~\ref{sec:deg_example}. If we fix our norm $\norm{\cdot}_{V_1}$ on the first layer of $\HHR$, we can always find a open interval's worth of values positive scalars $\lambda$ such that the layered sup quasi-norm $\max \{\norm{(x,y)}_{V_1}, \lambda \sqrt{|z|}\}$ induces a genuine metric and whose boundary is separated.

\begin{lemma}
For every norm $\norm{\cdot}_{V_1}$ on $\R^{2n}$, there exists a positive $\lambda_0 < 1$ such that for all positive $\lambda \leq \lambda_0$, the layered sup quasi-norm $|(x,y,z)| = \max \{\norm{(x,y)}_{V_1}, \lambda \sqrt{|z|}\}$ on $\HHR$ is a genuine norm. Additionally, there exists a positive $\lambda_1 \leq \lambda_0$ such that for all positive $\lambda \leq \lambda_1$, the horofunction boundary of $\HHR$ equipped with the layered sup norm is separated.
\end{lemma}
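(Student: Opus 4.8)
The plan is to obtain both assertions as consequences of taking $\lambda$ sufficiently small: the first through a direct subadditivity estimate that yields an explicit, downward-closed threshold, and the second through the observation that the ceiling/floor family $\Phi$ collapses to the zero functional as $\lambda\to 0$ while the wall family $\Delta$ does not depend on $\lambda$ at all. Both thresholds will come out explicit, which is exactly what the "for all $\lambda\le\lambda_0$" and "for all $\lambda\le\lambda_1$" form of the statement requires.

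For the norm claim, write $p=(x,y,z)$, $q=(x',y',z')$, and $z''=z+z'+\tfrac12(x\cdot y'-x'\cdot y)$. Since $|pq|$ is the maximum of a horizontal term $\norm{(x+x',y+y')}_{V_1}$ and a central term $\lambda\sqrt{|z''|}$, and the horizontal term is already $\le |p|+|q|$ by the triangle inequality for $\norm{\cdot}_{V_1}$, it suffices to bound the central term. I would estimate
\[
|z''|\le |z|+|z'|+\tfrac12|x\cdot y'-x'\cdot y|\le |z|+|z'|+\tfrac{M}{2}\norm{(x,y)}_{V_1}\norm{(x',y')}_{V_1},
\]
where $M$ is the operator norm of the symplectic form $((x,y),(x',y'))\mapsto x\cdot y'-x'\cdot y$ with respect to $\norm{\cdot}_{V_1}$, finite because all norms on $\R^{2n}$ are equivalent. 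Setting $A=|p|$, $B=|q|$ and using $\lambda^2|z|\le A^2$, $\norm{(x,y)}_{V_1}\le A$ (and likewise for $q$), this gives $\lambda^2|z''|\le A^2+B^2+\tfrac{\lambda^2 M}{2}AB$, which is $\le (A+B)^2$ precisely when $\tfrac{\lambda^2 M}{2}\le 2$. Hence for every $\lambda\le \lambda_0:=\min\{1/2,\,2/\sqrt{M}\}<1$ the central term is subadditive, so $|\cdot|$ satisfies the triangle inequality and is a genuine norm. This recovers Guivarc'h's Lemma~\ref{guivarch} in the two-layer case of $\HHR$, but with the explicit downward-closed bound the statement demands.

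For separation, recall from Lemma~\ref{lem:ceilingBU} and the corollary on wall blow-ups that $\Phi=\{(x,y,z)\mapsto \tfrac{\lambda^2}{4}(-b\cdot x+a\cdot y):\norm{(a,b)}_{V_1}\le 1\}$ and $\Delta\cong\bhor(\R^{2n},\norm{\cdot}_{V_1})$, both regarded as $z$-independent functions on $\HHR$. Every element of $\Phi$ is a linear functional whose Euclidean gradient $\tfrac{\lambda^2}{4}(-b,a)$ has norm at most $\tfrac{\lambda^2}{4}R$, with $R=\sup\{\norm{(a,b)}_2:\norm{(a,b)}_{V_1}\le 1\}<\infty$; these gradients tend to $0$ as $\lambda\to 0$, whereas $\Delta$ is fixed. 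A linear function can agree with an element of $\Delta$ only if that element is itself linear, and from Schilling's description $h_{E,u}$ \cite{schilling-thesis} the linear elements of $\Delta$ are exactly the support functionals $y\mapsto \langle q_0\mid y\rangle$ as $q_0$ ranges over the vertices of $Q_1^\circ$ (polyhedral case) or over all of $\partial Q_1^\circ$ (smooth case). In either case these functionals have gradients of Euclidean norm bounded below by some $\rho>0$ (a finite set avoiding the origin, respectively the compact dual unit sphere). Choosing $\lambda_1=\min\{\lambda_0,\ \sqrt{\rho/R}\}$ then forces $\tfrac{\lambda^2}{4}R<\rho$ for all $\lambda\le\lambda_1$, so no gradient appearing in $\Phi$ can match one in the linear part of $\Delta$; hence $\Phi\cap\Delta=\emptyset$ and the boundary is separated.

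The two estimates above are routine; the step I expect to require the most care is the polyhedral case of separation. There one must confirm that a genuinely piecewise-linear wall horofunction can never coincide with a linear ceiling/floor functional, so that the only candidate coincidences are with the finitely many vertex-dual functionals, and one must verify that the lower bound $\rho$ on their gradients is uniform. Once this is settled, both thresholds $\lambda_0$ and $\lambda_1$ are explicit and downward-closed, which immediately gives the stated "for all $\lambda\le\lambda_0$" and "for all $\lambda\le\lambda_1$" conclusions.
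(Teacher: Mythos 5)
Your proposal is correct, and its two halves relate differently to the paper's proof. For the first statement, the paper simply invokes Guivarc'h's Lemma~\ref{guivarch} and observes that its proof is downward-closed in $\lambda$; you instead prove subadditivity directly, bounding the central term via $\lambda^2|z''|\le A^2+B^2+\tfrac{\lambda^2M}{2}AB\le(A+B)^2$ with $M$ the operator norm of the symplectic form relative to $\norm{\cdot}_{V_1}$. This is a genuinely different and self-contained route: it buys an explicit threshold $\lambda_0=\min\{1/2,\,2/\sqrt{M}\}$, at the cost of being special to the two-layer structure of $\HHR$, whereas Guivarc'h's lemma covers arbitrary stratified groups. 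For the second statement, your argument is essentially the paper's, in quantitative form: the paper notes that $\Phi(\lambda)$ is a scaled copy of $Q_1$ in $(\R^{2n})^*$ shrinking to the origin, that the linear part $\Delta\cap(\R^{2n})^*$ is a closed set not containing the origin (finitely many functionals dual to facets of $Q_1$ in the polyhedral case, all of $\partial Q_1^\circ$ in the smooth case), and then separates them by an open neighborhood $U$ of $0$; your constants $\rho$ and $R$ just make that neighborhood explicit, and your identification of the linear elements of $\Delta$ with vertices of $Q_1^\circ$ matches the paper's facet count by polar duality. The one step you flagged as delicate --- that a wall horofunction $h_{E,u}$ with $\dim E\ge 1$ is genuinely nonlinear, being a supremum of at least two distinct linear functionals that are each attained --- is indeed the only point where the polyhedral case needs an argument, and it goes through as you expect.
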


\begin{proof}
For the first statement, Guivarc'h's result cited above as Lemma~\ref{guivarch} tells us that there exists a positive $\lambda_0$ such that $\max \{\norm{(x,y)}_{V_1}, \lambda_0 \sqrt{|z|}\}$ is a genuine norm on $\HHR$. The proof of Guivarc'h's lemma makes clear that any positive $\lambda\leq \lambda_0$ also suffices.

For the second statement, let $\Phi(\lambda) = \left\{\frac{\lambda^2}4(-b\cdot x + a\cdot y)\mid \norm{(a,b)}_{V_1} \leq 1\right\}$, and let $\Delta =\bhor(\R^{2n}, \norm{\cdot}_{V_1})$. We would like to find a $\lambda_1$ such that for all positive $\lambda < \lambda_1$, we have
$\Phi(\lambda) \cap \Delta = \emptyset$. Note that the family $\Phi$ is made up entirely of linear functions, and in fact it is a scaled and rotated copy of $Q_1$ in $(\R^{2n})^*$. On the other hand, $\Delta$ is independent of the scalar $\lambda$ and will contain nonlinear functions if $\norm{\cdot}_{V_1}$ is polyhedral. If $\norm{\cdot}_{V_1}$ is polyhedral, then there is a finite number of linear functions in $\Delta$, one for each of the codimension-1 faces of the polyhedron $Q_1$. If $\norm{\cdot}_{V_1}$ is smooth, then all functions in $\Delta$ are linear. Either way, the set of linear functions $\Delta \cap (\R^{2n})^*$ is a closed set which does not contain the origin. Hence there exists an open neighborhood $U$ of $0 \in (\R^{2n})^*$ which is disjoint from $\Delta \cap (\R^{2n})^*$. Let $\lambda_1'>0$ be a scalar such that $\Phi(\lambda_1') \subseteq U$. Then if $\lambda_1 = \min\{\lambda_0, \lambda_1'\}$, then $\Phi(\lambda) \cap \Delta = \emptyset$ for all $\lambda \leq \lambda_1$.
\end{proof}

\subsection{Blow-ups at seam points}\label{sec:seam}

Finally, we must examine the blow-ups of the polysmooth layered sup norm at points on the seams between the wall regions and the ceiling or floor regions of the unit sphere $\partial B$. So far in the separated case, we have a $2n$-dimensional ball of horofunctions, i.e., blow-ups, corresponding to interior ceiling or floor points and a $(2n-1)$-dimensional sphere of horofunctions corresponding to interior wall points. In the right panel of Figure~\ref{fig:button}, these are given by the purple disk and the orange circle, respectively.

Since a seam point $p$ is on the border between two regions on which the layered sup norm is measured in different ways, the principal blow-up function of the norm at $p$ will be a piecewise-defined function with partial functions corresponding to wall point blow-ups and ceiling point blow-ups.

\begin{lemma}{(Blow-ups at ceiling seam points)}
The set of blow-up functions at ceiling seam points, that is, at points $p = (a,b,c)$ such that $\norm{(a,b)}_{V_1} = 1$ and $c = \frac1{\lambda^2}$, is homeomorphic to a closed spherical shell $S^{2n-1}\times I$, for $I$ a closed interval. One boundary sphere of the spherical shell is the boundary sphere of $\Phi=\{\frac{\lambda^2}4(-b\cdot x + a\cdot y)\mid \norm{(a,b)}_{V_1} \leq 1\}$, and the other boundary sphere is $\Delta = \bhor(\R^{2n},\norm{\cdot}_{V_1})$.
\end{lemma}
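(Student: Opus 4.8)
The plan is to compute directly the full set of blow-ups at a fixed ceiling seam point $p=(a,b,c)$ with $\norm{(a,b)}_{V_1}=1$ and $c=1/\lambda^2$, and then to let $p$ range over the whole seam. For sequences $p_n=(a_n,b_n,c_n)\to p$ and $\epsilon_n\to0^+$, write $W_n(q)=\norm{(a_n+\epsilon_n x,\,b_n+\epsilon_n y)}_{V_1}$ and $C_n(q)=\lambda\sqrt{\,|c_n+\epsilon_n^2 z+\tfrac12\epsilon_n(a_n\cdot y-b_n\cdot x)|\,}$, so that $\norm{p_n\delta_{\epsilon_n}q}=\max\{W_n(q),C_n(q)\}$ and $\norm{p_n}=\max\{W_n(0),C_n(0)\}$. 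Since $\epsilon_n>0$ the difference quotient defining the blow-up is the maximum of the two difference quotients, and whenever the blow-up exists both inner limits exist and the blow-up is their maximum. The wall quotient $\frac{W_n(q)-W_n(0)}{\epsilon_n}$ converges (along a subsequence) to a wall horofunction $h\in\Delta=\bhor(\R^{2n},\norm{\cdot}_{V_1})$, exactly as in the Corollary on wall points, while the ceiling quotient $\frac{C_n(q)-C_n(0)}{\epsilon_n}$ converges, as in Lemma~\ref{lem:ceilingBU}, to $\phi_{(a,b)}(x,y)=\tfrac{\lambda^2}{4}(-b\cdot x+a\cdot y)$ independently of the sequence. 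The leftover terms $\frac{W_n(0)-\norm{p_n}}{\epsilon_n}$ and $\frac{C_n(0)-\norm{p_n}}{\epsilon_n}$ are nonpositive, at least one vanishes, and their limits record a single offset. Thus every seam blow-up has the form
\begin{equation*}
f = \max\{\,h - s^{+},\; \phi_{(a,b)} - s^{-}\,\},\qquad s\in[-\infty,+\infty],
\end{equation*}
where $s^{+}=\max\{s,0\}$ and $s^{-}=\max\{-s,0\}$; conversely Theorem~\ref{piecetogether} together with Proposition~\ref{prop5e8c4f94} shows that every such offset is realized by some admissible sequence, so the parameter space is the closed interval $I=[-\infty,+\infty]$.

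Next I would read off the two ends of the interval. Sending $s\to+\infty$ drives the wall piece to $-\infty$ and leaves $f=\phi_{(a,b)}$; as $(a,b)$ ranges over $\{\norm{(a,b)}_{V_1}=1\}$ these exhaust the relative boundary of $\Phi=\{\tfrac{\lambda^2}{4}(-b\cdot x+a\cdot y):\norm{(a,b)}_{V_1}\le1\}$, a $(2n-1)$-sphere. Sending $s\to-\infty$ drives the ceiling piece to $-\infty$ and leaves $f=h$; these exhaust $\Delta\cong S^{2n-1}$ by Schilling's theorem. For finite $s$ the function $f$ genuinely depends on both $h$ and $\phi_{(a,b)}$, so the two boundary spheres are joined by honest interpolating intervals rather than being glued directly.

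To get the homeomorphism I would assemble the incidence data $Z=\{(h,(a,b)):(a,b)\in\partial Q_1,\; h\in\mathcal W_{(a,b)}\}$ of compatible wall-horofunction / seam-point pairs, and construct the map $Z\times[-\infty,+\infty]\to\Sigma$, $(h,(a,b),s)\mapsto\max\{h-s^{+},\phi_{(a,b)}-s^{-}\}$, where $\Sigma$ denotes the set of all ceiling seam blow-ups. This map is continuous in the compact-open topology, surjective by the computation above, and injective for finite $s$; at $s=+\infty$ it factors through the projection $Z\to\partial\Phi$ forgetting $h$, and at $s=-\infty$ through $Z\to\Delta$ forgetting $(a,b)$. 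Since $Z$ is compact and the target lies in the Hausdorff space $\cC(\HHR)$, a continuous bijection onto its image is a homeomorphism, so $\Sigma$ is the double mapping cylinder of $\partial\Phi\leftarrow Z\to\Delta$.

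The main obstacle is verifying that this double mapping cylinder is actually a product $S^{2n-1}\times I$ rather than a more singular gluing. In the smooth case this is immediate: the Gauss map identifies $\Delta\cong\partial Q_1^\circ$ with $\partial Q_1\cong\partial\Phi$, so $Z$ is a graph over either sphere and the parametrization is literally $S^{2n-1}\times I$. In the polyhedral case the two ends are dual polytopes, and the projections $Z\to\partial\Phi$ and $Z\to\Delta$ collapse complementary families of faces (a face $F\subset Q_1$ against its exposed dual $F^{\circ}$), so the product is not on the nose; here I would use the explicit face-by-face description of $\bhor(\R^{2n},\norm{\cdot}_{V_1})$ together with the exposed-dual correspondence of Section~\ref{sec:convex} to show that $Z\cong S^{2n-1}$ and that collapsing the fibers at either end returns a sphere, whence the cylinder is a trivial bundle over $I$. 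The separated hypothesis enters through $\Phi\cap\Delta=\emptyset$, which is what keeps the interpolating functions distinct and the parametrizing map injective for finite $s$.
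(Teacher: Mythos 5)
Your analytic core is correct and matches the substance of the paper's computation: every blow-up at a ceiling seam point $p=(a,b,1/\lambda^2)$ has the form $\max\{h - s^{+},\, \phi^+_{(a,b)} - s^{-}\}$ with $h$ a wall horofunction compatible with $(a,b)$ and $s\in[-\infty,+\infty]$, and the two infinite values of $s$ recover $\phi^+_{(a,b)}$ and $h$ as the ends of the interval. Your route to this is genuinely different from the paper's in two ways. First, by splitting the difference quotient into the wall part $W_n$ and the ceiling part $C_n$, you make realizability of an arbitrary offset nearly automatic: the vertical coordinate $c_n$ is a free parameter independent of $(a_n,b_n)$, so it can be used to dial in any $s$; the paper instead works with blow-ups of the dilation cones and must verify a transversality/linear-independence hypothesis by hand (computing the Pansu derivatives $\pD G_j|_p$ of the ceiling-defining functions $G_j=(\alpha_j\cdot x+\beta_j\cdot y)^2-\lambda^2 z$ and evaluating at $-p$ to get $-2$). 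Your decomposition sidesteps that computation, though you should state explicitly that this independence is what licenses the appeal to Proposition~\ref{prop5e8c4f94}. Second, you organize the global answer as a double mapping cylinder over the incidence variety $Z=\{(h,(a,b))\}$, where the paper argues face by face: it fixes the minimal face $F\ni(a,b)$ of dimension $k$, shows the blow-up set at $p$ is the $(2n-k)$-dimensional join of $\phi^+_{(a,b)}$ with $\mathcal W_{(a,b)}$, and assembles these joins into the shell.

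However, in the polyhedral case your final step has a genuine gap. You assert that because $Z\cong S^{2n-1}$ and because collapsing the fibers at either end of $Z\times I$ returns a sphere, the double mapping cylinder ``is a trivial bundle over $I$.'' That inference is not valid: knowing that a quotient of $S^{2n-1}\times I$, obtained by collapsing a decomposition of each boundary sphere, yields spheres at both ends does not by itself imply the total quotient is homeomorphic to $S^{2n-1}\times I$. The fibers being collapsed are faces $F$ of $Q_1$ at one end and exposed duals $F^\circ$ at the other; these are cell-like, and the statement you want is true, but it needs real input---either an explicit PL/convex-geometric homeomorphism exploiting the face/exposed-dual duality, or cell-like approximation theory to promote the quotient maps to near-homeomorphisms and hence each half of the cylinder to a product. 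Likewise, your claim that $Z\cong S^{2n-1}$ (the incidence sphere $\bigcup_F F\times F^\circ$ of a polytope and its polar dual) is true but is itself a nontrivial assertion requiring proof. To be fair, the paper's own conclusion at this point (``these joins come together to create a spherical shell'') is also brief, but it rests on the transversality computation and dimension count already established; your version concentrates all of the difficulty into a single topological claim which, as written, is unsupported.
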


\begin{proof}
Suppose $p = (a,b,c)$ is a ceiling seam point with $\norm{(a,b)}_{V_1} = 1$ and $c = \frac1{\lambda^2}$. Whether $\norm{\cdot}_{V_1}$ is smooth or polyhedral, the principal blow-up of the layered sup norm at $p$ will have the ceiling function $\phi^+_{(a,b)}$, as defined in Lemma~\ref{lem:ceilingBU}, as a partial function. Any two piecewise-defined functions can only be equal if they are defined by the same partial functions. Since $\phi^+_{(a,b)} = \phi^+_{(a',b')}$ if and only if $(a,b) = (a',b')$ and since we are assuming the boundary is separated, meaning no other partial functions in the blow-up are equal to $\phi^+_{(a,b)}$, the principal blow-up functions at points on the ceiling seam are pairwise distinct. Via the same argument, any two finite translates of the principal blow-ups at different ceiling seam points must also be distinct.

If $\norm{\cdot}_{V_1}$ is smooth, then the principal blow-up of the homogeneous norm at $p$ is a piecewise-linear function with partial functions $\phi^+_{(a,b)}$ and $h_{(a,b)^\circ}$, where the subdomains of these two partial functions are uniquely determined by requiring that the blow-up function is continuous. Indeed, these linear functions are distinct since
$\phi^+_{(a,b)}(a,b) = \frac{\lambda^2}4(-b\cdot a + a \cdot b) = 0$, while by the definition of the exposed dual, $h_{(a,b)^\circ}(a,b) = 1.$
Since there are only two partial functions in this principal blow-up function, there is a 1-parameter family of translates of this function, with limiting infinite translates given by $\phi^+_{(a,b)}$ and $h_{(a,b)^\circ}$. Hence we have a one-to-one correspondence between the boundary ceiling blow-up functions $\{\phi^+_{(a,b)}\mid \norm{(a,b)}_{V_1} = 1\}$ and the wall blow-up functions $\{h_{(a,b)\circ}\mid \norm{(a,b)}_{V_1} = 1\}$, and we get a 1-parameter family of blow-ups connecting each pair. Since $\phi^+_{(a,b)}$ and $h_{(a,b)\circ}$ vary continuously with $a$ and $b$, this gives us a set of blow-up functions which is homeomorphic to $S^{2n-1}\times I$.

When $Q_1$ is polyhedral, the construction of the spherical shell is more complex. Let $F$ is the face of $Q_1$ of minimal dimension to which $(a,b)$ belongs with $\dim F = k$. We know from above that the family of wall blow-up functions $\mathcal W_{(a,b)}$ associated to $(a,b)$ is $(2n-1-k)$-dimensional. We want show that in the principal blow-up at our ceiling seam point $p = (a,b,c)$, the subdomain of the partial function corresponding to the ceiling dilation cone is transverse to the subdomains of the partial functions corresponding to the wall dilation cone. Knowing this tells us that the ceiling seam principal blow-up corresponding to $(a,b)$ has one more dimension of translates than the corresponding principal wall blow-up.

Indeed, consider the norm $\norm{\cdot}_{V_1}$ locally near the point $(a,b) \in \R^{2n}$. It is piecewise-linear with subdomains which are linear cones based at $(a,b)$. That is, the boundaries of these subdomains are half-hyperplanes through the origin which all intersect at $(a,b)$ and hence along the line connecting the origin to $(a,b)$. The principal blow-up of $\norm{\cdot}_{V_1}$ at $(a,b)$, therefore, is invariant under translates in the direction of $(a,b)$.

The point $(a,b)$ is in intersection of finitely many facets $\{F_j\}_j,$ each defined by the unique hyperplane $H_j = \{(x,y)\mid \alpha_j \cdot x + \beta_j\cdot y = 1\}$. If we let $G_j (x,y,z) = (\alpha_j\cdot x + \beta_j\cdot y)^2 - \lambda^2z$, then the ceiling dilation cone, i.e., the set of points in the dilation cone of the closed ceiling face of $\partial B$, is defined locally near $p$ by the locus $ \cap_j\{G_j \leq 0\}$. By Proposition~\ref{prop5e8c4f94} and Theorem~\ref{piecetogether}, the principal blow-up of the homogeneous norm at $p$, then, has the partial function $\phi^+_{(a,b)}$ defined on the subdomain $\cap_j \{q \in \HHR \mid \pD G_j\vert_p(q) \leq 0\}$. Computing these Pansu derivatives, we see
\begin{align*}
\pD G_j\vert_p(x,y,z) &= \lim_{t \to 0} \frac{(\alpha_j (a + tx) + \beta_j\cdot (b + ty))^2 - \lambda^2(c + t^2z + \frac12t (a\cdot y - b\cdot x))}{t}\\
& = \left(2\alpha_j + \frac{\lambda^2}2 b\right)\cdot x + \left(2\beta_j - \frac{\lambda^2}2 a\right) \cdot y,
\end{align*}
and evaluated at $-p$, we see $\pD G_j\vert_p(-a, -b, -c) = -2$. Hence $-p$ is in the interior of the subdomain for the partial function $\phi^+_{(a,b)}$. We have, then, the boundaries of the subdomain for this ceiling blow-up function are transverse to the line connecting the origin to $p$, and we, therefore, have a new direction in which to take translates of the principal blow-up.

Again, we were assuming $F$ was the face of $Q_1$ of minimal dimension to which $(a,b)$ belongs with $\dim F = k$ and that the family of wall blow-up functions $\mathcal W_{(a,b)}$ associated to $(a,b)$ was $(2n-1-k)$-dimensional. We can conclude, therefore, that the set of blow-up functions corresponding to the ceiling seam point $p$ is $(2n-k)$-dimensional, which we can think of as the join of $\phi^+_{(a,b)}$ and the family $\mathcal W_{(a,b)}$. As we let $(a,b)$ vary along the $k$-dimensional face $F$, we get a $2n$-dimensional family of blow-ups, described as the join of $\{\phi^+_{(a,b)} \mid (a,b) \in F\}$ and $\mathcal W_{(a,b)}$ (which is equal to $\mathcal W_{(a',b')}$ if $(a,b)$ and $(a',b')$ belong to all the same faces). These $2n$-dimensional joins come together to create a $2n$-dimensional spherical shell.

\end{proof}

\begin{lemma}{(Blow-ups at floor seam points)}
The set of blow-up functions at floor seam points, that is, at points $p = (a,b,c)$ such that $\norm{(a,b)}_{V_1} = 1$ and $c = -\frac1{\lambda^2}$, also is homeomorphic to a closed $2n$-dimensional spherical shell with the same boundary spheres as the spherical shell of blow-ups at ceiling points. The set of blow-ups at floor seam points is disjoint from the set of blow-up functions at ceiling seam points except along their shared boundary spheres.
\end{lemma}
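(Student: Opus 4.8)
The first claim follows by rerunning the proof of the preceding lemma on ceiling seam points with the obvious sign changes, so I would only record the modifications. At a floor seam point $p=(a,b,c)$ with $\norm{(a,b)}_{V_1}=1$ and $c=-1/\lambda^2$, the principal blow-up of the layered sup norm has $\phi^-_{(a,b)}$ as one of its partial functions, and the argument that these principal blow-ups (and their finite translates) are pairwise distinct is identical: $\phi^-_{(a,b)}=\phi^-_{(a',b')}$ forces $(a,b)=(a',b')$, and separation guarantees no other partial function equals $\phi^-_{(a,b)}$. When $\norm{\cdot}_{V_1}$ is smooth the blow-up has exactly the two partial functions $\phi^-_{(a,b)}$ and $h_{(a,b)^\circ}$, distinct because $\phi^-_{(a,b)}(a,b)=\frac{\lambda^2}{4}(b\cdot a-a\cdot b)=0$ while $h_{(a,b)^\circ}(a,b)=1$; this yields a one-parameter family of translates and hence an $S^{2n-1}\times I$.

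For the polyhedral case the only computation I would redo is the Pansu derivative of the functions cutting out the floor dilation cone. Near $p$ that cone is $\bigcap_j\{G_j^-\le 0\}$ with $G_j^-(x,y,z)=(\alpha_j\cdot x+\beta_j\cdot y)^2+\lambda^2 z$, and one finds $\pD G_j^-|_p(x,y,z)=(2\alpha_j-\frac{\lambda^2}{2}b)\cdot x+(2\beta_j+\frac{\lambda^2}{2}a)\cdot y$. Evaluating at $-p$ gives $\pD G_j^-|_p(-p)=-2<0$, so $-p$ lies in the interior of the subdomain of the partial function $\phi^-_{(a,b)}$; exactly as in the ceiling case this transversality supplies one extra dimension of translations, and letting $(a,b)$ range over a minimal face of $Q_1$ assembles the blow-ups into a $(2n)$-dimensional spherical shell. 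Its two boundary spheres are $\Delta=\bhor(\R^{2n},\norm{\cdot}_{V_1})$ and the relative boundary of $\Phi$; the latter because $\phi^-_{(a,b)}=\phi^+_{(-a,-b)}$ and $Q_1$ is centrally symmetric, so $\{\phi^-_{(a,b)}:\norm{(a,b)}_{V_1}=1\}$ is the same set as the ceiling boundary $\{\phi^+_{(a,b)}:\norm{(a,b)}_{V_1}=1\}$. These are the same two boundary spheres as for the ceiling shell.

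For the disjointness claim I would argue by contradiction. Suppose a ceiling seam blow-up $f$ equals a floor seam blow-up $g$. If either is a single affine function or a pure wall blow-up, it lies on a shared boundary sphere ($\partial\Phi$ or $\Delta$), which is the permitted intersection. Otherwise both are genuinely piecewise-linear: $f$ attains the slope $\phi^+_{(a,b)}$ and $g$ attains $\phi^-_{(a',b')}$. Since the norm is separated, $\phi^+_{(a,b)}\in\partial\Phi$ is not a wall slope, so on the open region where $f$ has gradient $\phi^+_{(a,b)}$ the function $g$ must have the same gradient; this forces $\phi^+_{(a,b)}=\phi^-_{(a',b')}$, hence $(a',b')=(-a,-b)$. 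Consequently the region of $f$ with slope $\phi^+_{(a,b)}$ and the region of $g$ with slope $\phi^-_{(-a,-b)}$ must coincide.

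The main obstacle is that, for blow-ups in the interior of a shell, these linearity regions are \emph{translated} cones (the non-principal blow-ups are translates of the principal one by Theorem~\ref{piecetogether}), so I cannot compare the regions directly. The fix is to pass to recession cones, which are translation invariant. The recession cone of the $\phi^+_{(a,b)}$-region is $D^+=\bigcap_j\{(2\alpha_j+\frac{\lambda^2}{2}b)\cdot x+(2\beta_j-\frac{\lambda^2}{2}a)\cdot y\le 0\}$, and that of the $\phi^-_{(-a,-b)}$-region is the analogous cone $D^-$ built from the facet normals $(-\alpha_j,-\beta_j)$ of $Q_1$ at $(-a,-b)$. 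Evaluating the defining forms at the direction $(-a,-b)$ yields value $-2$ for $D^+$ but $+2$ for $D^-$, so $(-a,-b)$ lies in the interior of $D^+$ yet outside $D^-$; thus $D^+\neq D^-$, the two regions cannot coincide, and $f\neq g$. This contradiction shows the interiors of the two shells are disjoint, leaving only the shared boundary spheres, as claimed.
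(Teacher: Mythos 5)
Your proof is correct and follows essentially the same route as the paper: the first claim by rerunning the ceiling argument with signs flipped, and disjointness by matching partial functions to force $(a',b')=(-a,-b)$ and then separating the two blow-ups via the evaluations $-2$ and $+2$ of the cone-defining Pansu derivatives at the direction $(-a,-b)$ --- precisely the computation the paper invokes when it observes that $-p$ lies in the subdomain of $\phi^+_{(a,b)}$ in the principal blow-up at $p$ but not in the subdomain of $\phi^-_{(-a,-b)}$ in the principal blow-up at $-p$. Your one refinement, passing to recession cones so that the comparison is invariant under translation, is a worthwhile strengthening: the paper's printed argument compares only principal blow-ups and leaves the case of translated (non-principal) blow-ups implicit, whereas your translation-invariant criterion covers every blow-up in the interiors of the two shells.
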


\begin{proof}
The first statement of this lemma can be proved following a similar argument to the proof of the previous lemma on the blow-ups at ceiling seam points.

It remains to show that the set of principal blow-ups at floor seam points is disjoint from the set of principal blow-ups at ceiling seam points. As above, we consider partial functions involved in the blow-up functions. Note that $\phi^+_{(a,b)} = \phi^-_{(a',b')}$ if and only if $(a', b') = (-a, -b)$. Hence we might ask if the principal blow-up at $p = (a,b, 1/\lambda^2)$ might be equal to the principal blow-up at $-p = (-a, -b, -1/\lambda^2)$. We observed in the previous proof that $-p$ was in the subdomain of $\phi^+_{(a,b)}$ in the principal blow-up at $p$, while $p$ was not. Similarly, we could show that $p$ is in the subdomain of $\phi^-_{(-a,-b)}$ in the principal blow-up at $-p$, while $-p$ is not. Hence these two blow-up functions cannot be equal.
\end{proof}

\begin{proof}[Proof of Theorem B] 
We note that the family of all blow-up functions of interior ceiling points and ceiling seam points give a closed ball of dimension $2n$ with boundary sphere isomorphic to $\bhor(\R^{2n},\norm{\cdot}_{V_1})$. The same is true of the family of all blow-up functions of interior floor points and floor seam points.

When we identify the common boundary spheres of these two $(2n)$-balls, we get a set homeomorphic to $S^{2n}$. Finally, the blow-up functions at interior ceiling and floor give the same family, resulting in an identification of $\{\phi^+_{(a,b)} \mid \norm{(a,b)}_{V_1} \leq 1\}$ with $\{\phi^-_{(a,b)} \mid \norm{(a,b)}_{V_1} \leq 1\}$.
\end{proof}

\subsection{Non-separated example}\label{sec:deg_example}

In this section, we will describe in detail an example of a non-separated horofunction boundary of $\HHR$. Recall that non-separated here means that the two families of blow-up functions coming from the interior ceiling/floor points and the interior wall points of the metric sphere are not disjoint. This results in a horofunction boundary whose topology is different from the topology described in Theorem B.

Let $\norm{\cdot}_{V_1} = 2\norm{\cdot}_{\text{Eucl}}$ on $\R^{2}$. It is not a difficult exercise to show that for all $\lambda \in (0, 1]$, the norm $\norm{(x,y,z)}_\lambda = \max \{\norm{(x,y)}_{V_1},  \lambda\sqrt{|z|}\}$ satisfies the triangle inequality.

Hence we observe that the closure of the blow-ups at interior ceiling and floor points $\Phi = \{\frac{\lambda^2}4(-b\cdot x + a\cdot y)\mid \norm{(a,b)}_{V_1} \leq 1\}$ is the closed disk of radius $1/2$ centered at the origin in $(\R^2)^*$. Since $\norm{\cdot}_{V_1}$ is smooth, we recall that the family of blow-ups at interior wall points $\Delta = \bhor(\R^{2n}, \norm{\cdot}_{V_1}) = \partial Q_1^\circ$, which is the circle of radius $1/2$ in $(\R^2)^*$. Thus $\Phi$ and $\Delta$ intersect along the boundary of the disk of radius 1/2.

The spherical shell (i.e., annulus) of ceiling seam blow-up points $S^1 \times I$, therefore, has its boundary circles identified (with a rotation), giving us a torus of functions in the boundary. The floor blow-up functions contribute another torus, also attached to the disk $\Phi$ along its boundary, as in Figure~\ref{fig:degenerate}. This results in a horofunction boundary whose topological type is different from what was described in Theorem B and shown in Figure~\ref{fig:button}.

\begin{figure}[ht]
\centering
\begin{tikzpicture}[scale=.8]

\node at (0,-.2) {\includegraphics[width=1.8in]{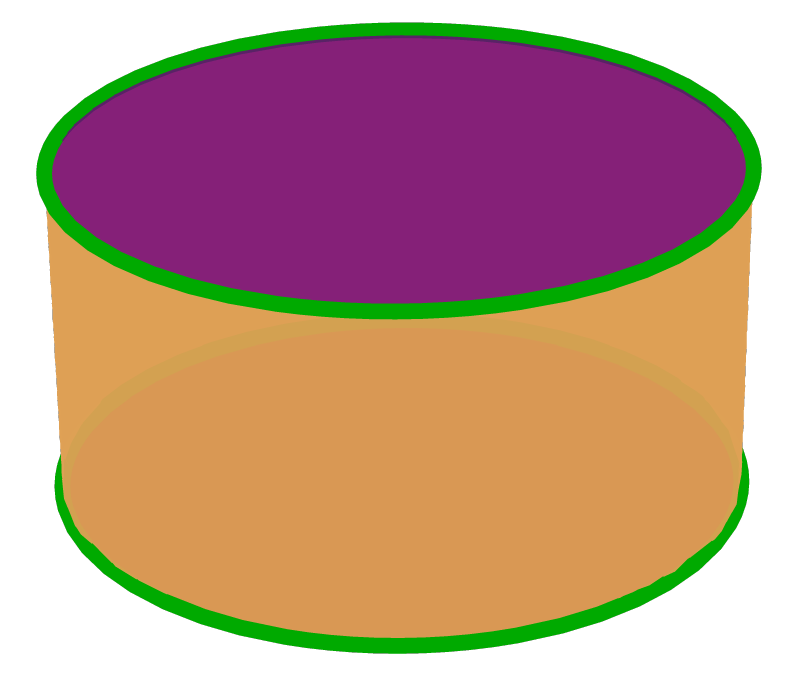}};

\begin{scope}[xshift = 10 cm, yshift = 0cm]

\node at (0,-.2) {\includegraphics[width=1.8in]{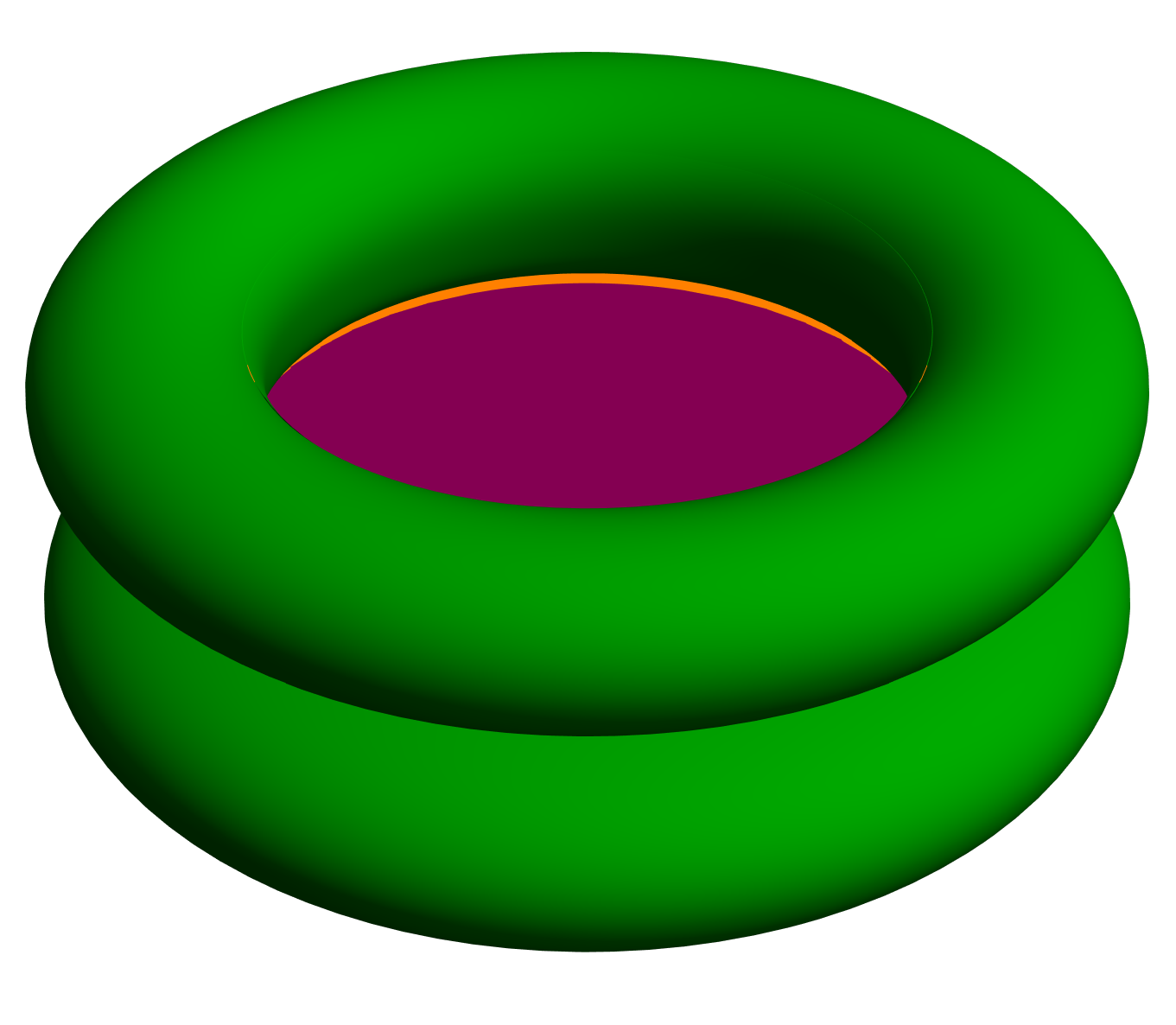}};

\end{scope}
\end{tikzpicture}
\caption{Unit metric sphere (left) and horofunction boundary (right) of $\HR$ where $\norm{\cdot}_{V_1}= 2\norm{\cdot}_{\text{Eucl}}$. In this non-separated example, the boundary has a different homeomorphism type.}\label{fig:degenerate}
\end{figure}

If we choose $\lambda < 1$, the intersection $\Psi \cap \Delta$ is empty, and the horofunction boundary of the homogenenous norm $\norm{(x,y,z)} = \max \{\norm{(x,y)}_{V_1}, \lambda \sqrt{|z|}\}$ on $\HHR$ is separated. That is, for this family of layered sup norms where $\norm{\cdot}_{V_1} = 2\norm{\cdot}_{\text{Eucl}}$ and $\norm{\cdot}_{V_2} = \lambda \sqrt{|\cdot|}$, for all but one value of $\lambda \in (0, 1]$, the horofunction boundary has the topology described in Theorem B.

For all other families of examples the author has considered, there has either been no value or a unique value of $\lambda$ (among those for which the homogenous norm satisfies the triangle inequality) for which the horofunction boundary is non-separated, suggesting that the topology of the boundary described in Theorem B is typical or generic among polysmooth layered sup norms.

\section{Horofunction boundaries of filiform Lie groups}\label{sec:filiform}

In this section we consider the $n$-dimensional, step-$(n-1)$ filiform Lie groups $L_n$ in exponential coordinates of the first kind for $n \geq 3$. As described above in Section~\ref{sec:exp-coords}, we can define $L_n$ by first defining its Lie algebra. Let $\mathfrak{l}_n$ be the $n$-dimensional Lie algebra generated by the vectors $X_1, X_2, \ldots, X_n$, where the only nontrivial bracket relations are
$[X_1,X_j] = X_{j+1}, \; 2\leq j \leq n-1$.
This Lie algebra admits the stratification 
$\g = \langle X_1, X_2\rangle \oplus \langle X_3\rangle \oplus \cdots \oplus \langle X_n\rangle$.

We can view $L_n$ as $\R^n$ with the group multiplication defined by the Baker--Campbell--Hausdorff formula. For any $n \geq 3$, the group multiplication in the first three coordinates is identical to that of the Heisenberg group $\HR = L_3$, and the group $L_4$ is commonly known as the Engel group. As $n$ gets large, the Baker--Campbell--Hausdorff formula involves more terms with higher-order commutators, and consequently, the group multiplication becomes quite complicated and difficult to write down. This complexity makes it difficult to give a full analytic description of the horofunction boundary, but we are able to determine the dimension of the horofunction boundary of $L_n$ equipped with any polysmooth layered sup norm for any $n \geq 3$.

\begin{thmc}\label{thm-filiform}
The filiform group $L_n$ when equipped with a polysmooth layered sup norm has a horofunction boundary which is of the expected dimension of $(n-1)$ when $n \leq 7$. For $n>7$, the horofunction boundary is of dimension $\lceil\frac{ n }{2}\rceil + 2$, which is strictly less than $n-1$.
\end{thmc}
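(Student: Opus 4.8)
The plan is to locate every horofunction as a blow-up of the layered sup norm at a point of the unit sphere $\partial B$, to organize these blow-ups by how many layers simultaneously attain the maximum, and then to read off the dimension of the resulting family. By Theorem~\ref{thm:genPD} every horofunction is a piecewise-linear function of the two first-layer coordinates $(x_1,x_2)$; since it is a blow-up of a convex norm it is in fact a \emph{convex} piecewise-linear function $g:\R^2\to\R$ with $g(0)=0$, determined by the finitely many (slope, intercept) pairs on its graph. Thus $\bhor L_n$ embeds into this space of functions, and its dimension is the maximum, over strata of $\partial B$, of the dimension of the blow-up family attached to each stratum. First I would stratify $\partial B$ by the \emph{active set} $A\subseteq\{1,\dots,n-1\}$ of layers $j$ for which $\norm{\pi_j(p)}_{V_j}^{1/j}=1$; counting defining equations (the first layer contributes $2-[1\in A]$ free directions and each higher layer $1-[j\in A]$) shows the stratum with active set $A$ has dimension $n-|A|$. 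On each such stratum the principal blow-up is assembled, via Theorem~\ref{piecetogether}, from the Pansu derivatives of the active terms supplied by Propositions~\ref{prop:pansuDiff} and~\ref{prop:edges}, and every other blow-up is a translate of it by Proposition~\ref{prop5e8c979c}.

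The core computation is the shape of those Pansu derivatives. Extracting the coefficient of $t^1$ in $(p\,\delta_t x)_{j+1}$ through Proposition~\ref{prop:polynomials}, the derivative of the $j$-th term at $p$ is a linear slope
\[
\ell_j(p)=u_j(p)\,x_1+v_j(p)\,x_2,
\]
where $u_j,v_j$ are homogeneous of degree $j-1$ in the coordinates of $p$. Carrying out the Baker--Campbell--Hausdorff bookkeeping for $\mathfrak{l}_n$ (whose only nonzero brackets are $[X_1,X_k]=X_{k+1}$) yields three structural facts that drive the whole count: the $x_2$-coefficient is a pure power $v_j(p)=\kappa_j\,a_1^{\,j-1}$, coming from the iterated $X_1$-tower $[X_1,[X_1,\dots,X_2]]=X_{j+1}$; the slope $\ell_j$ depends only on $a_1,\dots,a_j$; and the leading dependence of $u_j$ is on the single coordinate $a_j$. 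I would check these against $L_3=\HR$ (where $\ell_2\propto -a_2x_1+a_1x_2$, recovering Lemma~\ref{lem:ceilingBU}) and against $L_4$ (where $\ell_3\propto(-\tfrac12 a_3-\tfrac1{12}a_1a_2)x_1+\tfrac1{12}a_1^2x_2$). One point requiring care: $\pi_j$ must be the \emph{intrinsic} projection onto the layer $V_j$, so the computation should be done in exponential coordinates of the first kind, where $\pi_j(x)=x_{j+1}$; the more tempting semidirect-product coordinates introduce correction terms in the higher projections that cannot be dropped.

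With the slopes in hand, I would write the dimension of the blow-up family from a stratum as the dimension of the realizable slope-configurations plus the dimension of the realizable translation family. The structural facts make the first summand rigid: each $\ell_j(p)$ lives in the two-dimensional $V_1^*$, \emph{all} the vertical coordinates $v_j$ are controlled by the single parameter $a_1$, and the coordinates $a_m$ with $m>\max A+1$ enter no slope at all and so lie in the fiber of the slope map. The $u_j$ can be moved independently only when the controlling coordinate $a_j$ is free, i.e. when layer $j-1$ is inactive, and the triangular dependence of $(u_j)$ on $(a_j)$ turns independent control into a matching/rank problem on the active set. The second summand is governed by Proposition~\ref{prop5e8c4f94} and Theorem~\ref{piecetogether}: translating the blow-up amounts to moving the dominance fan of the pieces, whose wall-gradients again lie in the two-dimensional $V_1^*$. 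The extremal active set therefore balances two competing effects—enlarging $|A|$ shrinks the stratum but enriches the piece set and the admissible translations—and the $2$-dimensionality of $V_1^*$ is the uniform constraint behind both summands. Optimizing over $A$ should produce $\lceil n/2\rceil+2$ as the best achievable value, which for $n\le 7$ meets or exceeds $n-1$ (so the boundary is full-dimensional) and for $n\ge 8$ falls strictly below it.

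The hard part will be making this balance precise and two-sided. For the lower bound I must exhibit an explicit stratum and sequences $p_n\to p$ realizing $\lceil n/2\rceil+2$ genuinely independent horofunctions; for the upper bound I must show no active set does better, which forces a careful analysis of \emph{which} translates are actually realizable. Because the region-defining Pansu derivatives all lie in the two-dimensional $V_1^*$, at most two are linearly independent, so the translation freedom promised by Proposition~\ref{prop5e8c4f94} cannot be spent arbitrarily and must be reconciled with the convex-position (upper-envelope) combinatorics of the points $(\ell_j(p),c_j)$, whose heights $v_j=\kappa_j a_1^{\,j-1}$ trace a moment-curve pattern that limits how many pieces can be simultaneously active and independently translated. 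Pinning down this count exactly—so that the stratum/fiber/translation dimensions combine to give precisely $\lceil n/2\rceil+2$, and in particular verifying the crossover at $n=8$ where the non-consecutivity constraint first becomes binding—is where essentially all of the work lies.
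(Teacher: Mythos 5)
Your proposal follows essentially the same route as the paper's own proof: stratify the unit sphere by which coordinates attain the max, extract the Pansu derivatives of the homogeneous coordinate functions from the Baker--Campbell--Hausdorff formula (the paper's Tables~1--2), use the key structural fact that every $x_2$-coefficient is a pure power of $a_1$ (so $k$ partial functions give at most a $(k+1)$-dimensional family of principal blow-ups over an $(n-k)$-dimensional face), add at most two dimensions of translates because the pieces live in the two-dimensional $V_1^*$, and optimize $\min(n-k,k+1)+2$ over $k$ to obtain $\lceil n/2\rceil+2$ with the crossover at $n=8$. The paper does exactly this count, supplemented by an explicit table of faces realizing dimension $n-1$ for $3\le n\le 7$, which is the lower-bound step your plan flags as remaining work.
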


To the author's knowledge, this theorem provides the first examples of topological groups $G$ whose horofunction boundaries are not of dimension $\dim(G)-1$. 

To prove Theorem~C, we will first show there are upper bounds on the dimension of the horofunction boundary of $L_n$ in Section~\ref{sec-upperbounds}. We will then exhibit families of horofunctions of maximal dimension for each $L_n$ in Section~\ref{sec-thmC}. First, though, we will make some observations concerning blow-ups and horofunctions in the boundary of $L_n$ in Section~\ref{sec-observations}, and in Section~\ref{sec-example}, we will look at parts of the horofunction boundary of $L_4$ to provide some concrete examples.

\subsection{Observations on blow-ups in $\bhor(L_n)$}\label{sec-observations}

Consider $L_n$ equipped with a polysmooth layered sup norm
$\norm{x} = \max_{1\leq j \leq s} \left\{\norm{\pi_j(x)}_{V_j}^{1/j}\right \}$. Recall that for the filiform Lie groups $L_n$, every layer of the stratification is 1-dimensional except for the first layer $V_1$, which is 2-dimensional. Thus any layered sup norm differs from the homogeneous sup norm only in the first layer of the group, if at all.

Using Lemma 2.3 from \cite{FNG}, we know that every horofunction in $\bhor(L_n)$ can be realized as the blow-up of the norm at some point $p \in \partial B$, the unit metric sphere for the norm. As we did for $\HHR$, we will partition $\partial B$ into faces. We define a \emph{face} of $\partial B$ not using convex geometry, but rather as a maximal connected set of points that lies either 
 \begin{enumerate}[(i)]
 \item entirely in a region $\Omega$ on which the layered sup norm is Pansu differentiable, or
 \item in the intersection of a finite collection of regions $\Omega_1, \ldots, \Omega_\ell$ on each of which the homogeneous norm is Pansu differentiable.
  \end{enumerate}
This is related to but not necessarily equivalent to saying a face is a connected set of points such that $\{\norm{\pi_i(p)}_{V_i} = 1 \mid i \in I\}$, for $I$ some nonempty subset of indices $I \subseteq \{1, \ldots, n-1\}$.

For example, consider the unit spheres of $L_3 = H(\R)$ in Figure~\ref{fig:button}. The ceiling, the floor, the walls, and the seams are all faces of $\partial B$. In the smooth case, there is a single smooth face coming from the vertical wall defined by $\{(x, y, z) : \norm{(x,y)}_{V_1} = 1, |z| < 1\}$. Meanwhile, for polygonal norms we can further subdivide the set $\{(x, y, z) : \norm{(x,y)}_{V_1} = 1, |z| < 1\}$ into the blue squares, which are faces of type (i), and the orange vertical edges, which are faces of type (ii).

Let $p \in \partial B$, and suppose that $\Omega$ is any closed neighborhood of $p$ such that $p$ is in the interior of $\Omega$. We denote by $\BU(p)$ the set of all blow-up functions of the norm, $\BU((\Omega, \norm{\cdot}), \{p_n\},\{\epsilon_n\})$, where $\{p_n\}$ is a sequence which converges to $p$. This set will be equal to the principal blow-up of the norm at $p$ along with any (finite or infinite) translates of that principal blow-up. If $\cF \subseteq \partial B$, we write $\BU(\cF) = \cup_{p \in \cF} \:\BU(p)$ to denote the set of all blow-ups along points in $\cF$.

After decomposing $\partial B$ into a finite collection of disjoint faces, we then have that
\[\dim \bhor(L_n) = \max_{\cF} \;\dim \BU (\cF).
\]

\begin{observation}\label{obs:translates} Theorem~A tells us that all of our horofunctions are continuous piecewise-linear functions of the coordinates $x_1$ and $x_2$. Additionally, a consequence of Proposition~\ref{prop5e8c4f94} is that in the principal blow-up of the norm at any non-smooth point on the unit sphere, each partial function will have subdomain a positive linear cone based at the origin.

Suppose we have a continuous piecewise-linear function in two variables defined with two distinct linear partial functions. By continuity, the boundary separating the subdomains of the two linear functions is a line, and there is a 1-dimensional family of translates of that principal blow-up satisfying $f(e) = 0$. Another consequence of Proposition~\ref{prop5e8c4f94} is that we will see all of these translates as blow-ups in the horofunction boundary. See Figure~\ref{fig:translates2}.

On the other hand, a continuous piecewise-linear function in two variables defined using three or more partial linear functions (where each partial function has subdomain equal to a positive linear cone) has a 2-dimensional family of translates satisfying $f(e) = 0$. See Figure~\ref{fig:translates3}.

\end{observation}

\begin{figure}[h]
\centering
\begin{tikzpicture}[scale = 1]

\draw (-2,0) -- (2,0);
\draw (0,-2) -- (0, 2);
\fill[magenta, opacity =.5] (-2,-.5) -- (-2,2) -- (.5,2) -- cycle;
\fill[cyan, opacity =.5] (-2,-.5) -- (-2, -2) -- (2,-2) -- (2,2) -- (.5, 2) -- cycle;
\draw[thick] (-2,-.5) -- (.5,2);
\node[gray!40!black, scale = .8, rotate = 45] at (-1, 1) {$\alpha x_1 + \beta x_2 - C$};
\node[gray!40!black, scale = .8] at (1, -1) {$\gamma x_1 + \delta x_2$};

\begin{scope}[xshift = 5.5cm]
\draw (-2,0) -- (2,0);
\draw (0,-2) -- (0, 2);
\fill[magenta, opacity =.5] (-2,-2) -- (-2,2) -- (2,2) -- cycle;
\fill[cyan, opacity =.5] (-2,-2) -- (2,-2) -- (2,2) -- cycle;
\draw[thick] (-2,-2) -- (2,2);
\node[gray!40!black, scale = .8] at (-1, 1) {$\alpha x_1 + \beta x_2$};
\node[gray!40!black, scale = .8] at (1, -1) {$\gamma x_1 + \delta x_2$};

\end{scope}

\begin{scope}[xshift = 11cm]
\draw (-2,0) -- (2,0);
\draw (0,-2) -- (0, 2);
\fill[magenta, opacity =.5] (-1, -2) -- (-2,-2) -- (-2,2) -- (2,2) -- (2, 1) -- cycle;
\fill[cyan, opacity =.5] (-1,-2) -- (2,-2) -- (2,1) -- cycle;
\draw[thick] (-1,-2) -- (2,1);
\node[gray!40!black, scale = .8] at (-1, 1) {$\alpha x_1 + \beta x_2$};
\node[gray!40!black, scale = .8, rotate = 45] at (1, -1) {$\gamma x_1 + \delta x_2 - D$};

\end{scope}

\end{tikzpicture}
\caption{Example of a principal blow-up function in two variables with two linear partial functions (center) along with two of its translates (left and right), where $f(e)$ always equals 0.}\label{fig:translates2}
\end{figure}

\begin{figure}[h]
\centering
\begin{tikzpicture}[scale = 1]
\draw (-2,0) -- (2,0);
\draw (0,-2) -- (0, 2);
\fill[magenta, opacity =.5] (-2,.5) -- (-.5,.5) -- (1,2) -- (-2,2) -- cycle;
\fill[cyan, opacity =.5] (-2,.5) -- (-.5,.5) -- (2, -2) -- (-2,-2) -- cycle;
\fill[orange, opacity =.5] (-.5,.5) -- (2, -2) -- (2, 2) -- (1, 2) -- cycle;
\draw[thick] (-2,.5) -- (-.5,.5) -- (1,2);
\draw[thick] (-.5, .5) -- (2, -2);
\node[gray!40!black, scale = .8] at (-.65, 1.6) {$\alpha x_1 + \beta x_2 - C$};
\node[gray!40!black, scale = .8] at (-1, -.7) {$\gamma x_1 + \delta x_2$};
\node[gray!40!black, scale = .8] at (1.1, .5) {$\epsilon x_1 + \zeta x_2$};

\begin{scope}[xshift = 5.5cm]
\draw (-2,0) -- (2,0);
\draw (0,-2) -- (0, 2);
\fill[magenta, opacity =.5] (-2,0) -- (0,0) -- (2,2) -- (-2,2) -- cycle;
\fill[cyan, opacity =.5] (-2,0) -- (0,0) -- (2,-2) -- (-2,-2) -- cycle;
\fill[orange, opacity =.5] (0,0) -- (2, -2) -- (2, 2) -- cycle;
\draw[thick] (-2,0) -- (0,0) -- (2,2);
\draw[thick] (0,0) -- (2, -2);
\node[gray!40!black, scale = .8] at (-1, 1) {$\alpha x_1 + \beta x_2$};
\node[gray!40!black, scale = .8] at (-1, -1) {$\gamma x_1 + \delta x_2$};
\node[gray!40!black, scale = .8] at (1.2, .2) {$\epsilon x_1 + \zeta x_2$};

\end{scope}

\begin{scope}[xshift = 11cm]
\draw (-2,0) -- (2,0);
\draw (0,-2) -- (0, 2);
\fill[magenta, opacity =.5] (-2,-.7) -- (-.3,-.7) -- (2,1.6) -- (2,2) -- (-2,2) -- cycle;
\fill[cyan, opacity =.5] (-2,-.7) -- (-.3,-.7) -- (1,-2) -- (-2,-2) -- cycle;
\fill[orange, opacity =.5] (-.3,-.7) -- (1, -2) -- (2, -2) -- (2, 1.6) -- cycle;
\draw[thick] (-2,-.7) -- (-.3,-.7) -- (2,1.6);
\draw[thick] (-.3,-.7) -- (1, -2);
\node[gray!40!black, scale = .8] at (-1, .7) {$\alpha x_1 + \beta x_2$};
\node[gray!40!black, scale = .8] at (-.6, -1.52) {$\gamma x_1 + \delta x_2 - D_1$};
\node[gray!40!black, scale = .8] at (1, -.7) {$\epsilon x_1 + \zeta x_2 - D_2$};

\end{scope}

\end{tikzpicture}
\caption{Example of a principal blow-up function in two variables with three linear partial functions (center) along with two of its translates (left and right), where $f(e)$ always equals 0.}\label{fig:translates3}
\end{figure}

\begin{observation} Recall that every layer of the stratification of $L_n$ is 1-dimensional except for the first layer $V_1$. Thus for each $i\geq 3$, we have a positive and negative $(n-1)$-dimensional face of points $p = (a_1, \ldots, a_n) \in \partial B$ defined by $a_i =1$ (or $a_i = -1$), where $\norm{(a_1,a_2)}_{V_1} < 1$, and  $|a_j| <1$ for all $j>2, j\neq i$. At points on these faces, the Pansu derivative of the layered sup norm is the Pansu derivative of the homogeneous coordinate function $|x_i|^{1/\nu_i}$. In the discussion after the proof of Proposition~\ref{prop:pansuDiff}, we observed that the Pansu derivatives of these homogeneous coordinate functions depend only on a coefficient $c_{i,1}$, which is the coefficient of $t$ in $(p\delta_tx)_i$, the $i$th coordinate of the product $p\delta_tx$. This coefficient $c_{i,1}$ is polynomial in the coordinates of $p$ and the coordinates of $x$ coming from the first layer. Hence, it suffices for us to specifically look at the coefficients of $tx_1$ and $tx_2$ in the $i$th coordinate of the the product $p\delta_tx$ as in Table~\ref{table:coords}.

Using exponential coordinates we let 
$p = X = a_1 X_1 + a_2 X_2 + \cdots + a_nX_n,$ and 
$\delta_tx = Y =  tx_1X_1 + tx_2X_2 + \cdots + t^{n-1}x_nX_n.$
A close look at the Baker--Campbell--Hausdorff formula and Proposition~\ref{prop:polynomials} tells us that terms in the product $p\delta_tx$ which only involve linear factors of $t$ must come from brackets which contain exactly one $Y$, i.e., of the form $[X, \ldots, X, Y]$. These brackets appear in the Baker--Campbell--Hausdorff formula with coefficients related to the Bernoulli numbers. For more detail see, for example, M\"uger's notes \cite{muger-notes}.

In Table~\ref{table:coords}, we give the terms in the $i$th coordinate of the product $p\delta_t x$ which are either constant or linear with respect to the variable $t$. All other terms of higher degree in $t$ do not impact the Pansu derivative of the homogeneous coordinate functions and so are not needed.

\renewcommand{\arraystretch}{1.2}
\begin{table}[H]
$
\begin{array}{|c|c|c|}
\hline
\text{Coordinate $i$} & \text{$i$th coordinate $(p\delta_tx)_i$ of product $(p\delta_tx)$}\\
\hline
1 & a_1 + tx_1 \\
2 & a_2 + tx_2 \\
3 & a_3 + tx_1(-\frac12a_2) + tx_2(\frac12 a_1) + \dots\\
i\geq 4 & 
\begin{cases}
a_i + tx_1(-\frac12a_{i-1} - \frac1{12}a_1 a_{i-2} - \cdots - \frac{B_{2k-2}}{(2k-2)!}a_1^{2k-3}a_2) + tx_2(\frac{B_{2k-2}}{(2k-2)!}a_1^{2k-2}) + \dots, & i = 2k\\
a_i + tx_1(-\frac12a_{i-1} - \frac1{12}a_1 a_{i-2} - \cdots - \frac{B_{2k-2}}{(2k-2)!}a_1^{2k-3}a_3) + \dots, & i = 2k+1
\end{cases}\\
\hline
\end{array}$
\caption{Multiplication in $L_n$.}
\label{table:coords}
\end{table}

In Table~\ref{table:pD}, we give the Pansu derivatives of the homogeneous coordinate functions making use of the formula found in the discussion after the proof of Proposition~\ref{prop:pansuDiff}. We note that in the filiform group $L_n$, the dilation weight or homogeneous degree $\nu_i$ of the coordinate function $x_i$ is
\[
\nu_i = \begin{cases}
1, & i = 1,2\\
i-1 & i \geq 3
\end{cases}.
\]

\begin{table}[ht]
$
\begin{array}{|c|c|}
\hline
\text{Coordinate $i$} & \text{Pansu derivative of $i$th homogeneous coordinate function $|x_i|^{1/\nu_i}$, $x_i>0$}\\
\hline
1 &  x_1\\
2 & x_2\\
3  & \frac{1}{2}\left[-\frac12a_2x_1 + \frac12a_1x_2\right]\\
i\geq 4 & 
\begin{cases}
\frac{1}{i-1}\left[(-\frac12a_{i-1} - \frac1{12}a_1 a_{i-2} - \cdots - \frac{B_{2k-2}}{(2k-2)!}a_1^{2k-3}a_2)x_1 + \frac{B_{2k-2}}{(2k-2)!}a_1^{2k-2}x_2\right], & i = 2k\\
\frac{1}{i-1}\left[(-\frac12a_{i-1} - \frac1{12}a_1 a_{i-2} - \cdots - \frac{B_{2k-2}}{(2k-2)!}a_1^{2k-3}a_3)x_1\right], & i = 2k+1\\
\end{cases}\\
\hline
\end{array}$
\caption{Pansu derivatives of homogeneous coordinate functions.}
\label{table:pD}
\end{table}
\end{observation}

\begin{observation} \label{obs:injection}

Suppose that $\cF$ is an $m$-dimensional face of $\partial B$ with free parameters $a_1, \ldots, a_m$. Additionally suppose that at each point $p \in \cF$, the principal blow-up of the polysmooth layered sup norm is defined by $\ell$ partial functions, each of which is a linear function of $x_1$ and $x_2$ with coefficients which are (possibly constant) polynomials in the free parameters. Let $G$ be the list of these polynomial coefficients. Then $|G| \leq 2\ell$, and the family of all possible principal blow-ups at points $p$ along $\cF$ has dimension less than or equal to $\min\{m, 2\ell\}$. Indeed, the dimension of this family of principal blow-ups is equal to the size of the largest injective partial map
\[
\{a_1, \ldots, a_k\} \to G,
\]
where each $a_i$ is sent to a coefficient in $G$ in which $a_i$ appears nontrivially.

Imagine, for example, that $\cF$ is a 4-dimensional face of $\partial B$ parametrized by four free parameters, $a_{1}$, $a_{2}$, $a_{3}$, and $a_{4}$. Let us further suppose that for each point $p \in \cF$, the principal blow-up of the polysmooth layered sup norm at $p$ is made up of three partial functions. That is, we're assuming that $\cF$ is defined by points which lie in the intersection of three regions $\Omega_1, \Omega_2,$ and $\Omega_3$ on each of which the layered sup norm is Pansu differentiable.

In this case, the principal blow-up at each $p \in \cF$ will look something like the center panel of Figure~\ref{fig:translates3}, and so $G = \{\alpha, \beta, \gamma, \delta, \epsilon, \gamma\}$. Each of these coefficients in $G$ is a polynomial in the free parameters $a_{1}$, $a_{2}$, $a_{3}$, and $a_{4}$. As we vary the four free parameters, if we want the family of principal blow-ups along $\cF$ to be 4-dimensional, we need there to be an injective map from $\{a_{1}, a_{2}, a_{3}, a_{4}\}$ to $G$, where each $a_{i}$ is sent to a coefficient in which $a_{i}$ appears nontrivially. Examples~\ref{ex:inconsistency} and \ref{ex:injection} give explicit computations where we fail to have this type of injective map.
\end{observation}

\subsection{An example: Parts of the horofunction boundary of $L_4$}\label{sec-example}


Let us consider the 4-dimensional filiform group $L_4$ with group multiplication
\begin{align*}(x_1,x_2,x_3,x_4)(y_1,y_2,y_3,y_4) = \Bigl( x_1+y_1, x_2+y_2&, x_3+y_3+\frac12(x_1y_2 - x_2y_1), \\
& x_4 + y_4 +\frac12(x_1y_3-x_3y_1)+ \frac1{12}(x_1-y_1)(x_1y_2-x_2y_1)\Bigr).
\end{align*}

We will equip $L_4$ with the homogeneous sup norm $\norm{(x_1,x_2,x_3,x_4)}_\infty = \max\{|x_1|, |x_2|, \sqrt{|x_3|}, \sqrt[3]{|x_4|}\}$, and we will compute the families of blow-ups one finds along a few different faces of the unit sphere, i.e., cube, to connect the observations from the previous section to this concrete example.

For simplicity, we will restrict our attention to the all positive orthant of $\R^4 \simeq L_4$, and we can partition this orthant into closed regions on which the homogeneous sup norm takes on different values. For example, there is a region within this orthant on which $x_1$ ``wins'', i.e., $\norm{(x_1,x_2,x_3,x_4)}_\infty = \max\{x_1, x_2, \sqrt{x_3}, \sqrt[3]{x_4}\} = x_1$. Indeed, this will occur in the region given by
\[
\Omega_1 = \{(x_1,x_2,x_3,x_4) \mid x_1 - x_2 \geq 0, \; x_1^2 - x_3 \geq 0,\; x_1^3 - x_4 \geq 0 \}.
\]
We can similarly define regions on which $x_2$, $\sqrt{x_3}$, and $\sqrt[3]{x_4}$ are picked out by the max function in the positive orthant, given, respectively, by
\begin{align*}
\Omega_2 &= \{(x_1,x_2,x_3,x_4) \mid x_2 - x_1 \geq 0, \; x_2^2 - x_3 \geq 0, \; x_2^3 - x_4 \geq 0 \},\\
\Omega_3 &= \{(x_1,x_2,x_3,x_4) \mid x_3 - x_1^2 \geq 0, \; x_3 - x_2^2 \geq 0, \; x_3^{3/2} - x_4 \geq 0 \},\\
\Omega_4 &= \{(x_1,x_2,x_3,x_4) \mid x_4 - x_1^3 \geq 0, \; x_4 - x_2^3 \geq 0,\;  x_4- x_3^{3/2} \geq 0 \}.
\end{align*}

\begin{ex}\textbf{Family of blow-ups at the vertex $(1,1,1,1)$}

We note that the point $p = (1,1,1,1)$ is a point on the unit sphere of the homogeneous sup norm, and it is on the boundary of all four regions $\Omega_i$, $1\leq i \leq 4$. To compute the principal blow-up of $\norm{\cdot}_\infty$ at $p$, we want to understand the behavior of $\lim_{t\to0} \frac{\norm{p\delta_tx}_\infty - \norm{p}_\infty}{t}$. As $x= (x_1,x_2,x_3,x_4)$ varies, $p\delta_t x$ will visit all four $\Omega_i$, affecting the way the norm is computed. Thus the principal blow-up will be composed of four partial functions, each a linear function of $x_1$ and/or $x_2$.

Fortunately Propositions~\ref{prop5e8c4f94} and \ref{prop5e8c979c} tell us how to compute this principal blow-up. In Table~\ref{table:pD} we already have the Pansu derivatives of the homogeneous coordinate functions computed. It remains to find the subdomain, in this case the linear cone, on which each of these partial functions is defined. Again we turn to Pansu derivatives, as in Proposition~\ref{prop5e8c4f94}. For the region $\Omega_1$, the Pansu derivative of the homogeneous coordinate function $|x_1|$ is simply $x_1$, and $\Omega_1$ is defined by $F_1(x) = x_1 - x_2 \geq 0$, $F_2(x) = x_1^2 - x_3 \geq 0$, and $F_3(x) = x_1^3 - x_4 \geq 0$. We compute the Pansu derivative of $F_1$ at $p$ to find
\[
\pD F_1\vert_p(x) = \lim_{t\to0}\frac{F_1(p\delta_tx) - F_1(p)}{t} = \lim_{t\to0}\frac{(1+tx_1) - (1+tx_2)}{t} = x_1-x_2. 
\]
Similarly, we find 
$ \pD F_2\vert_p(x) = \frac52 x_1 - \frac12 x_2,
\text{ and } \pD F_3\vert_p(x) = \frac{43}{12} x_1 - \frac{1}{12} x_2.$
Thus in the principal blow-up at $p = (1,1,1,1)$, the partial function $x_1$ is defined on the linear cone given by $\cap_{i=1}^3 F_i(x) \geq 0 = \{(x_1,x_2) \mid x_2 \leq x_1,\; x_2 \leq 5x,\; x_2 \leq 43x \}$, as shown in Figure~\ref{fig:example1}. The condition $x_2 \leq 5x$ is redundant here, but still had to be computed.

\begin{figure}[h]
\centering
\begin{tikzpicture}[scale = 1]

\draw[gray] (-2,0) -- (2,0);
\draw[gray] (0,-2) -- (0, 2);
\fill[magenta, opacity =.5] (-1/20,-2) -- (0,0) -- (2,2) -- (2, -2) -- cycle;
\fill[cyan, opacity =.5] (2,2) -- (0,0) -- (-2,2/3) -- (-2, 2) -- cycle;
\fill[orange, opacity =.5] (-2,2/3) -- (0,0) -- (-2,-1/2) -- cycle;
\fill[green!40!gray, opacity =.5] (-2,-1/2) -- (0,0) -- (-1/20, -2) -- (-2,-2) -- cycle;
\draw[thick] (0,0)--(2,2);
\draw[thick] (0,0) -- (-1/20, -2);
\draw[thick] (0,0) -- (-2,-1/2);
\draw[thick] (0,0) -- (-2, 2/3);
\node[gray!40!black, scale = .8] at (1, -1) {$x_1$};
\node[gray!40!black, scale = .8] at (-.5, 1) {$x_2$};
\node[gray!40!black, scale = .8] at (-3, 0) {$\frac{-1}{4}x_1 + \frac14 x_2$};
\draw[thick, gray!40!black, ->] (-2.2, 0) to[bend left = 20] (-1.8, .2);
\node[gray!40!black, scale = .8] at (-1, -1) {$\frac{-7}{36}x_1 + \frac1{36} x_2$};

\end{tikzpicture}
\caption{Principal blow-up at the vertex $(1,1,1,1)$.}\label{fig:example1}
\end{figure}

A similar analysis for the three other regions $\Omega_2$, $\Omega_3$, and $\Omega_4$ tells us that the principal blow-up of the homogeneous sup norm at $p = (1,1,1,1)$ is
\[
f(x) = \begin{cases}
x_1, & x_2 \leq x_1 \text{ and } x_2 \leq 43x_1\\
x_2, & x_2 \geq x_1 \text{ and } x_2 \geq -1/3x_1\\
-1/4x_1 + 1/4x_2, &  1/4x_1 \leq x_2 \leq -1/3x_1 \\
-7/36x_1 + 1/36x_2, &  43x_1 \leq x_2 \leq 1/4x_1
\end{cases}.
\]

As discussed above in Observation~\ref{obs:translates}, since this principal blow-up is composed of at least three partial functions, it has a two-dimensional family of translates. We see, therefore, that $\BU(p)$ is a 2-dimensional family of continuous functions satisfying $f(e) = 0$.

\end{ex}

\begin{ex}\textbf{``Inconsistency'' in the dimension of families of blow-ups}\label{ex:inconsistency}

Here we will exhibit two 2-dimensional faces of the unit sphere of the homogeneous sup norm on $L_4$ whose respective families of blow-ups are of different dimensions.

Let us first consider the set $\mathcal F_1 = \{(a_1,a_2,1,1)\mid 0< a_1,a_2 <1\}$, which is the intersection of a 2-dimensional face with the positive orthant. Points $p \in \mathcal F_1$ lie along the boundary of the regions $\Omega_3$ and $\Omega_4$ defined above, and as such the principal blow-up at any point $p \in \cF_1$ will be defined using two partial functions. From Table~\ref{table:pD}, we see that the two corresponding partial functions (i.e., Pansu derivatives) are $f_1(x) = -a_2/4\:x_1 + a_1/4\:x_2$ and $f_2(x) = (-1/6 - a_1a_2/36) x_1 + a_1^2/36\:x_2$. The boundary between $\Omega_3$ and $\Omega_4$ in $L_4$ is given by $F(x) = x_4 - x_3^{3/2} = 0$, and the Pansu derivative of $F$ gives us the boundary between $f_1$ and $f_2$ in the blow-up. As shown in the left panel of Figure~\ref{fig:example2}, we end up with
\[
f(x) = \begin{cases}
-a_2/4\:x_1 + a_1/4\:x_2, & x_2 \geq (6-9a_2 + a_1a_2)/(-9a_1 + a_1^2) x_1\\
(-1/6 - a_1a_2/36) x_1 + a_1^2/36\:x_2, & x_2 \leq (6-9a_2 + a_1a_2)/(-9a_1 + a_1^2) x_1
\end{cases}.
\]

\begin{figure}[h]
\centering
\begin{tikzpicture}[scale = 1]

\draw (-2,0) -- (2,0);
\draw (0,-2) -- (0, 2);
\fill[magenta, opacity =.5] (-2,-1) -- (-2,2) -- (2,2) -- (2, 1) -- cycle;
\fill[cyan, opacity =.5] (-2,-1) -- (-2, -2) -- (2,-2) -- (2,1) -- cycle;
\draw[thick] (-2,-1) -- (2,1);
\node[gray!40!black, scale = .8] at (-1, 1) {$-\frac{a_2}4 x_1 + \frac{a_1}4x_2$};
\node[gray!40!black, scale = .8] at (.55, -1) {$\left(-\frac16 - \frac{a_1a_2}{36}\right) x_1 + \frac{a_1^2}{36}x_2$};

\begin{scope}[xshift = 7cm]
\draw (-2,0) -- (2,0);
\draw (0,-2) -- (0, 2);
\fill[magenta, opacity =.5] (-2,-2) -- (-2,2) -- (.5,2) -- (-.5, -2) -- cycle;
\fill[cyan, opacity =.5] (-.5,-2) -- (2,-2) -- (2,2) -- (.5, 2) -- cycle;
\draw[thick] (-.5,-2) -- (.5,2);
\node[gray!40!black, scale = .8] at (-1, 1) {$-\frac{a_2}4 x_1 + \frac14x_2$};
\node[gray!40!black, scale = .8] at (1, -1) {$x_1$};

\end{scope}

\end{tikzpicture}
\caption{Left: Principal blow-up at $(a_1,a_2, 1,1)$. Right: Principal blow-up at $(1,a_2, 1, a_4)$.}\label{fig:example2}
\end{figure}

As we vary $a_1$ and $a_2$ we get a 2-dimensional family of principal blow-up functions, and since each principal blow-up is composed of two partial functions, each one has a 1-dimensional family of translates. In the end, this two-dimensional face $\cF_1$ of $\partial B$ gives us a 3-dimensional family of blow-up functions.

Now let us consider the two-dimensional face in the positive orthant given by
$\mathcal F_2 = \{(1,a_2,1,a_4)\mid 0< a_2, a_4 <1\}$. Points $p \in \mathcal F_2$ lie along the boundary of the regions $\Omega_1$ and $\Omega_3$, and so again each principal blow-up at a point $p \in \cF_2$ will be defined via two partial functions. From Table~\ref{table:pD}, we see that the two corresponding partial functions (i.e., Pansu derivatives) are $g_1(x) = x_1$ and $g_2(x) = -a_2/4\:x_1 + 1/4\:x_2$. As shown in the right panel of Figure~\ref{fig:example2}, the principal blow-up at $p$ is
\[
g(x) = \begin{cases}
x_1, & x_2 \leq (4+a_2)x_1\\
-a_2/4\:x_1 + a_1/4\:x_2, & x_2 \geq (4+a_2)x_1
\end{cases}.
\]

We observe here that the free parameter $a_4$ does not appear in the coefficients of either of the two partial functions. Thus as we vary $a_2$ and $a_4$, we only get a 1-dimensional family of principal blow-up functions. Along with the translates, we see that this two-dimensional set $\cF_2$ of $\partial B$ gives not a 3-dimensional, but rather 2-dimensional, family of blow-up functions.

\end{ex}

\begin{ex}\textbf{Injectivity of free parameters to coefficients of partial functions}\label{ex:injection}

Consider the positive face of the unit sphere given by $\norm{\pi_3(p)}_{V_3} = 1$. Intersected with the positive orthant, we have the 3-dimensional set of points $\cF =\{(a_1,a_2,a_3,1)\mid 0< a_1, a_2, a_3 <1\}$. All of $\cF$ is contained in the interior of $\Omega_4$, and so the principal blow-up of the homogeneous sup norm here is linear. From Table~\ref{table:pD}, we see that at the point $p = (a_1, a_2, a_3, 1)$, we get the principal blow-up
\[
h(x) = \left(-\frac{a_3}{6} - \frac{a_1a_2}{36}\right) x_1 + \frac{a_1^2}{36}x_2.
\]
While we have three free parameters, as we let $a_1$, $a_2$, and $a_3$ vary, we only get a two-dimensional family of principal blow-up functions. This example helps us see that if we would like the family of principal blow-ups to have dimension equal to $\dim \cF$, we need an injection between the set of free parameters in the family $\cF$ and the set of coefficients $G$, as described in Observation~\ref{obs:injection}. 

Since these principal blow-up functions are linear, there are no translates. Thus from this 3-dimensional face of $\partial B$, we get a 2-dimensional family of blow-up functions.
\end{ex}

\subsection{Upper bounds on the dimension of $\bhor(L_n)$}\label{sec-upperbounds}

\begin{lemma}\label{lem:n-1}
For $n\geq 3$, the horofunction boundary of $L_n$ equipped with a polysmooth layered sup norm has dimension less than or equal to $n-1$.
\end{lemma}

\begin{proof}

To prove this lemma, we will show that no face of the unit sphere $\partial B$ can have a family of blow-up functions of dimension greater than $n-1$.

Suppose that we have a face $\cF_1$ along which all principal blow-ups are linear functions. Then for each point $p \in \cF_1$, the set of blow-ups $\BU(p)$ is a single function (with no translates). In $L_n$, the unit metric ball has no faces with dimension greater than $n-1$, and so $\dim \BU(\cF_1) \leq n-1$.

Next, suppose that we have a face or subset of a face $\cF_2$ for which the principal blow-up at each point is made up of two partial functions. Then for each $p \in \cF_2$, we have that $\BU(p)$ is 1-dimensional since we have a one-dimensional family of translates of the principal blow-up. It remains to be shown that such a face $\cF_2$ can have dimension no greater than $n-2$. We consider three cases for points $p= (a_1, \ldots, a_n) \in \cF_2$. 
\begin{itemize}
\item \underline{Case 1:} Suppose that points $p \in \cF_2$ satisfy $\norm{\pi_1(p)}_{V_1} = 1$, that $\norm{\cdot}_{V_1}$ is polygonal, and that $\pi_1(p)$ is a vertex of the polygonal norm. Since $\pi_1(p)$ is a vertex, we have $\pi_1(p) = (a_1, a_2)$ is fixed, meaning we have maximum $(n-2)$ free parameters in $\cF_2$.

\item \underline{Case 2:} Suppose that points $p \in \cF_2$ satisfy $\norm{\pi_1(p)}_{V_1} = 1$, and that either $\norm{\cdot}_{V_1}$ is smooth or that $\norm{\cdot}_{V_1}$ is polygonal and the points $\pi_1(p)$ lie along an edge of the polygon. In this case, we have one free parameter in the first layer $V_1$, and the blow-up corresponding to the first layer is linear. Since the principal blow-up at $p$ is made up of two partial functions, it must be the case that $\norm{\pi_i(p)}_{V_i} = 1$ for another index $i \in \{2, \ldots, n-1\}$. Thus we have only $(n-2)$ free parameters in $\cF_2$.

\item \underline{Case 3:} Otherwise, the principal blow-up at $p \in \cF_2$ has two partial functions if $\norm{\pi_i(p)}_{V_i} = 1$ for two different indices in $\{2, \ldots, n-1\}.$ Thus two coordinates are fixed, and there are only $(n-2)$ free parameters in $\cF_2$. 
\end{itemize}

Finally, suppose we have a face or subset of a face $\cF_3$ for which the principal blow-up at each point is made up of at least three partial functions. Then for each $p \in \cF_3$, we have that $\BU(p)$ is 2-dimensional since we have a two-dimensional family of translates of the principal blow-up. It remains to be shown that such a face $\cF_3$ can have dimension no greater than $n-3$. We similarly consider three cases for points $p= (a_1, \ldots, a_n) \in \cF_3$.
\begin{itemize}
\item \underline{Case 1:} Suppose that points $p \in \cF_3$ satisfy $\norm{\pi_1(p)}_{V_1} = 1$, that $\norm{\cdot}_{V_1}$ is polygonal, and that $\pi_1(p)$ is a vertex of the polygonal norm. Then for the principal blow-up at $p$ to have at least three partial functions, we need $\norm{\pi_i(p)}_{V_i} = 1$ for at least one other index $i \in \{2, \ldots, n-1\}$, leaving us with no more than $(n-3)$ free parameters.

\item \underline{Case 2:} Suppose that points $p \in \cF_3$ satisfy $\norm{\pi_1(p)}_{V_1} = 1$, and that either $\norm{\cdot}_{V_1}$ is smooth or that $\norm{\cdot}_{V_1}$ is polygonal and the points $\pi_1(p)$ lie along an edge of the polygon. In this case, we have one free parameter in the first layer $V_1$, and the blow-up corresponding to the first layer is linear. Since the principal blow-up at $p$ is made up of three partial functions, it must be the case that $\norm{\pi_i(p)}_{V_i} = 1$ for at least two indices $i \in \{2, \ldots, n-1\}$. Thus we have no more than $(n-3)$ free parameters in $\cF_3$.

\item \underline{Case 3:} Otherwise the principal blow-up at $p \in \cF_3$ has at least partial functions if $\norm{\pi_i(p)}_{V_i} = 1$ for at least three different indices $i \in \{2, \ldots, n\}.$ Thus there are no more than $(n-3)$ free parameters in $\cF_3$. 
\end{itemize}
\end{proof}

\begin{lemma}
For $n\geq 3$, the horofunction boundary of $L_n$ equipped with a polysmooth layered sup norm has dimension less than or equal to $\lceil\frac{ n }{2}\rceil + 2$.
\label{lemma:otherBound}
\end{lemma}

\begin{proof}

To show that the horofunction boundary of $L_n$ is has dimension less than or equal to $\lceil\frac{ n }{2}\rceil + 2$, it suffices to show that for any face $\cF$ of any dimension in the unit metric sphere of the polysmooth layered sup norm, the family of blow-ups $\BU(\cF)$ has dimension bounded by $\lceil\frac{ n }{2}\rceil + 2$.

Suppose that $\cF$ satisfies $\norm{\pi_i(p)}_{V_i} = 1$ for all $i \in I \subseteq\{1, \ldots, n-1\}$, where the size of the index set $|I|$ is $k>0$. Given that $\norm{\cdot}_{V_1}$ could be polygonal or smooth, as in the proof of Lemma~\ref{lem:n-1}, there are a few cases to consider.

\begin{itemize}
\item \underline{Case 1:} Suppose that points $p= (a_1, \ldots, a_n) \in \cF$ satisfy $\norm{\pi_1(p)}_{V_1} = 1$, that $\norm{\cdot}_{V_1}$ is polygonal, and that $\pi_1(p)$ is a fixed vertex of the polygonal norm. Then since $|I| = k$, the points in $\cF$ have $k+1$ fixed coordinates, meaning the face is $m = (n-k-1)$-dimensional. The principal blow-ups at these points will be made up of $\ell = k+1$ partial functions.

\item \underline{Case 2:} Combining Cases 2 and 3 from the proof of the previous lemma, suppose that i) $I \subseteq \{2, \ldots, n-1\}$; or ii) points $p= (a_1, \ldots, a_n) \in \cF$ satsify $\norm{\pi_1(p)}_{V_1} = 1$, and that either $\norm{\cdot}_{V_1}$ is smooth or that $\norm{\cdot}_{V_1}$ is polygonal and the points $\pi_1(p)$ lie along an edge of the polygon. In this case, the points in $\cF$ have $k$ fixed coordinates, meaning the face is $m = (n-k)$-dimensional. The principal blow-ups at these points will be made up of $\ell = k$ partial functions.
\end{itemize}

Regardless of which case we are in, we have $m + \ell = n$, where $m$ is the number of free parameters in the face $\cF$ and $\ell$ is the number of partial functions in the principal blow-up at a point $p \in \cF$.
Let $\cP$ be list of Pansu derivatives which appear as partial functions of the principal blow-up at points $p \in \cF$. The list $\cP$ will consist of the Pansu derivatives of the homogeneous coordinate functions in Table~\ref{table:pD} for all $i \in I$, $i \geq 2$. If $1 \in I$, then depending on $\norm{\cdot}_{V_1}$, the list $\cP$ will contain one or two other linear functions in the variables $x_1$ and $x_2$ with coefficients which depend on $a_1$ and $a_2$ \cite{schilling-thesis}.
If we can define an injective map between the $m$ free parameters and the set $G$ of polynomial coefficients of $x_1$ or $x_2$ from the list $\cP$ (such that each free parameter $a_i$ is sent to a coefficient it appears in nontrivially), then we will have an $m$-dimension family of principal blow-ups associated to $\cF$.

We note, however, that the coefficients of $x_2$ in the Pansu derivatives of the coordinate functions seen in Table~\ref{table:pD} are always functions solely of $a_1$. This obstacle means no free parameter $a_i$, $i\neq 1$ can be mapped to a coefficient of $x_2$. Hence the dimension of the family of principal blow-ups along the face $\cF$ (where each principal blow-up has $\ell$ partial functions) is bounded above by $\ell+1$. To achieve this maximal dimension, we would need the number of free parameters $m$ to be at least $\ell + 1$, and so $(\ell + 1) + \ell \leq m + \ell = n$.

Thus to get an $(\ell+1)$-dimensional family of principal blow-ups (before considering translates), we need $\ell \leq \frac{n-1}{2}$. As long as $\ell \geq 3$, there will be a 2-dimensional family of translates for each principal blow-up, and hence the whole family of blow-ups $\BU(\cF)$ has dimension bounded by 
\[
(\ell + 1) + 2 \leq \left\lfloor\frac{n-1}{2}\right\rfloor + 3 = \left\lceil \frac{n}2\right\rceil + 2. \qedhere
\]
\end{proof}

\subsection{Proof of Theorem~C}\label{sec-thmC}

Given the upper bounds on the dimension of $\bhor(L_n)$ given above, we note that for $3 \leq n \leq 7$, we have that $(n-1) \leq \lceil\frac{ n }{2}\rceil + 2$. For $n > 7$, we have $\lceil\frac{ n }{2}\rceil + 2 < n-1$. To prove Theorem C, therefore, it suffices to find families of horofunctions of the corresponding dimensions in $\bhor(L_n)$ for each $n$.

\begin{lemma}\label{lem:fill}
Let $L_n$ be equipped with a polysmooth layered sup norm. There exist families of horofunctions in $\bhor(L_n)$ which are of dimension $\begin{cases}
n-1, & 3\leq n \leq 7\\
\lceil\frac{ n }{2}\rceil + 2, & n>7
\end{cases}$.
\end{lemma}

\begin{proof}

In Table~\ref{table:faces}, we provide a face $\cF_n$ of dimension $\lceil \frac{n}{2}\rceil$ of the unit sphere $\partial B$ for each $n \geq 3$. Note that we never have $\norm{\pi_1(p)}_{V_1} = 1$, and so we need not be concerned with whether $\norm{\cdot}_{V_1}$ is polyhedral or smooth since the Pansu derivatives associated to the first layer will not appear in our blow-ups. Thus each of the Pansu derivatives that appear as partial functions of the principal blow-ups at points $p \in \cF$ will be the Pansu derivatives of the homogeneous coordinate functions for $i \geq 3$, given in Table~\ref{table:pD}.

\begin{table}[H]
\centering
$
\begin{array}{|c|c|}
\hline
n & \text{Face to be considered}\\
\hline
3 & \{(a_1,a_2, 1): \norm{(a_1,a_2)}_{V_1}< 1\}\\
4 & \{(a_1, a_2, 1, 1): \norm{(a_1,a_2)}_{V_1}< 1\}\\
5  & \{(a_1, a_2, 1, a_4, 1): \norm{(a_1,a_2)}_{V_1}< 1, \norm{a_4}_{V_3} < 1\}\\
6 & \{(a_1, a_2, 1, a_4, 1, 1): \norm{(a_1,a_2)}_{V_1}< 1, \norm{a_4}_{V_3} < 1\}\\
n = 2\ell +1 , \: \ell\geq 3 &  \{(a_1, a_2, 1, a_4, 1, \ldots, a_{n-3}, 1, a_{n-1},1): \norm{(a_1,a_2)}_{V_1}< 1, \norm{a_{2i}}_{V_{2i-1}} < 1, 2 \leq i \leq \ell\}\\
n = 2(\ell+1), \: \ell\geq 3 & \{(a_1, a_2, 1, a_4, 1, \ldots, a_{n-4}, 1, a_{n-2},1,1): \norm{(a_1,a_2)}_{V_1}< 1, \norm{a_{2i}}_{V_{2i-1}} < 1, 2 \leq i \leq \ell\}\\
\hline
\end{array}$
\caption{A face of the unit metric sphere in $L_n$ which gives an $(n-1)$-dimensional family of blow-ups.}
\label{table:faces}
\end{table}

For each $n$, let $J_n \subseteq \{3,\ldots, n\}$ be nonempty subset of indices such that $\cF_n$ is defined by $\{x_j = 1 \mid j \in J\}$. Let $\cP_n$ be the list of Pansu derivatives $\cP_n = \{\pD |x_j|^{1/\nu_j} \mid j\in J_n\}$ which appear as partial functions in the principal blow-up at points $p \in \cF_n$.

We note that for each face $\cF_n$ in Table~\ref{table:faces}, we can construct an injective map from the set of free parameters of that face to the set $G$ of polynomial coefficients of $x_1$ or $x_2$ in $\mathcal P_n$ such that each free parameter is sent to a coefficient in which it appears nontrivially. Thus the family of principal blow-ups along $\cF_n$ has dimension equal to the number of free parameters, namely $\lceil \frac{n}{2}\rceil$.

Indeed, for all $n \geq 3$, we have $3 \in J_n$, meaning for all $p \in \cF_n$, the Pansu derivative $\frac{1}{2}\left[-\frac12a_2x_1 + \frac12a_1x_2\right]$ appears as the principal blow-up at $p$ (for $n=3$) or as a partial function of the principal blow-up at $p$ (for $n > 3$). We can map the free parameters $a_1$ and $a_2$ to the coefficients of $x_2$ and $x_1$, respectively. 
Next, for all $n$, all other free parameters are of even index. In Table~\ref{table:faces}, for each even index $k$ such that $a_k$ is a free parameter, we have $a_{k+1}$ fixed at $1$. Thus $k+1 \in J_n$, and since $a_k$ appears in the coefficient of $x_1$ in the Pansu derivative of $|x_{k+1}|^{1/\nu_{k+1}}$, we can map $a_k$ there. Thus the family of principal blow-ups along $\cF_n$ has dimension $\lceil \frac{n}{2}\rceil$.

For $n=3$, the family of principal blow-ups along $\cF_3$ has dimension $\lceil \frac{3}{2}\rceil = 2$. We note that $|J_3| = 1$, and so our principal blow-ups are linear. There are no translates of these linear blow-ups, and so $\dim \BU(\cF_3) = 2$.

For $n=4$, the family of principal blow-ups along $\cF_4$ has dimension $\lceil \frac{4}{2}\rceil = 2$. We note that $|J_4| = 2$, and so each principal blow-up at $p \in \cF_4$ has two partial functions, one coming from the Pansu derivative of $\sqrt{|x_3|}$ and one coming from the Pansu derivative of $\sqrt[3]{|x_4|}$. Thus each principal blow-up has a 1-parameter family of translates, meaning $\dim \BU (\cF_4) = 3$.

For $n = 5$, the family of principal blow-ups along $\cF_5$ has dimension $\lceil \frac{5}{2}\rceil = 3$. Again we have $|J_5| = 2$, and so each principal blow-up at $p \in \cF_5$ has two partial functions. Thus each principal blow-up has a 1-parameter family of translates, meaning $\dim \BU (\cF_5) = 4$.

For $n \geq 6$, the family of principal blow-ups along $\cF_n$ has dimension $\lceil \frac{n}{2}\rceil$. We note that $|J_n| \geq 3$, and so each principal blow-up has at least 3 partial functions. Thus each principal blow-up has a 2-parameter family of translates, meaning $\dim \BU (\cF_n) = \lceil\frac{n}{2}\rceil +2$. For $n =6$ or $7$, this is equal to $n-1$, and for $n > 7$, this is strictly less than $n-1$.
\end{proof}

This finding begs for further exploration. It would be interesting to understand and classify which Carnot groups $G$ in general have horofunction boundaries of dimension $\dim G -1$. While higher Heisenberg groups of arbitrary dimension have follow our expectations, once we have higher nilpotency class, we see a change. Interestingly, the threshold between dimensions 7 and 8 which appears in the horofunction boundary of filiform Lie groups also appears in the classification of Carnot gradings on nilpotent Lie algebras \cite{cornulier-gradings}.

\begin{question}
For other infinite families of Carnot groups, is there a dimensional threshold at which the horofunction boundary ceases to be of the expected dimension? Is this threshold a function of the nilpotency class, the dimension of the first layer in the grading, and/or other properties of the group?
\end{question}

\bibliography{refs}

\end{document}